\newtheorem{theorem}{Theorem}[section]
\newtheorem{lemma}[theorem]{Lemma}
\newtheorem{proposition}[theorem]{Proposition}
\theoremstyle{definition}
\newtheorem{definition}[theorem]{Definition}
\theoremstyle{remark}
\newtheorem{remark}[theorem]{Remark}
\numberwithin{equation}{section}
\def\bu{\boldsymbol{u}}
\def\d{\operatorname{d}}
\def\v{\boldsymbol{v}}
\def \w {\boldsymbol{w}}
\def\vh{\boldsymbol{v}_h}
\def\div{\operatorname{div}}
\def\curl{\operatorname{curl}}
\def \nf{\boldsymbol{n}_f}
\def\wh{\boldsymbol{w}_h}
\def \Hgd{\boldsymbol{H}_0(\operatorname{grad-div};\Omega)}
\def \Vrk{\boldsymbol{V}_{r-1, k+1}(K)}
\begin{document}
	\begin{center}
		\begin{LARGE}
			{The grad-div conforming virtual element method for the quad-div problem in three dimensions}
		\end{LARGE}
	\end{center}
	
	\title{}

	\author{Xiaojing Dong$^1$}

	\address{$^1$Hunan Key Laboratory for Computation and Simulation in Science and Engineering, Key Laboratory of Intelligent Computing \& Information Processing of Ministry of Education, School of Mathematics and Computational Science, Xiangtan University, Er huan Road, Xiangtan,  411105, Hunan, P.R. China}

	\email{dongxiaojing99@xtu.edu.cn}

	\thanks{The research was supported by the National Natural Science Foundation of China (Nos: 12071404, 12571393, 12261131501), National Natural Science Foundation of China Key Program (No: 12431014), Young Elite Scientist Sponsorship Program by CAST (No: 2020QNRC001), the Science and Technology Innovation Program of Hunan Province (No: 2024RC3158), Postgraduate Scientific Research Innovation Project of Hunan Province (No: LXBZZ2024112), the Project of Scientific Research Fund of the Hunan Provincial Science and Technology Department (No.2023GK2029, No.2024ZL5017), and Program for Science and Technology Innovative Research Team in Higher Educational Institutions of Hunan Province of China.}

	\author{Yibing Han$^{1\dagger}$}
	\thanks{$^\dagger$ Corresponding author.}
	
	\email{202331510114@smail.xtu.edu.cn}
	
	\author{Yunqing Huang$^1$}
	\email{huangyq@xtu.edu.cn}
	
	\subjclass[2020]{Primary 65N30, 65N15}
	
	\begin{abstract}
		We propose a new stable variational formulation for the quad-div problem in three dimensions and prove its well-posedness. 
		Using this weak form, we develop and analyze the $\boldsymbol{H}(\operatorname{grad-div})$-conforming virtual element method of arbitrary approximation orders on polyhedral meshes. 
		Three families of $\boldsymbol{H}(\operatorname{grad-div})$-conforming virtual elements are constructed based on the structure of a de Rham sub-complex with enhanced smoothness, resulting in an exact discrete virtual element complex.
		In the lowest-order case, the simplest element has only one degree of freedom at each vertex and face, respectively. 
		The rigorous analysis includes interpolation error estimates,  stability of the discrete bilinear forms, well-posedness of the discrete formulation, and optimal convergence rates.  Some numerical examples are shown to verify the theoretical results.
	\end{abstract}
	
	\keywords{$\boldsymbol{H}(\operatorname*{grad-div})$-conforming; virtual elements; quad-div problem; variational formulation;  de Rham complex; polyhedral meshes}

	\maketitle

\section{Introduction}
\label{intro}
Let $\Omega \subset \mathbb{R}^3$ be a contractible Lipschitz polyhedron with boundary $\Gamma$ and unit outward normal $\boldsymbol{n}$.
We consider the following quad-div problem: find $\bu$ such that
\begin{equation}\label{primalForm}
	\begin{aligned}
		(\nabla \div)^2 \boldsymbol{u} &=\boldsymbol{f} \quad\text {in } \Omega, \\
		\operatorname{curl} \,  \boldsymbol{u} &=0 \quad\text{in } \Omega, \\
		\boldsymbol{u} \cdot \boldsymbol{n} &=0 \quad\text{on } \Gamma, \\
		\div \boldsymbol{u}  &=0 \quad\text {on } \Gamma,
	\end{aligned}
\end{equation}
where $\boldsymbol{f}$ is a curl-free vector field.
The quad-div operator arises in linear elasticity \cite{Altan1992,Mindlin1964,Mindlin1965}, with $\boldsymbol{u}$ representing the displacement field of an elastic body and the integral of $(\nabla \operatorname{div})^2$ over the domain corresponding to the shear strain energy.
This operator can be expressed as $(\operatorname{div}^* \circ \operatorname{div})^* \circ (\operatorname{div}^* \circ \operatorname{div})$, a form it shares with other fundamental fourth-order operators such as the biharmonic $\Delta^2$ and the quad-curl $\operatorname{curl}^4$, all belonging to the class $(D^*\circ D)^* \circ (D^* \circ D)$. It is the dual of the biharmonic operator and, in two dimensions, shares properties similar to those of the quad-curl operator. While the biharmonic and quad-curl operators have been extensively studied \cite{QuadCurlHodge,QuadCurlXuLiWei,Bihar2002,ZZMQuadCurl,HuangXHQuadCurl,WGQuadCurl,ChenHuangZhang2025}, research on the quad-div operator remains limited \cite{QuadDiv2018,QuadDiv2022,ZZMQuadDiv}.
\par
The discrete de Rham complex plays a critical role in the design and analysis of finite element methods (FEMs), offering a structured framework for constructing finite element spaces that maintain compatibility with differential operators; see \cite{Arnold2018,ZZMQuadCurl,ZZMQuadDiv,HuangXHQuadCurl,Chen2024,ChenHuangZhang2025}. 
We introduce the following de Rham complex with enhanced smoothness and homogeneous boundary conditions:
\begin{equation}\label{ContiComplex}
	0 \stackrel{\subset}{\longrightarrow} H_0^{1}(\Omega) \stackrel{\nabla}{\longrightarrow} \boldsymbol{H}_0(\operatorname{curl};\Omega) \stackrel{\operatorname{curl}}{\longrightarrow}\boldsymbol{V}_0(\Omega) \stackrel{\operatorname{div}}{\longrightarrow} H_0^1(\Omega)\cap L^2_0(\Omega) {\longrightarrow} 0,
\end{equation}
where
$$\boldsymbol{V}_0( \Omega) := \{\boldsymbol{v} \in \boldsymbol{L}^2(\Omega) : \div \boldsymbol{v} \in H_0^1(\Omega) \text{ and } \v \cdot \boldsymbol{n}=0 \text{ on } \Gamma\}$$ 
denotes the homogeneous $\boldsymbol{H}(\operatorname{grad-div})$-conforming space. On contractible Lipschitz domains, the exactness of \eqref{ContiComplex} follows from the standard results in \cite{Arnold2018}.
\par 
The work of \cite{ZZMQuadDiv} exploits the framework \eqref{ContiComplex} to develop a conforming finite element discretization for the quad-div operator on $\boldsymbol{V}_0(\Omega)$. In that work, the authors also analyze the following problem:
\begin{equation}\label{quaddiv1}
	(\nabla\div)^2\bu +\bu =\boldsymbol{f}, \quad \curl \bu =\boldsymbol{g} \quad \text{in } \Omega,
\end{equation}
subject to the boundary conditions in \eqref{primalForm}, for given fields $\boldsymbol{f} \in \boldsymbol{H}(\curl;\Omega)$ and $\boldsymbol{g} \in \boldsymbol{L}^2(\Omega)$ where $\boldsymbol{g} = \curl \boldsymbol{f}$.
The corresponding weak formulation reads: find $\boldsymbol{u} \in \boldsymbol{V}_0(\Omega)$ such that
\begin{equation}\label{addu}
	(\nabla\nabla\cdot \bu ,\nabla\nabla\cdot\v) +(\bu,\v)=( \boldsymbol{f},\boldsymbol{v}), \quad \v \in \boldsymbol{V}_0(\Omega).
\end{equation}
To the best of our knowledge, a weak formulation for the problem \eqref{primalForm} remains unestablished when the curl-free source field $\boldsymbol{f}$ belongs to the dual of $\boldsymbol{V}_0(\Omega)$. The primary difficulty arises from the need to treat the kernels of both the divergence and the curl operator within the variational framework. In contrast,
this difficulty does not appear in the problem \eqref{addu} due to the presence of stabilization $(\bu,\v)$.
For the more regular case where the curl‑free field  $\boldsymbol{f}$ lies in the dual of $\boldsymbol{H}_0(\div;\Omega)$, a reduced-order variational formulation for \eqref{primalForm} has been investigated in \cite{QuadDiv2018}. 
\par 
Building upon the generalized Helmholtz decomposition \cite{ChenLong2018}, we characterize the dual of $\boldsymbol{V}_0(\Omega)$, which clarifies the admissible space for the source term $\boldsymbol{f}$.
Furthermore, we use the complex \eqref{ContiComplex} to formulate a new, equivalent weak form for \eqref{primalForm}, which is stabilized by introducing two Lagrange multipliers. As detailed in Section 3,  its well-posedness is proven via the Friedrichs inequality applied to divergence-free and curl-free spaces. Using a commutative diagram involving the associated trace operator, we define an appropriate trace space to extend \eqref{primalForm} to non-homogeneous boundary conditions. A crucial observation is that the divergence of the solution to the quad-div problem coincides with the unique solution of a Poisson problem subject to an integral constraint.
\par
The virtual element method (VEM) \cite{VEM2013} is a numerical technique for the approximation of partial differential equations (PDEs) that extends the finite element method (FEM) to general polygonal and polyhedral meshes.  
It combines the conformity of the finite element method with enhanced flexibility and simpler design. This is achieved by defining basis functions as solutions to local boundary value problems, without requiring explicit polynomial representations while simultaneously facilitating straightforward higher-order extensions \cite{VEM2013,VEMfordivcurl,Antonietti2014,VEMforStokes,VEMforMaxwell,ZJQDivFree,ZJQQuadCurl}. Our analysis of the non-homogeneous continuous problem leads to a new $\boldsymbol{H}(\operatorname{grad-div})$-conforming virtual element space, defined through the corresponding local boundary value problem, which paves the way for a conforming discretization.
\par 
In this paper, we construct the following discrete complex with integers $r,k\ge1$:
\begin{equation}\label{IntroDiscomplex}
	\mathbb{R} \stackrel{\subset}{\longrightarrow} U_{1}(\Omega) \stackrel{\nabla}{\longrightarrow} \boldsymbol{\Sigma}_{0,r}( \Omega) \stackrel{\curl}{\longrightarrow} \boldsymbol{V}_{r-1,k+1}( \Omega) \stackrel{\operatorname{div}}{\longrightarrow} W_k(\Omega) {\longrightarrow} 0.
\end{equation}
We start by introducing the subspaces $U_1(\Omega)$ and $W_k(\Omega)$ of  $H_1(\Omega)$ in Sec. 4.1. Subsequently, we introduce the $\boldsymbol{H}(\operatorname*{curl})$-conforming virtual element spaces $\boldsymbol{\Sigma}_{0,r}(\Omega)$ in Sec. 4.2.
Finally, taking $r=k$, $r=k+1$, $r=k+2$, we construct three families of $\boldsymbol{H}(\operatorname*{grad-div}$)-conforming virtual element spaces $\boldsymbol{V}_{r-1,k+1}(\Omega)$ in Sec. 4.3.
These families extend the $\boldsymbol{H}(\operatorname{grad-div})$-conforming finite elements introduced in \cite{ZZMQuadDiv} to general polyhedral meshes. In the lowest-order case ($r=k=1$), the construction reduces to the simplest finite element structure on simplicial and cuboid meshes, where the degrees of freedom are associated only with vertices and faces. 
Furthermore, in Sec. 4.4, we establish the exactness of the sequence from $\boldsymbol{V}_{r-1,k+1}(\Omega)$ to $\boldsymbol{W}_k(\Omega)$ by identifying the divergence of the quad-div solution with the unique solution of the corresponding nonhomogeneous Poisson problem subject to an integral constraint. Additionally, we define appropriate interpolation operators to obtain a commutative diagram connecting the discrete complex \eqref{IntroDiscomplex} with the continuous one.
\par 
Taking $r=k$ as an example, we provide the proof of the interpolation estimates and the stability of the discrete bilinear form defined on $\boldsymbol{V}_{k-1,k+1}(\Omega)$ in Sec. 4.5.
The remaining cases, such as $r=k+1$ or $r=k+2$, can be obtained similarly. 
Based on the above preparations, we present the discrete bilinear forms and derive their stability in Sec. 5.1.
Then we give the discrete formulation for the quad-div problem in Sec. 5.2.
Using discrete Friedrichs inequalities (see Lemma \ref{lemDisFre}), we prove the existence and uniqueness of the solution for the discrete system.
We show the optimal error estimates in Sec. 6. 
Finally, we display some numerical examples to verify the theoretical results.
\section{Notation}
Let $\{\mathcal{T}_h\}_h$ be a family decomposition of $\Omega $ into non-overlapping polyhedral elements $K$ with mesh size parameter $h$. 
For each element $K \in \mathcal{T}_h$, denote $\partial K$ by its boundary with unit outward normal $\boldsymbol{n}_{\partial K}$. For a face $f$ of $K$, let 
$\boldsymbol{n}_f$ be the restriction of $\boldsymbol{n}_{\partial K}$ to $f$, and for an edge $e$, let $\boldsymbol{t}_e$ be the unit tangential vector along $e$.
Given any geometric entity $G$ (element,  face, or edge), we denote by $h_G, |G|$ and $\boldsymbol{b}_G$ its diameter, measure and barycenter, respectively.
For all $ h:=\operatorname{max}_{K\in \mathcal{T}_h}\{h_K\}$,  given a uniform positive constant $\mu$,  we assume that every $K\in \mathcal{T}_h$ satisfies the following standard regularity assumptions \cite{VEMforGener,VEMforStokes,VEMforMaxwell}: \par                                                  
$\boldsymbol{(A1)}\, K$ is star-shaped with respect to a ball of radius $\ge\mu h_K$, \par
$\boldsymbol{(A2)}\, $every face $f$ of $K$ is star-shaped with respect to a disk  of radius $\ge \mu h_K$, \par
$\boldsymbol{(A3)}\, $every edge $e$ of $K$ and every face of $K$ satisfy $h_e\ge \mu h_f \ge \mu^2 h_K.$ \\
Throughout this paper,  we use the symbols $ \lesssim $ and $ \gtrsim $ to denote the upper and lower bounds up to a generic positive constant that is
independent of the discretization  parameters $h$, respectively.
\par
For a subdomain $G \subset \Omega$ with unit outward normal $\boldsymbol{n}_{\partial G}$ and a nonnegative real number $s$, the standard Sobolev spaces $H^s(G)$ and $H^s_0(G)$ are endowed with the norm $\|\cdot\|_{s,G}$ and seminorm $|\cdot|_{s,G}$. We denote
$H^0(G)$ by $L^2(G)$ equipped with the norm $\|\cdot \|_G $ and the inner product $(\cdot,  \cdot )_G$. The duality pairing is denoted by $\langle \cdot, \cdot \rangle_G$, and the corresponding dual space $H^{-s}(G)$ is defined, equipped with the norm $\|\cdot\|_{-s, G}$.
If $G=\Omega$,  we omit the subscript $G$. For $k \in \mathbb{N}_0$, we denote by $P_k(G)$ the space of polynomials of degree at most $k$ on $G$, with the convention $P_{-1}(G) = {0}$.  For $m < k$, define $\widehat{P}_{k/m}(G)$ to be the subspace of $P_k(G)$ spanned by monomials of degree greater than $m$.
We shall use $\boldsymbol{H}^s(G),  \boldsymbol{H}_0^s(G)$, $\boldsymbol{L}^2(G)$ and $\boldsymbol{P}_k(G)$ to denote the vector-valued Sobolev spaces $[H^s(G)]^3,  [H^s_0(G)]^3$,  $ [L^2(G)]^3$ and $[P_k(G)]^3$, respectively.
In a slight abuse of notation, we denote the gradient, Laplacian, curl, and divergence operators by $\nabla$, $\Delta$, $\operatorname{curl}$, and $\operatorname{div}$, respectively, while the Sobolev spaces $\boldsymbol{H}(\operatorname{curl};G)$, $\boldsymbol{H}_0(\operatorname{curl};G)$, $\boldsymbol{H}(\operatorname{div};G)$, and $\boldsymbol{H}_0(\operatorname{div};G)$ retain their standard definitions. We define the following function spaces: 
\begin{equation*}
	\begin{aligned}
		\boldsymbol{V}(G)&:=\{\boldsymbol{v}\in \boldsymbol{L}^2(G):\,  \div\boldsymbol{v}\in H^1(G)\}, \\
		\boldsymbol{V}_0(G)&:=\{ \boldsymbol{v}\in \boldsymbol{V}(G):\,  \boldsymbol{v}\cdot \boldsymbol{n}_{\partial G}=0 \text{ and } \div \boldsymbol{v} =0 
		\text{ on } \partial G \},
	\end{aligned}
\end{equation*}
equipped with the graph norm
\begin{align*}
	\|\boldsymbol{v}\|^2_{\boldsymbol{V}(G)}=\|\boldsymbol{v}\|^2_G+\|\operatorname{div}\boldsymbol{v}\|_{1,G}^2
\end{align*}
Note that $\boldsymbol{V}_0(G)$ coincides with the closure of $\boldsymbol{C}_0^{\infty}(G)$ with respect to the norm $\|\cdot\|_{\boldsymbol{V}(G)}$, which can be established by a standard density argument analogous to the proof of \cite[Theorem 2.3]{QuadCurlXuLiWei}.
The spaces of divergence-free and curl-free vector fields are defined by
\begin{equation*}
	\begin{aligned}
		\boldsymbol{H}(\operatorname{div}^0;\Omega)&:=\{\boldsymbol{v}\in \boldsymbol{L}^2(\Omega):  (\boldsymbol{v}, \nabla q)= 0,   \forall q \in H^1_0(\Omega)\}, \\
		\boldsymbol{H}(\operatorname{curl}^0;\Omega)&:=\{\boldsymbol{v}\in \boldsymbol{L}^2(\Omega):  (\boldsymbol{v}, \operatorname{curl} \boldsymbol{\phi})=0,  \forall \boldsymbol{\phi}\in \boldsymbol{H}_0(\operatorname{curl};\Omega)\}.
	\end{aligned}
\end{equation*}
Recall the $L^2$-orthogonal Helmholtz-Hodge decomposition \cite{Arnold2018}
\begin{align}\label{decom1}
	\boldsymbol{H}_0(\operatorname{curl};\Omega)&= \nabla H_0^1(\Omega) \oplus^\bot  \boldsymbol{X}(\Omega),  \\
	\boldsymbol{V}_0( \Omega)&= \operatorname{curl}\boldsymbol{H}_0(\operatorname{curl};\Omega)\oplus^\bot  \boldsymbol{Y}(\Omega),\label{decom2}
\end{align} 
where 
\begin{equation*}
	\begin{aligned}
		\boldsymbol{X}(\Omega)& = \boldsymbol{H}_0(\operatorname{curl};\Omega)\cap \boldsymbol{H}(\operatorname{div}^0;\Omega), \\
		\boldsymbol{Y}(\Omega)&=\boldsymbol{V}_0(\Omega)\cap \boldsymbol{H}(\operatorname{curl}^0;\Omega).
	\end{aligned}
\end{equation*}
We introduce the following result, based on the 
imbedding theory  \cite[Proposition 3.7]{Amrouche1998} and the norm equivalence \cite[Corollary 3.16 and 3.19]{Amrouche1998}.
\begin{lemma}
	If $\Omega$ is a contractible Lipschitz polyhedron,
	there exists $s>\frac{1}{2}$ such that  
	\begin{equation*}
		\begin{aligned}
			\|\boldsymbol{v}\|_s \lesssim    \|\operatorname{curl} \boldsymbol{v}\|+\|\operatorname{div}\boldsymbol{v}\|,  \quad \forall \boldsymbol{v} \in \boldsymbol{H}(\operatorname{curl};\Omega)\cap \boldsymbol{H}_0(\operatorname{div};\Omega) \\
			\text{ or }\boldsymbol{H}_0(\operatorname{curl};\Omega)\cap \boldsymbol{H}(\operatorname{div};\Omega).
		\end{aligned}
	\end{equation*}
	In particular, for divergence-free or curl-free fields,  the Friedrichs inequalities hold:
	\begin{align}
		\|\boldsymbol{v}\|_s &\lesssim  \|\operatorname{curl} \boldsymbol{v}\|,  \quad \forall \boldsymbol{v}\in \boldsymbol{X}(\Omega),   \label{Frieq2} \\
		\|\boldsymbol{v}\|_s &\lesssim  \|\operatorname{div} \boldsymbol{v}\|, \quad 
		\forall \boldsymbol{v} \in \boldsymbol{Y}(\Omega).  \label{Frieq3}
	\end{align}
	The regularity exponent $s$ typically satisfies $s = \frac{1}{2} $ for general Lipschitz domains, and improves to $s = 1$ when $\Omega$ is convex.
\end{lemma}
\par 
The following lemma characterizes the dual space  $\boldsymbol{V}'(\Omega)$ of $\boldsymbol{V}_0(\Omega)$. Its proof, which relies on \cite{Chen2018}, is provided in Appendix \ref{sec:appendixA}.
\begin{lemma}\label{DecLemma}
	The stable decomposition holds
	\begin{equation}\label{DualDec}
		\boldsymbol{V}'(\Omega)= \boldsymbol{H}^{-2}(\curl;\Omega)=\nabla H^{-1}(\Omega)\oplus \curl\boldsymbol{X}(\Omega) = \nabla H^{-1}(\Omega)\oplus \curl \boldsymbol{H}_0(\curl;\Omega),
	\end{equation}
	where the space
	\begin{equation*}
		\boldsymbol{H}^{-2}(\curl;\Omega):=\left\{\v \in \boldsymbol{H}^{-2}(\Omega): \curl \v \in \boldsymbol{H}^{-1}(\Omega) \right\},
	\end{equation*}
	equipped with the norm
	\begin{equation*}
		\|\v\|^2_{\boldsymbol{H}^{-2}(\curl;\Omega)}:=\|\v\|^2_{-2}+ \|\curl \v\|_{-1}^2.
	\end{equation*}
\end{lemma}
To characterize the trace space of $\boldsymbol{V}(\Omega)$, we first recall the classical trace operators. The standard theory states that on a Lipschitz domain $\Omega$, these operators extend continuously and surjectively to the relevant Sobolev spaces in the weak setting:
\begin{align}
	\label{Trace0}
	\gamma_0 &: H^s(\Omega) \rightarrow H^{s-1/2}(\Gamma): v \to v|_{\Gamma} \text{ for } \frac{1}{2}<s<\frac{3}{2} \quad \text{\cite[Theorem 3.38]{McLean2000}},  \\
	\boldsymbol{\gamma}_{\tau} &: \boldsymbol{H}(\text{curl};\Omega) \rightarrow \boldsymbol{H}^{-1/2}(\text{curl}_{\Gamma}; \Gamma): \v \to \boldsymbol{n}\wedge(\v \wedge\boldsymbol{n})|_{\Gamma} \quad \text{\cite[Theorem 4.1]{Buffa2002}},  \\
	\label{HDivTrace}
	\boldsymbol{\gamma}_n &: \boldsymbol{H}(\text{div};\Omega) \rightarrow H^{-1/2}(\Gamma):\v \to (\v\cdot \boldsymbol{n})|_{\Gamma} \quad \text{\cite[Theorem 2.5, Corollary 2.8]{GiraultRaviart1986}}. 
\end{align}
The trace space $H^s(\Gamma)$ for $0 \le s \le 1$ is defined via localization and pullback by charts \cite[Chapter 3]{McLean2000}. This definition is extended by duality to the range $-1 \le s < 0$, identifying $H^{-s}(\Gamma)$ as the dual of $H^s(\Gamma)$. The same chart-based approach also enables the definition of surface differential operators, including the surface gradient $\nabla_{\Gamma}$ and the surface scalar curl $\curl_{\Gamma}$, as described in \cite{Buffa2002,Buffa2003}. The trace space $\boldsymbol{H}^{-1/2}(\text{curl}_{\Gamma};\Gamma)$ 
denotes the dual of the range of the tangential trace operator applied to $\boldsymbol{H}^1(\Omega)$.
These mappings are summarized in the following commutative diagram: 
\begin{equation}\label{EnhaceDeRhamTraceCom}
	\begin{tikzcd}
		& 
		H^1(\Omega) \arrow[r, "\nabla"] \arrow[dd, "\gamma_0"] & 
		\boldsymbol{H}(\curl; \Omega)  \arrow[r, "\curl"] \arrow[dd, "\boldsymbol{\gamma}_{\tau}"] &
		\boldsymbol{V}(\Omega) \arrow[r, "\div"] \arrow[dd, "\boldsymbol{\gamma}_n"] \arrow[rd, dashed, "\boldsymbol{\gamma}_{\div}"'] & H^1(\Omega) \arrow[r,] \arrow[d, "\boldsymbol{\gamma}_0"] & 0\\
		&  &  &  & H^{\frac{1}{2}}(\Gamma) \arrow[d, ] \\
		& 
		H^{\frac{1}{2}}(\Gamma) \arrow[r, "\nabla_{\Gamma}"] & 
		\boldsymbol{H}^{-\frac{1}{2}}(\operatorname{curl}_{\Gamma};\Gamma) \arrow[r, "\operatorname{curl}_{\Gamma}"] & 
		H^{-\frac{1}{2}}(\Gamma)  \arrow[r,]& 0
	\end{tikzcd},
\end{equation}
where the operator $\boldsymbol{\gamma}_{\div}$ is defined by $\boldsymbol{\gamma}_{\div}(\v):= \gamma_0(\div \v)$ for any $\v \in \boldsymbol{V}(\Omega)$. Both complexes in \eqref{EnhaceDeRhamTraceCom} are exact on contractible Lipschitz domains \cite{Arnold2018}, which means:
\begin{gather}\label{ExactonDomain}
	\text{img}(\nabla)=\ker(\curl), \quad \text{img}(\curl)=\ker(\div),\quad \text{img}(\div) =H^1(\Omega),\\
	\label{ExactonTrace}
	\text{img}(\nabla_{\Gamma})=\ker(\curl_{\Gamma}),\quad \text{img}(\curl_{\Gamma})=H^{-\frac{1}{2}}(\Gamma).
\end{gather}
The trace space of $\boldsymbol{V}(\Omega)$ is defined as:
\begin{equation*}
	V(\Gamma):=\{(h, g)\in  H^{-\frac{1}{2}}(\Gamma)\times H^{\frac{1}{2}}(\Gamma)\}.
\end{equation*}

\begin{theorem}\label{SurjForV}
	On a contractible Lipschitz domain $\Omega$, the map $\v \to \{ \boldsymbol{\gamma}_{n}(\v), \boldsymbol{\gamma}_{\div}(\v)\} : \boldsymbol{V}(\Omega) \to V(\Gamma)$ is continuous and surjective.
\end{theorem}
\begin{proof}
	The continuity follows directly from \eqref{HDivTrace} and \eqref{Trace0}: for any $\v\in \boldsymbol{V}(\Omega)$,
	\begin{equation*}
		\| \boldsymbol{\gamma}_{\tau} (\v)\|_{-\frac{1}{2},\Gamma}+\|\boldsymbol{\gamma}_{\div}(\v)\|_{\frac{1}{2},\Gamma}\lesssim \|\v\|_{\boldsymbol{H}(\div;\Omega)}+ \|\div \v\|_{1} \lesssim \|\v\|_{\boldsymbol{V}(\Omega)}.
	\end{equation*}
	\par
	We now prove the surjectivity.
	Let $(h,g)\in V(\Gamma)$. By the surjectivity of $\gamma_0$, there exists $w \in H^1(\Omega)$ with $\gamma_0(w) = g$. The exactness \eqref{ExactonDomain} then yields a function $\boldsymbol{u}^g \in \boldsymbol{V}(\Omega)$ such that
	\begin{equation*} 
		\div\boldsymbol{\bu}^g=w.
	\end{equation*}
	Meanwhile, \eqref{ExactonTrace} implies the existence of $\boldsymbol{q}_{\tau} \in \boldsymbol{H}^{-\frac{1}{2}}(\curl_{\Gamma}; \Gamma)$ satisfying
	\begin{equation}\label{auxSurj}
		\curl_{\Gamma} \boldsymbol{q}_{\tau} =  h-\boldsymbol{\gamma}_{n}(\boldsymbol{\bu}^g).
	\end{equation}
	Taking $\boldsymbol{q} \in \boldsymbol{H}(\curl;\Omega)$ such that $\boldsymbol{\gamma}_{\tau}(\boldsymbol{q}) = \boldsymbol{q}_{\tau}$ (which exists by the surjectivity of $\boldsymbol{\gamma}_{\tau}$), and using the commutativity of \eqref{EnhaceDeRhamTraceCom} and \eqref{auxSurj}, we derive
	$$\boldsymbol{\gamma}_{n}(\curl \boldsymbol{q})=  \curl_{\Gamma}\boldsymbol{\gamma}_{\tau}(\boldsymbol{q})= h-\boldsymbol{\gamma}_{n}(\boldsymbol{\bu}^g). $$
	Thus, we define  $\boldsymbol{u}^{\partial}: = \curl \boldsymbol{q} + \boldsymbol{u}^g$, which belongs to $\boldsymbol{V}(\Omega)$ and meets the required boundary conditions:
	\begin{equation*}
		\boldsymbol{\gamma}_{n}(\boldsymbol{\boldsymbol{u}}^{\partial})= \boldsymbol{\gamma}_{n}(\curl \boldsymbol{q})+\boldsymbol{\gamma}_{n}(\boldsymbol{u}^{g})=h \text{ and }\boldsymbol{\gamma}_{\div}(\boldsymbol{\bu}^{\partial})=\boldsymbol{\gamma}_0(\div \boldsymbol{u}^{\partial})=\boldsymbol{\gamma}_0(\div \boldsymbol{u}^{g})=g.
	\end{equation*}
	The proof is complete.
\end{proof}
\section{The quad-div problem}
We begin by deriving the variational formulation for \eqref{primalForm}. From the characterization \eqref{DualDec}, for a source term $\boldsymbol{f}\in \boldsymbol{V}'(\Omega)\cap \ker(\curl)$, we have
\begin{equation*}
	\boldsymbol{V}'(\Omega)\cap \ker(\curl) = \boldsymbol{H}^{-2}(\curl;\Omega)\cap\ker(\curl)=\boldsymbol{H}^{-2}(\Omega)\cap\ker(\curl).
\end{equation*}
Then the dual norm of $\boldsymbol{f}$ satisfies 
\begin{equation}\label{dualNorm}
	\|\boldsymbol{f}\|_{\boldsymbol{V}'(\Omega)}=\|\boldsymbol{f}\|_{\boldsymbol{H}^{-2}(\curl;\Omega)}=\|\boldsymbol{f}\|_{-2}.
\end{equation}
A natural first attempt is to introduce a Lagrange multiplier $\boldsymbol{\varphi}$, leading to the following mixed formulation of \eqref{primalForm}: 
find $(\bu,\boldsymbol{\varphi})\in \boldsymbol{V}_0(\Omega)\times \boldsymbol{H}_0(\curl;\Omega)$ such that
\begin{equation}\label{illmix}
	\begin{aligned}
		(\nabla\div\bu,\nabla\div\v)+(\curl\boldsymbol{\varphi},\boldsymbol{v})&=\langle\boldsymbol{f},\boldsymbol{v}\rangle,\quad \forall \v \in \boldsymbol{V}_0(\Omega),\\
		(\bu,\curl\boldsymbol{\phi})&=0,\quad \forall \boldsymbol{\phi}\in \boldsymbol{H}_0(\curl;\Omega).    
	\end{aligned}
\end{equation}
However, due to the non-trivial kernel of the curl operator, the substitution $\v=\curl\boldsymbol{\varphi}$ gives only $\curl \boldsymbol{\varphi}=0$, which does not imply  $\boldsymbol{\varphi}=0$. Consequently, the problem \eqref{illmix} is not well‑posed, and an additional 
restriction $\boldsymbol{\varphi} \in \boldsymbol{H}_0(\curl;\Omega)/\nabla H_0^1(\Omega)$ is required. 
Thus, we define the weak formulation of the quad-div problem \eqref{primalForm} as:
\begin{definition}
	Given $\boldsymbol{f}\in \boldsymbol{H}^{-2}(\Omega)\cap\ker(\curl)$,  find 
	$(\boldsymbol{u},  \boldsymbol{\varphi},  p ) \in  \boldsymbol{V}_0(\Omega)\times\boldsymbol{H}_0(\operatorname{curl};\Omega)\times H_0^1(\Omega)$ such that
	\begin{equation}\label{mixForm}
		\begin{aligned}
			(\nabla\div \boldsymbol{u}, \nabla\div \boldsymbol{v})+(\operatorname{curl}\boldsymbol{\varphi}, \boldsymbol{v})&=\langle\boldsymbol{f}, \boldsymbol{v}\rangle, \quad \forall \boldsymbol{v}\in \boldsymbol{V}_0(\Omega), \\
			(\boldsymbol{u}, \operatorname{curl}\boldsymbol{\phi})+(\nabla p,  \boldsymbol{\phi}) &=0,  \quad \forall \boldsymbol{\phi}\in \boldsymbol{H}_0(\operatorname{curl};\Omega), \\
			(\boldsymbol{\varphi}, \nabla q  )&=0, \quad \forall q\in H_0^1(\Omega).
		\end{aligned}
	\end{equation}
\end{definition}
\begin{theorem}\label{Theorem1}
	Assume that $\Omega$ is a contractible Lipschitz domain.
	For each $\boldsymbol{f}\in\boldsymbol{H}^{-2}(\Omega)\cap \ker(\curl)$,  there exists a unique solution $(\boldsymbol{u},  \boldsymbol{\varphi},  p )$ $\in \boldsymbol{V}_0(\Omega)\times\boldsymbol{H}_0(\operatorname{curl};\Omega)\times H_0^1(\Omega)$ 
	for the problem \eqref{mixForm}. Moreover,  $\boldsymbol{\varphi}=0 $,  $p = 0$ and $\boldsymbol{u}$ satisfies
	\begin{equation*}
		\|\boldsymbol{u}\|_{\boldsymbol{V}(\Omega)} \lesssim  \|\boldsymbol{f}\|_{-2}.
	\end{equation*}
\end{theorem}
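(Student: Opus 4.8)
The plan is to decouple the three-field system: first show that the auxiliary unknowns $p$ and $\boldsymbol{\varphi}$ are forced to vanish, and then reduce everything to a coercive problem for $\boldsymbol{u}$ on the curl-free subspace $\boldsymbol{Y}$. To obtain $p=0$, I would test the second equation of \eqref{mixForm} with $\boldsymbol{\phi}=\nabla s$ for arbitrary $s\in H_0^1(\Omega)$; since $\nabla s\in\boldsymbol{H}_0(\operatorname{curl};\Omega)$ with $\operatorname{curl}\nabla s=0$, the coupling term $(\boldsymbol{u},\operatorname{curl}\nabla s)$ drops out and only $(\nabla p,\nabla s)=0$ remains, so choosing $s=p$ and invoking the Poincar\'e inequality gives $p=0$.

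The decisive step is $\boldsymbol{\varphi}=0$. The third equation says $\boldsymbol{\varphi}$ is divergence-free, so $\boldsymbol{\varphi}\in\boldsymbol{X}$; moreover $\operatorname{curl}\boldsymbol{\varphi}\in\boldsymbol{H}_0(\operatorname{grad-div};\Omega)$ by the complex \eqref{ContiComplex}, hence it is an admissible test function in the first equation. Testing with $\boldsymbol{v}=\operatorname{curl}\boldsymbol{\varphi}$ and using $\operatorname{div}\operatorname{curl}\boldsymbol{\varphi}=0$ annihilates the grad-div term and leaves $\|\operatorname{curl}\boldsymbol{\varphi}\|^2=(\boldsymbol{f},\operatorname{curl}\boldsymbol{\varphi})$. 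Here the hypothesis $\boldsymbol{f}\in\boldsymbol{H}(\operatorname{curl}^0;\Omega)$ is essential: it makes the right-hand side vanish, whence $\operatorname{curl}\boldsymbol{\varphi}=0$, and the Friedrichs inequality \eqref{Frieq2} on $\boldsymbol{X}$ then forces $\boldsymbol{\varphi}=0$.

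With $p=\boldsymbol{\varphi}=0$, the second equation gives $\boldsymbol{u}\in\boldsymbol{H}(\operatorname{curl}^0;\Omega)$, hence $\boldsymbol{u}\in\boldsymbol{Y}$, and the first equation reduces to $(\nabla\operatorname{div}\boldsymbol{u},\nabla\operatorname{div}\boldsymbol{v})=(\boldsymbol{f},\boldsymbol{v})$. The crux is that $a(\boldsymbol{u},\boldsymbol{u})=\|\nabla\operatorname{div}\boldsymbol{u}\|^2$ is coercive on $\boldsymbol{Y}$ for the full grad-div norm: since $\operatorname{div}\boldsymbol{u}\in H_0^1(\Omega)$, the Poincar\'e inequality yields $\|\operatorname{div}\boldsymbol{u}\|\lesssim\|\nabla\operatorname{div}\boldsymbol{u}\|$, while the Friedrichs inequality \eqref{Frieq3} on $\boldsymbol{Y}$ yields $\|\boldsymbol{u}\|_1\lesssim\|\operatorname{div}\boldsymbol{u}\|$; chaining these bounds every term of $\|\boldsymbol{u}\|_{\boldsymbol{H}(\operatorname{grad-div};\Omega)}$ by $\|\nabla\operatorname{div}\boldsymbol{u}\|$. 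The Lax--Milgram theorem on the closed subspace $\boldsymbol{Y}$ then produces a unique $\boldsymbol{u}$ with $\|\boldsymbol{u}\|_{\boldsymbol{H}(\operatorname{grad-div};\Omega)}\lesssim\|\boldsymbol{f}\|$. To confirm that $(\boldsymbol{u},0,0)$ solves the full problem for \emph{every} $\boldsymbol{v}\in\boldsymbol{H}_0(\operatorname{grad-div};\Omega)$, not just those in $\boldsymbol{Y}$, I would split $\boldsymbol{v}=\operatorname{curl}\boldsymbol{\psi}+\boldsymbol{v}_Y$ via the Hodge decomposition \eqref{decom2}: the grad-div term sees only $\boldsymbol{v}_Y$ because $\operatorname{div}\operatorname{curl}\boldsymbol{\psi}=0$, and the load sees only $\boldsymbol{v}_Y$ because $\boldsymbol{f}\perp\operatorname{curl}\boldsymbol{\psi}$, so the reduced identity suffices.

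Uniqueness follows by applying the same $p=0$ and $\boldsymbol{\varphi}=0$ reductions to the homogeneous problem and then using coercivity to conclude $\boldsymbol{u}=0$. I expect the coercivity estimate to be the main obstacle, since recovering the full norm $\|\boldsymbol{u}\|_{\boldsymbol{H}(\operatorname{grad-div};\Omega)}$ from the seminorm $\|\nabla\operatorname{div}\boldsymbol{u}\|$ alone requires carefully combining the boundary condition $\operatorname{div}\boldsymbol{u}=0$ on $\partial\Omega$, the Poincar\'e inequality, and the Friedrichs inequality \eqref{Frieq3} restricted to the curl-free subspace $\boldsymbol{Y}$.
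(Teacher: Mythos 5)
Your proof is correct, but it is organized differently from the paper's. The paper runs the full Babu\v{s}ka--Brezzi machinery for the saddle-point form $A((\cdot,\cdot),(\cdot,\cdot))+B((\cdot,\cdot),\cdot)$: it proves coercivity of $A$ on the kernel $\boldsymbol{Z}$ (the same Poincar\'e-plus-\eqref{Frieq3} chain you use) \emph{and} an inf-sup condition for $B$, obtained by decomposing the test function $\boldsymbol{\phi}=\nabla\lambda+\boldsymbol{z}$ and choosing $(\boldsymbol{v},q)=(\operatorname{curl}\boldsymbol{\phi},\lambda)$; well-posedness of the full three-field system follows at once, and only afterwards does the paper identify $\boldsymbol{\varphi}=0$ and $p=0$. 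You instead reduce first: any solution must have $p=0$ and $\boldsymbol{\varphi}=0$ (your route to $\boldsymbol{\varphi}=0$ is in fact slightly cleaner than the paper's, since you read off $\boldsymbol{\varphi}\in\boldsymbol{X}$ directly from the third equation and apply \eqref{Frieq2} to $\boldsymbol{\varphi}$ itself, whereas the paper splits $\boldsymbol{\varphi}=\nabla\lambda+\boldsymbol{z}$ and kills the two pieces separately), which pins $\boldsymbol{u}$ down in $\boldsymbol{Y}$ and lets you invoke Lax--Milgram on that closed subspace. The trade-off is that you avoid proving the inf-sup condition entirely, but you must then verify that $(\boldsymbol{u},0,0)$ satisfies the first equation for \emph{all} $\boldsymbol{v}\in\boldsymbol{H}_0(\operatorname{grad-div};\Omega)$, which you do correctly via the Hodge splitting \eqref{decom2} together with $\operatorname{div}\operatorname{curl}=0$ and $\boldsymbol{f}\perp\operatorname{curl}\boldsymbol{H}_0(\operatorname{curl};\Omega)$. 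The paper's route has the minor advantage of delivering the a priori bound on all three components $\|\boldsymbol{u}\|_{\boldsymbol{H}(\operatorname{grad-div};\Omega)}+\|p\|_1+\|\boldsymbol{\varphi}\|_{\boldsymbol{H}(\operatorname{curl};\Omega)}$ in one stroke and of being the template that is reused verbatim for the discrete problem (Theorem \ref{ExiUni}, where the inf-sup condition of Remark \ref{disInfSup} is genuinely needed); your route is more elementary for the continuous problem but would not transfer as directly to the discrete setting.
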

\begin{proof}
	For any  $\boldsymbol{v} \in \boldsymbol{Y}(\Omega) $,  we use the Poincar\'{e} inequality and the Friedrichs inequality \eqref{Frieq3} to get 
	\begin{equation}\label{acoer}
		(\nabla\div \v,\nabla\div \v)\gtrsim \|\div \v\|^2_1\gtrsim \|\v\|^2_{\boldsymbol{V}(\Omega)}.
	\end{equation}
	Define a bounded bilinear form $B:( \Hgd \times H_0^1(\Omega) ) \times \boldsymbol{H}_0(\operatorname{curl};\Omega) \to \mathbb{R}$ by 
	$B((\boldsymbol{v}, q), \boldsymbol{\phi}) = (\boldsymbol{v}, \operatorname{curl}\boldsymbol{\phi})+(\nabla q, \boldsymbol{\phi})$. Let
	\begin{equation*}
		\boldsymbol{Z}(\Omega)=\{(\boldsymbol{v}, q)\in \boldsymbol{V}_0(\Omega)\times H^1_0(\Omega): B((\boldsymbol{v}, q), \boldsymbol{\phi})=0,  \forall \boldsymbol{\phi}\in \boldsymbol{H}_0(\operatorname{curl};\Omega)\}.
	\end{equation*}
	For any $(\boldsymbol{v}, q) \in \boldsymbol{Z}(\Omega)$, taking $\boldsymbol{\phi} = \nabla q$ gives $B((\boldsymbol{v}, q), \boldsymbol{\phi}) = (\nabla q, \nabla q) = 0$, which implies $q = 0$. Hence, each $(\boldsymbol{v}, q) \in \boldsymbol{Z}(\Omega)$ is identified with $\boldsymbol{v} \in \boldsymbol{Y}(\Omega)$ and $q = 0$. This identification allows us to introduce the bilinear form $$A((\boldsymbol{u}, p), (\boldsymbol{v}, q) ):=(\nabla \operatorname{div}\boldsymbol{u}, \nabla \operatorname{div}\boldsymbol{v}),$$ which, together with \eqref{acoer}, yields the coercivity estimate
	\begin{equation*}
		A((\boldsymbol{v}, q), (\boldsymbol{v}, q)) \gtrsim(\|\boldsymbol{v}\|^2_{\boldsymbol{V}(\Omega)}+\|q\|_1^2), \quad \forall (\boldsymbol{v}, q)\in \boldsymbol{Z}(\Omega).
	\end{equation*}
	For any $\boldsymbol{\phi} \in \boldsymbol{H}_0(\operatorname{curl};\Omega)$,  
	the orthogonal decomposition \eqref{decom1} leads to $\boldsymbol{\phi} = \nabla \lambda +\boldsymbol{z} $,  where $\lambda \in H_0^1(\Omega)$ and $\boldsymbol{z} \in \boldsymbol{X}(\Omega)$. 
	Choosing $q=\lambda $ and $ \boldsymbol{v} =\operatorname{curl} \boldsymbol{\phi} $ yields
	\begin{equation*}
		\begin{aligned}
			B((\boldsymbol{v}, q), \boldsymbol{\phi})&=(\operatorname{curl}\boldsymbol{z}, \operatorname{curl}\boldsymbol{\phi})+(\nabla \lambda, \boldsymbol{\phi})\\ &=(\operatorname{curl}\boldsymbol{z}, \operatorname{curl}\boldsymbol{z})+(\nabla \lambda, \nabla \lambda)\\
			&\gtrsim \|\operatorname{curl}\boldsymbol{z}\|^2 + \|\boldsymbol{z}\|^2+|\lambda|_1^2 \\
			& = \|\boldsymbol{\phi}\|^2_{\boldsymbol{H}(\operatorname{curl};\Omega)}.
		\end{aligned}
	\end{equation*}
	Thus, the coercivity on $\boldsymbol{Z}(\Omega)$ and 
	Babu$\check{\text{s}}$ka-Brezzi condition are satisfied for the following variational problem 
	\begin{equation*}
		\begin{aligned}
			A((\bu,p),(\v,q))+B((\v,q),\boldsymbol{\varphi}) &= \langle\boldsymbol{f}, \boldsymbol{v}\rangle,\quad \forall (\v,q) \in \boldsymbol{V}_0(\Omega)\times H^1_0(\Omega),\\
			B((\bu,p),\boldsymbol{\phi}) &=0, \quad \forall \boldsymbol{\phi} \in \boldsymbol{H}_0(\operatorname{curl}; \Omega),
		\end{aligned}
	\end{equation*}
	which is obviously equivalent to the variational problem \eqref{mixForm}.
	Consequently,  \eqref{mixForm} has a unique solution and
	\begin{equation*}
		\|\boldsymbol{u}\|_{\boldsymbol{V}(\Omega)}+\|p\|_1+\|\boldsymbol{\varphi}\|_{\boldsymbol{H}(\operatorname{curl};\Omega)} \lesssim  \|\boldsymbol{f}\|_{\boldsymbol{V}'(\Omega)}=\|\boldsymbol{f}\|_{-2}.
	\end{equation*}
	We have used the dual norm \eqref{dualNorm}.
	\par
	Replacing $\boldsymbol{\phi} $ with $\nabla p $ in the second equation of \eqref{mixForm}, we obtain $p=0$ from the Poincar\'{e} inequality. According to the decomposition \eqref{decom1}, there exist $\lambda \in H^1_0(\Omega)$ and $\boldsymbol{z}\in \boldsymbol{X}(\Omega)$ such that $ \boldsymbol{\varphi} = \nabla \lambda + \boldsymbol{z}$.  
	Taking $\boldsymbol{v} = \curl \boldsymbol{\varphi} $ and $q =\lambda$ in the first equation and the last equation of \eqref{mixForm}, 
	we get $(\curl \boldsymbol{\varphi}, \curl \boldsymbol{\varphi}) = (\curl \boldsymbol{z}, \curl \boldsymbol{z}) = 0$ and $(\boldsymbol{\varphi}, \nabla\lambda)= (\nabla\lambda ,\nabla\lambda) =0$, respectively.
	Using the Friedrichs inequality \eqref{Frieq2} and Poincar\'{e} inequality,  we have $\boldsymbol{z}=0$ and $\lambda=0$. 
	Thus, it follows that
	\begin{equation}\label{equivalentProblem}
		\begin{aligned}
			(\nabla \operatorname{div}\boldsymbol{u}, \nabla \operatorname{div}\boldsymbol{v})&=\langle\boldsymbol{f}, \boldsymbol{v}\rangle, \quad \forall \boldsymbol{v}\in \boldsymbol{V}_0(\Omega),\\
			(\boldsymbol{u}, \operatorname{curl}\boldsymbol{\phi})&=0, \quad \quad \quad 
			\forall \boldsymbol{\phi}\in \boldsymbol{H}_0(\operatorname{curl};\Omega), 
		\end{aligned}
	\end{equation}
	which completes the proof.
\end{proof}
The density of $\boldsymbol{C}_0^{\infty}(\Omega)$ in $\boldsymbol{V}(\Omega)$ and $\boldsymbol{H}_0(\curl\Omega)$ implies that the solution $\bu\in \boldsymbol{Y}(\Omega)$ to the mixed formulation \eqref{mixForm} satisfies the primal formulation \eqref{primalForm} in the sense of distributions.
\begin{remark}[The regularity of the solution]\label{EquAndReg}
	On the contractible polyhedral domain $\Omega$, the Friedrichs inequality \eqref{Frieq3} implies that
	$$\bu \in \boldsymbol{H}^s(\Omega) \text{ with } s>\frac{1}{2}.  $$
	From the exactness of the sequence \eqref{Realscomplex}, for any given $\boldsymbol{f} \in \boldsymbol{H}^{s-2}(\Omega)\cap\ker(\curl)$  with $s\ge 0$, there exists a function $j_0 \in H^{s-1}(\Omega)$ such that $\boldsymbol{f} = -\nabla j_0$. Applying integration by parts to both sides of the first equation in \eqref{equivalentProblem} then yields 
	\begin{equation}\label{divPos}
		-\Delta \div \bu = j_0/\mathbb{R}.    
	\end{equation}
	By elliptic regularity on polyhedral domains \cite[Theorem 3.18]{Monk2003}, we have $$\operatorname{div} \boldsymbol{u} \in H^{s+1}(\Omega),$$ provided that $j_0 \in H^{s-1}(\Omega)$ with $s > \frac{1}{2}$.  
\end{remark}
\begin{remark}\label{divSolution}
	Consider the following Poisson equation with an integral constraint: given $j\in H^{-1}(\Omega)$, find $(w, C) \in H_0^1(\Omega) \times \mathbb{R}$ such that
	\begin{equation}\label{HomPos}
		\begin{aligned}
			(\nabla w, \nabla v)+ (C,v)&=\langle j,v \rangle,\quad \forall v \in H_0^1(\Omega),\\
			(w,q)&=0,\quad \forall q\in \mathbb{R}.
		\end{aligned}
	\end{equation}
	The well-posedness of this problem follows directly from the classical theory of mixed variational formulations \cite{Boffi2013}. 
	Furthermore, assuming $\boldsymbol{f} = -\nabla j$, let $\boldsymbol{u}$ and $w$ be the unique solutions of \eqref{mixForm} and \eqref{HomPos}, respectively. The exactness $\operatorname{div} \boldsymbol{V}_0(\Omega) = H_0^1(\Omega) \cap L_0^2(\Omega)$ implies
	\begin{equation*}
		(\nabla\div \bu ,\nabla \div \v)= \langle j, \div \v \rangle = (\nabla w, \nabla \div \v), \quad \forall \v \in \boldsymbol{V}_0(\Omega),
	\end{equation*}
	from which the Poincar\'{e} inequality yields $\operatorname{div} \boldsymbol{u} = w$.
\end{remark}
\par
The construction of the $\boldsymbol{H}(\operatorname{grad-div})$-conforming virtual element space relies on the following boundary value problem:
\begin{definition} 
	Given 
	\begin{equation}\label{Nonhomcond}
		\boldsymbol{f}^d\in \boldsymbol{H}^{-2}(\Omega)\cap\ker(\curl), \boldsymbol{f}^c\in \boldsymbol{H}^{-1}(\Omega)\cap\ker(\div), h\in H^{-\frac{1}{2}}(\Gamma) \text{ and } g\in H^{\frac{1}{2}}(\Gamma),    
	\end{equation}
	find $\bu \in \boldsymbol{V}(\Omega)$ with $(\boldsymbol{\gamma}_{n}(\v),\boldsymbol{\gamma}_{\div}(\v))=(h,g)\in Y(\Gamma)$ such that 
	\begin{equation}\label{NonHomQuadDiv}
		(\nabla\div)^2\bu^{\text{non}}=\boldsymbol{f}^d,\quad \curl \bu^{\text{non}} =\boldsymbol{f}^c  \text{ in } \Omega.
	\end{equation}
\end{definition}
We consider the non‑homogeneous Poisson problem:  given $j\in H^{-1}(\Omega)$, find $(w^{\text{non}},C)\in H^1(\Omega)\times \mathbb{R}$ such that  
\begin{equation}\label{EquH1problem}
	-\Delta w^{\text{non}} +C = j \text{ in } \Omega,\quad  w^{\text{non}} = g \text{ on } \Gamma, \quad (w^{\text{non}},1)= \langle h,1\rangle_{\Gamma},
\end{equation}
where $g$ and $h$ are given data, identical to those specified in \eqref{Nonhomcond}. 
In analogy with Remark \ref{divSolution}, we interpret the solution of \eqref{EquH1problem} as the divergence component of the solution to \eqref{NonHomQuadDiv}.
\begin{theorem}\label{NonHomTheorem}
	Let $\Omega$ be a contractible Lipschitz domain. Then the quad-div problem \eqref{NonHomQuadDiv} is well-posed, with its unique solution $\boldsymbol{u}^{\text{non}} \in \boldsymbol{V}(\Omega)$ satisfying
	\begin{equation}\label{NonhomEsti}
		\|\bu^{\text{non}}\|_{\boldsymbol{V}(\Omega)}\lesssim \|\boldsymbol{f}^d\|_{-2}+ \|\boldsymbol{f}^c\|_{-1} +\|h\|_{-\frac{1}{2},\Gamma} +\|g\|_{\frac{1}{2},\Gamma}.
	\end{equation}
	The Poisson problem \eqref{EquH1problem} is likewise well-posed, and if $\boldsymbol{f}^d = -\nabla j$, we have $$\operatorname{div} \boldsymbol{u}^{\mathrm{non}} = w^{\mathrm{non}}$$ for its unique solution $(w^{\mathrm{non}}, C) \in H^1(\Omega) \times \mathbb{R}$.
\end{theorem}	
\begin{proof}
	By Theorem \ref{SurjForV}, there exists a vector field $\boldsymbol{u}^{\partial} \in \boldsymbol{V}(\Omega)$ satisfying the boundary conditions
	$$\boldsymbol{\gamma}_{n}(\bu^{\partial})=h \quad \text{and}\quad  \boldsymbol{\gamma}_{\div}(\bu^{\partial})=g,$$ 
	and the estimate
	\begin{equation}
		\|\bu^{\partial}\|_{\boldsymbol{V}(\Omega)}\lesssim \|h\|_{-\frac{1}{2},\Gamma} + \|g\|_{\frac{1}{2},\Gamma}.
	\end{equation}
	Then the problem \eqref{NonHomQuadDiv} can be decomposed into finding $\boldsymbol{u}^{0} \in \boldsymbol{Y}(\Omega)$ and $\boldsymbol{\psi} \in \boldsymbol{X}(\Omega)$ that satisfy, respectively,
	\begin{equation}\label{problem1}
		(\nabla\div \bu^{0},\nabla\div \boldsymbol{v})= \langle\boldsymbol{f}^d,\boldsymbol{v}\rangle-(\nabla\div \bu^{\partial},\nabla\div\boldsymbol{v}),\quad \forall\boldsymbol{v}\in\boldsymbol{Y}(\Omega),
	\end{equation}
	and
	\begin{equation}\label{problem2}
		(\curl \boldsymbol{\psi},\curl \boldsymbol{\phi})=\langle\boldsymbol{f}^c ,q\rangle-(\bu^{\partial},\curl \boldsymbol{\phi}),\quad \forall \boldsymbol{\phi}\in \boldsymbol{X}(\Omega).
	\end{equation}
	Applying Friedrichs inequalities \eqref{Frieq3} and \eqref{Frieq2} together with the Poincaré inequality yields the estimates
	\begin{equation*}
		\|\bu^{0}\|_{\boldsymbol{V}(\Omega)}\lesssim |\div \bu^{0}|_{1} \lesssim \|\boldsymbol{f}^d\|_{-2} + |\div\bu^{\partial}|_{1},
	\end{equation*}
	and 
	\begin{equation*}
		\|\curl \boldsymbol{\psi}\|_{\boldsymbol{V}(\Omega)}=\|\curl\boldsymbol{\psi}\|\lesssim\|\boldsymbol{f}^c\|_{-1} + \|\bu^{\partial}\|.
	\end{equation*}
	Consequently, the function defined by $\boldsymbol{u}^{\text{non}} := \boldsymbol{u}^{0} + \curl \boldsymbol{\psi} + \boldsymbol{u}^{\partial}$ is the unique solution of \eqref{NonHomQuadDiv} and satisfies \eqref{NonhomEsti}. Furthermore, if $w^0$ is the unique solution of \eqref{HomPos}, then we identify $w^{\text{non}} = w^0 + \div \boldsymbol{u}^{\partial}$ as the unique solution to \eqref{EquH1problem}.
	\par
	Now, assume $\boldsymbol{f}^d = -\nabla j$. Then, for all $\boldsymbol{v} \in \boldsymbol{V}_0(\Omega)$, we have
	\begin{align*}
		(\nabla\div \bu^{\text{non}},\nabla\div \boldsymbol{v}) = \langle (\nabla\div)^2\bu^{\text{non}},\boldsymbol{v}\rangle=\langle \boldsymbol{f}^d,\boldsymbol{v} \rangle\\
		=\langle j,\div \v\rangle =(\nabla w^{\text{non}},\nabla\div\v).
	\end{align*}
	Moreover, the following boundary and integral conditions hold:
	\begin{equation*}
		\div \bu^{\text{non}} = w^{\text{non}} \text{ on } \Gamma,\quad ( \div \bu^{\text{non}}, 1)= \langle\bu\cdot \boldsymbol{n},1 \rangle_{\Gamma}=\langle h,1 \rangle_{\Gamma} = (w^{\text{non}},1).
	\end{equation*}
	Thus, we deduce that $\div \boldsymbol{u} ^{\text{non}} = w ^{\text{non}}$ holds, thereby completing the proof.
\end{proof}
\section{Virtual element spaces}
Based on the continuous complex
\begin{equation}\label{GraddivComplex}
	\mathbb{R} \stackrel{\subset}{\longrightarrow} H^{1}(\Omega) \stackrel{\nabla}{\longrightarrow} \boldsymbol{H}(\curl;\Omega) \stackrel{\curl}{\longrightarrow}\boldsymbol{V}(\Omega) \stackrel{\div }{\longrightarrow} H^1(\Omega) \longrightarrow 0,
\end{equation}
we construct a conforming discrete subcomplex for any integers $r$, $k\ge 1$:
\begin{equation}\label{discomplex}
	\mathbb{R} \stackrel{\subset}{\longrightarrow} U_{1}(\Omega) \stackrel{\nabla}{\longrightarrow} \boldsymbol{\Sigma}_{0,r}( \Omega) \stackrel{\text { curl }}{\longrightarrow} \boldsymbol{V}_{r-1,k+1}( \Omega) \stackrel{\operatorname{div}}{\longrightarrow} W_k(\Omega) {\longrightarrow} 0.
\end{equation}
To this end, the $\boldsymbol{H}(\operatorname{grad-div})$-conforming virtual element space $\boldsymbol{V}_{r-1,k+1}(\Omega) \subset \boldsymbol{V}(\Omega)$ is carefully designed to satisfy $$\div \boldsymbol{V}_{r-1,k+1}(\Omega) = W_k(\Omega).$$  
In addition, we establish a commutative diagram that links the continuous and discrete complexes. We also derive the corresponding interpolation error estimates and stability analysis. \par
Central to the VEM framework is the definition of appropriate projection operators onto polynomial spaces.
On any given element or face $G$,  we introduce the $H^1$- seminorm projection $\Pi_k^{\nabla, G} : H^1(G) \to P_k(G)$ with $k\in \mathbb{N}_0$:
\begin{equation*}
	\left\{
	\begin{aligned}
		&(\nabla \Pi_k^{\nabla, G}v-\nabla v, \nabla p_k)_G=0,  \forall p_k \in P_k(G),  \\
		&\int_{\partial G} \Pi_k^{\nabla, G}v \d s=\int_{\partial G} v \d s. 
	\end{aligned}
	\right.
\end{equation*}
Moreover,  we define the $L^2$- projection $\Pi_k^{0, G}$: $L^2(G) \to P_k(G)$ on $G$ by 
\begin{equation*}
	(\Pi^{0, G}_k v-v, p_k)_G=0,  \forall p_k\in P_k(G),
\end{equation*}
which  can be easily extended to vector function $\Pi_{k}^{0,G}: \boldsymbol{L}^2(G)\to \boldsymbol{P}_k(G)$.
For polynomial vector spaces,  we have the following useful decompositions \cite{VEMfordivcurl,VEMforMaxwell}
\begin{align}
	\boldsymbol{P}_k(G)&=\nabla P_{k+1}(G)\oplus \boldsymbol{x}\wedge \boldsymbol{P}_{k-1}(G),  \label{Pkdc1}\\
	\boldsymbol{P}_k(G)&=\operatorname{curl} \boldsymbol{P}_{k+1}(G) \oplus \boldsymbol{x}P_{k-1}(G).  \label{Pkdc2}
\end{align}
Furthermore, the following identity holds:
\begin{equation}\label{VectPolyIden}
	\boldsymbol{P}_{r-2}(K)\cap\ker(\div)=\curl \boldsymbol{P}_{r-1}(K)=\curl (\boldsymbol{x}\wedge \boldsymbol{P}_{r-1}(K))=\boldsymbol{P}_{r-1}(K)/(\nabla P_{r}(K)),
\end{equation}
with the curl operator being an isomorphism on $\boldsymbol{x} \wedge \boldsymbol{P}_{k-1}(K)$. 
\subsection{$H^1$-conforming virtual element spaces}
In this subsection, we present two different  $H^1$-conforming virtual spaces.
For each face $f \in \partial K$, we recall the face space \cite{VEM2013} with $r\ge 1$, $l\ge 1$ 
\begin{equation}\label{Bf}
	\mathbb{B}_{l,r}(f):=\left\{v \in H^1(f): \Delta v \in P_{r-2}(f),  v|_e \in P_{l}(e), \forall e\in \partial f, v|_{\partial f} \in C^0(\partial f)\right\},
\end{equation}
equipped with the degrees of freedom
\begin{flalign}
	&\	\bullet \mathbf{D}^1_{\mathbb{B}}:  \text{the values of } v \text{ at the vertices of } f, \label{DB1}&\\
	&\	\bullet \mathbf{D}^2_{\mathbb{B}}:  \text{the values of } v \text{ at }l-1 \text{ distinct points of } e, \label{DB2}&\\
	&\	\bullet \mathbf{D}^3_{\mathbb{B}}:  \text{the face moments } \frac{1}{|f|}\int_f v  p_{r-2} \d f,  \forall p_{r-2} \in P_{r-2}(f) , \label{DB3}
\end{flalign}
Note that $\mathbf{D}^3_{\mathbb{B}}$ can be alternatively expressed as \cite{VEMforMaxwell}
\begin{equation}\label{AlDof}
	\frac{1}{|f|}\int_f \nabla v \cdot \boldsymbol{x}_f p_{r-2} \d f,  \forall p_{r-2} \in P_{r-2}(f),
\end{equation}
where $\boldsymbol{x}_f:= \boldsymbol{x}-\boldsymbol{b}_f$ satisfies $\int_f \boldsymbol{x}_f \d f=0$.
To reduce the number of degrees of freedom, we introduce the serendipity space from \cite{VEMforlowMaxwell}
\begin{equation}\label{SerBf}
	\begin{aligned}
		\overline{\mathbb{B}}_{1}(f):=\left\{v \in \mathbb{B}_{1,2}(f):
		\int_f\nabla v \cdot \boldsymbol{x}_f \d f=0 \right\}.
	\end{aligned}
\end{equation}
This serendipity space is a subspace of the primal space $\mathbb{B}_{1,2}(f)$ in which the degrees of freedom \eqref{AlDof} vanish. This construction does not impact the polynomial completeness, as it still satisfies $P_1(f) \subset \mathbb{B}_{1,r}(f)$.
\par
The first local space on the polyhedral element $K$, used to approximate $p$, is taken to be the one defined in \cite{VEMforlowMaxwell}:
\begin{equation*}
	U_1(K):=\{ q \in H^1(K): \Delta q =0,  q|_f \in \overline{\mathbb{B}}_1(f), \forall f \in \partial K,  q|_{\partial K} \in C^0(\partial K)\}.
\end{equation*}
It is endowed only with the degrees of freedom 
\begin{flalign}
	&\	\bullet \mathbf{D}^1_{U}:  \text{the values of } q \text{ at the vertices of } K, &
\end{flalign}
and satisfies $P_1(K) \subset U_1(K)$.
Gluing the local space over all elements $K$ in $\mathcal{T}_h$ produces the global space 
\begin{equation*}
	U_{1}(\Omega):=\{ q \in H^1( \Omega): q|_K \in U_{1}(K), \forall K \in \mathcal{T}_h\}.
\end{equation*}
As will be shown in Theorem \ref{ExiUni} and confirmed by numerical experiments in the final section, the discrete solution $p_h \in U_1(\Omega)$ vanishes. Hence, higher-order spaces are unnecessary, and it suffices to work with the lowest-order space $U_1(\Omega)$.
\par
The other space corresponds to the construction of the $\boldsymbol{H}(\operatorname{grad-div}
)$-conforming space. 
The restricted $H^1$-conforming virtual element space of order $k\ge1$ on each face $f\in \partial K$ is given by
\begin{equation}
	\begin{aligned}
		\widehat{\mathbb{B}}_{k}(f):=\left\{q \in \mathbb{B}_{k,k+2}(f):
		(q-\Pi^{\nabla, f}_k q, \widehat{p}_k )_f=0,  \forall \widehat{p}_k  \in \widehat{P}_{k/k-2}(f)\right\},
	\end{aligned}
\end{equation}
where the projection $\Pi^{\nabla,f}_k$ is computable using the degrees of freedom \eqref{DB1}--\eqref{DB3} of the space $\mathbb{B}_{k,k}(f)$ (cf. \cite{Ahmad2013}).
This induces the boundary space
\begin{equation*}
	\mathbb{B}_{k}(\partial K):=\{q\in  C^0(\partial K): q|_f\in \widehat{\mathbb{B}}_{k}(f),  \forall f \in \partial K \}.
\end{equation*}
On the polyhedron $K$, we introduce the local enlarged space 
\begin{equation}
	\begin{aligned}
		\widetilde{W}_{k}(K):=\left\{q\in H^1(K): 
		\Delta q \in P_k(K),     q|_{\partial K} \in \mathbb{B}_{k}(\partial K)\right\},
	\end{aligned}
\end{equation}
and the final restricted space with $m=\text{max}\{0, k-2\}$
\begin{equation}\label{WFinal}
	W_k(K):=\left\{q\in \widetilde{W}_k(K): (q-\Pi^{\nabla, K}_k q, \widehat{p}_k )_K=0,   \forall  \widehat{p}_k \in P_{k/m}(K) \right\},
\end{equation}
equipped with the following degrees of freedom 
\begin{flalign}
	&\	\bullet \mathbf{D}^1_{W}:  \text{the values of } q \text{ at the vertices of } K, \label{DW1}&\\
	&\	\bullet \mathbf{D}^2_{W}:  \text{the values of } q \text{ at }k-1 \text{ distinct points of } e, \label{DW2}&\\
	&\	\bullet \mathbf{D}^3_{W}:  \text{the face moments } \frac{1}{|f|}\int_f qp_{k-2} \d f,  \forall p_{k-2} \in P_{k-2}(f), \label{DW3}&\\ 
	&\	\bullet \mathbf{D}^4_{W}:  \text{the volume moments } \frac{1}{|K|}\int_K q p_m \d K,  \forall p_m \in P_m(K).&	\label{DW4}
\end{flalign}
\begin{remark}
	As established in \cite{Ahmad2013}, 
	the enlarged space $\widetilde{W}_k(K)$ is equipped with the following additional degrees of freedom:
	\begin{equation*}
		\bullet \widetilde{\mathbf{D}}^4_{W}:  \text{the volume moments } \frac{1}{|K|}\int_K q \widehat{p}_k \d K,  \forall \widehat{p}_k \in P_{k/m}(K).
	\end{equation*}
	And  the projection $\Pi^{\nabla, K}_k$ is computable from \eqref{DW1}-\eqref{DW4}. 
	The space $W_k(K)$ is then identified as the subspace of $\widetilde{W}_k(K)$ where the values of  $\widetilde{\mathbf{D}}^4_{W}$ are constrained by the projection $\Pi_k^{\nabla,K}$. Furthermore, the dimension of $W_k(K)$ is given by
	\begin{equation}\label{dimWk}
		\begin{aligned}
			\dim(W_k(K))=\dim(\widetilde{W}_k(K))-\# \widetilde{\mathbf{D}}_{W}^4= \dim(\mathbb{B}_k(\partial K))+ \dim(P_m(K))\\
			=N_v+(k-1)N_e+\dim(P_{k-2}(f))N_f+\dim(P_m(K)).
		\end{aligned}
	\end{equation}
\end{remark}
\begin{remark}\label{EquWk}
	We consider an equivalent characterization of the space $\widetilde{W}_k(K)$:
	\begin{equation}\label{WideWk}
		\begin{aligned}
			\widehat{{W}}_k(K) := \bigg\{q \in H^1(K) :&\left\{
			\begin{aligned}
				&\Delta q + C \in \widehat{P}_{k/0}(K) \text{ for some constant } C, \\
				& \int_K q \mathrm{d} K \in P_0(K),
			\end{aligned}
			\right. \
			q|_{\partial K} \in \mathbb{B}_k(\partial K)
			\bigg\}.
		\end{aligned}
	\end{equation}
	The corresponding local problem is given by \eqref{EquH1problem}, with data $j\in \widehat{P}_{k/0}(K)$, $g\in \mathbb{B}_k(\partial K)$ and an integral constraint. It is straightforward to verify that the same degrees of freedom $\mathbf{D}_W^1$--$\mathbf{D}_W^4$ and $\widetilde{\mathbf{D}}_W^4$ as for $\widetilde{W}_k(K)$ are also unisolvent on $\widehat{W}_k(K)$. Since the two spaces have the same dimension and the inclusion  $\widetilde{W}_k(K)\subset\widehat{W}_k(K)$ holds, we conclude that $\widehat{W}_k(K)=\widetilde{W}_k(K)$.
	As will be established in Theorem \ref{DisExactTheorem}, the function defined in \eqref{WideWk} serves as the divergence component in the local $\boldsymbol{H}(\operatorname{grad-div})$-conforming virtual element space we aim to construct.
\end{remark}
As usual, we use the $H^1$-seminorm projection operator $\Pi^{\nabla, K}_k$ to discretize $(\nabla q_h, \nabla r_h )_K$ with $q_h, r_h \in W_k(K)$ as follows:
\begin{equation}
	[q_h,r_h]_{n,K}:=(\nabla\Pi_k^{\nabla,K}q_h, \nabla\Pi_k^{\nabla,K}r_h)_K+S_n^K\left((I-\Pi_k^{\nabla,K})q_h, (I-\Pi_k^{\nabla,K})r_h\right),
\end{equation}
where $I$ is the identity operator. The stabilizing bilinear $S_n^k(\cdot, \cdot )$ is chosen to be computable and needs to satisfy  
\begin{equation}\label{sta`b1}
	|q_h|^2_{1, K}\lesssim S_n^K(q_h, q_h) \lesssim |q_h|^2_{1, K}, \quad \forall q_h \in W_k(K) \cap \ker(\Pi^{\nabla,K}_k).
\end{equation}
\begin{remark}\label{remark2}
	In this paper, for all $q_h,  r_h \in W_k(K)$, we consider the following local stabilization 
	\begin{equation}\label{stab1}
		S_n^K(q_h, r_h)=h_K^{-2}(\Pi^{0, K}_kq_h, \Pi^{0, K}_kr_h)_K+\sum_{f\in \partial K}\left[h_f^{-1}(\Pi^{0, f}_kq_h, \Pi^{0, f}_kr_h)_f+(q_h, r_h)_{ \partial f}\right],
	\end{equation}
	whose coercivity and continuity, as stated in \eqref{sta`b1}, can be verified by an argument analogous to that in \cite[Theorem 2]{VEMhp}. Moreover, the lower bound established in \eqref{stab1} remains valid for all $q_h\in W_k(K)$, regardless of whether $q_h$ belongs to the kernel of $\Pi_{k}^{\nabla,K}$.
	According to the computable projections $\Pi^{\nabla, f}_k$ and $\Pi^{\nabla, K}_k$ and the degrees of freedom \eqref{DW3} and \eqref{DW4}, the projections $\Pi^{0, f}_k$ and $\Pi^{0, K}_k$ are computable.
	Combining the fact that $q_h$ and  $r_h$ belong to $P_k(e)$ on each edge, along with the degrees of freedom \eqref{DW1} and \eqref{DW2}, we easily compute the last term of \eqref{stab1}.
	As a result, we obtain the computability of the stabilization \eqref{stab1}.
\end{remark} 
We also define the global space by
\begin{eqnarray*}
	W_k(\Omega):=\{ w\in H^1(\Omega): w|_{K}\in W_k(K),  \forall K \in \mathcal{T}_h\}.
\end{eqnarray*}
\subsection{$\boldsymbol{H}(\operatorname{curl})$-conforming virtual element space}
In this subsection, we introduce a $\boldsymbol{H}(\operatorname*{curl})$-conforming virtual element space.
The edge virtual element  space \cite{VEMforlowMaxwell,VEMforMaxwell} on each face $f$ is as follows:                                                          
\begin{equation*}
	\begin{aligned}
		\mathbb{E}_{0, r}(f):=\left\{\boldsymbol{\phi} \in\left[L^{2}(f)\right]^{2}: \operatorname{div} \boldsymbol{\phi} \in P_{0}(f),  \operatorname{rot} \boldsymbol{\phi} \in P_{r-1}(f), \right. \\
		\left.\boldsymbol{\phi} \cdot \boldsymbol{t}_{e} \in P_{0}(e), \forall e \in \partial f,  \int_{f} \boldsymbol{\phi} \cdot \boldsymbol{x}_f\d f=0 \right\}, \\
	\end{aligned}
\end{equation*}
where the $\operatorname*{rot}$ operator is defined as $ \operatorname{rot} \boldsymbol{\phi}= \frac{\partial \phi_2}{\partial x_1} - \frac{\partial \phi_1}{\partial x_2}$.
On the element $K$, the local edge space  with $r\ge 2$  is given by \cite{VEMforMaxwell}
\begin{equation*}
	\begin{aligned}
		\boldsymbol{\Sigma}_{0, r}(K):=\left\{\boldsymbol{\phi} \in \boldsymbol{L}^2(K): \operatorname{div} \boldsymbol{\phi} =0,  \operatorname{curl}\operatorname{curl} \boldsymbol{\phi} \in\boldsymbol{P}_{r-2}(K),  \right. \boldsymbol{\phi}_{\tau}|_f \in \mathbb{E}_{0,r}(f),\\
		\left.  \forall f \in \partial K, \boldsymbol{\phi} \cdot \boldsymbol{t}_{e} \text { continuous on each edge } e \in \partial K\right\}, 
	\end{aligned}
\end{equation*}
with the following degrees of freedom 
\begin{flalign} \label{edgeDof1}
	&\	\bullet \mathbf{D}^1_{\boldsymbol{\Sigma}}: \text{the edge moments } \frac{1}{|e|}\int_e \boldsymbol{\phi}\cdot \boldsymbol{t}_e \d e, &\\
	&\	\bullet \mathbf{D}^2_{\boldsymbol{\Sigma}}: \text{the face moments } \frac{1}{|f|} \int_{f} \operatorname{rot} \boldsymbol{\phi}_{\tau} \widehat{p}_{r-1} \d f ,  \forall \widehat{p}_{r-1}\in \widehat{P}_{r-1/0}(f),&\\
	\label{edgeDof3}
	&\	\bullet \mathbf{D}^3_{\boldsymbol{\Sigma}}: \text{the volume moments  } \frac{1}{|K|}\int_K \operatorname{curl} \boldsymbol{\phi}\cdot( \boldsymbol{x}_K\wedge\boldsymbol{p}_{r-2} ) \d K, &\\
	\nonumber &\quad \quad \quad \quad \quad \quad \quad \quad \quad \quad \quad 
	\quad \quad \quad \quad \quad \quad \quad \, \, \, \forall \boldsymbol{p}_{r-2} \in \boldsymbol{P}_{r-2}(K), 
\end{flalign}
where $\boldsymbol{x}_K:=\boldsymbol{x}-\boldsymbol{b}_K$ and $\boldsymbol{\phi}_{\tau}$ denotes the tangential component of $\boldsymbol{\phi}$ on face $f$, that is, $\boldsymbol{\phi}_{\tau}=(\boldsymbol{\phi}-(\boldsymbol{\phi}\cdot \boldsymbol{n}_f)\boldsymbol{n}_f)|_f$.
As for $r=1$, we consider the  local serendipity space \cite{VEMforlowMaxwell}
\begin{equation}\label{SerSigma}
	\begin{aligned}
		\boldsymbol{\Sigma}_{0, 1}(K):=\left\{\boldsymbol{\phi} \in \boldsymbol{L}^2(K): \operatorname{div} \boldsymbol{\phi} =0,  \operatorname{curl}\operatorname{curl} \boldsymbol{\phi} \in \boldsymbol{P}_0(K), \boldsymbol{\phi}_{\tau} \in \mathbb{E}_{0,1}(f), \forall f \in \partial K, \right. \\
		\left. \boldsymbol{\phi} \cdot \boldsymbol{t}_{e} \text { continuous on each edge } e \in \partial K, \int_K\operatorname{curl}\boldsymbol{\phi}\cdot( \boldsymbol{x}_K\wedge \boldsymbol{p}_0) \d K=0, \forall \boldsymbol{p}_0 \in \boldsymbol{P}_{0}(K)\right\}. 
	\end{aligned}
\end{equation}
This space is constructed by first imposing the condition $\curl\curl \boldsymbol{\phi} \in \boldsymbol{P}_0(K)$, followed by the introduction of additional degrees of freedom:
\begin{equation}
	\int_K\operatorname{curl}\boldsymbol{\phi}\cdot( \boldsymbol{x}_K\wedge \boldsymbol{p}_0) \d K, \forall \boldsymbol{p}_0 \in \boldsymbol{P}_{0}(K).
\end{equation}
These additional degrees of freedom are subsequently eliminated. Thus, the resulting space \eqref{SerSigma} is endowed only with degrees of freedom \eqref{edgeDof1} and contains the lowest-order N\'ed\'elec  elements of the first kind.
The dimension of $\boldsymbol{\Sigma}_{0,r}(K)$ is derived as in \cite{VEMforMaxwell}:
\begin{equation*}
	\begin{aligned}
		\dim(\boldsymbol{\Sigma}_{0,r}(K))= N_e+ (\dim(P_{r-1}(f))-1)N_f+\dim(\boldsymbol{P}_{r-2}(K)\cap\ker(\div)),
	\end{aligned}
\end{equation*}
which, combined with the vector polynomial identity \eqref{VectPolyIden}
$$\dim(\boldsymbol{P}_{r-2}(K)\cap\ker(\div))=\dim(\boldsymbol{P}_{r-1}(K))-\dim(\nabla P_{r}(K)),$$
yields
\begin{equation}\label{dimSigma}
	\dim(\boldsymbol{\Sigma}_{0,r}(K)) =N_e+(\dim(P_{r-1}(f))-1)N_f+3\dim(P_{r-1}(K))-\dim(P_{r}(K))+1.
\end{equation}
\par
Note that the  degrees of freedom \eqref{edgeDof1}-\eqref{edgeDof3} on each element $K$ allow us to compute the 
$L^2$-orthogonal projection operator from $\boldsymbol{\Sigma}_{0, r}(K)$ to $\boldsymbol{P}_0(K)$, see \cite{VEMforlowMaxwell}. Following Remark \ref{remark2}, we 
define a discrete $L^2$- inner product by 
\begin{equation*}
	[\boldsymbol{\phi}_h,\boldsymbol{\psi}_h]_{e,K}:=(\Pi_{0}^{0,K}\boldsymbol{\phi}_h, \Pi_{0}^{0,K}\boldsymbol{\psi}_h)_K+S_e^K((I-\Pi^{0,K}_{0})\boldsymbol{\phi}_h, (I-\Pi_{0}^{0,K})\boldsymbol{\psi}_h),
\end{equation*}
which leads to 
\begin{equation}\label{stab2}
	\|\boldsymbol{\phi}_h\|^2_{ K}\lesssim S_e^K(\boldsymbol{\phi}_h, \boldsymbol{\phi}_h) \lesssim\|\boldsymbol{\phi}_h\|^2_{ K}, \quad \forall \boldsymbol{\phi}_h\in \boldsymbol{\Sigma}_{0,r}(\Omega) \cap \ker(\Pi^{0,K}_0).
\end{equation}
\begin{remark}
	For all $\boldsymbol{\phi}_h$, $\boldsymbol{\psi}_h \in \boldsymbol{\Sigma}_{0,r}(\Omega)$, we  choose the following stabilization from \cite{VEMforGener} with $r\ge 2$:
	\begin{equation*}
		\begin{aligned}
			S_e^K(\boldsymbol{\phi}_h,\boldsymbol{\psi}_h) &=h_K^2(\Pi^0_{\wedge, r-2}\curl\boldsymbol{\phi}_h, \Pi^0_{\wedge, r-2}\curl\boldsymbol{\psi}_h)_K\\
			&\quad+\sum_{f\in\partial K}[h_f^3(\operatorname{curl}\boldsymbol{\phi}_h \cdot \nf, \operatorname{curl} \boldsymbol{\psi}_h \cdot \nf )_f+ \sum_{e\in \partial f} h_f^2(\boldsymbol{\phi}_h\cdot\boldsymbol{t}_{e}, \boldsymbol{\psi}_h\cdot \boldsymbol{t}_{e})_{e}],
		\end{aligned}
	\end{equation*}
	and in the special case  $r=1$, following \cite{VEMforlowMaxwell}, we set 
	\begin{equation*}
		S_e^K(\boldsymbol{\phi}_h,\boldsymbol{\psi}_h) =h_K^2\sum_{e\in\partial K}(\boldsymbol{\phi}_h\cdot\boldsymbol{t}_{e}, \boldsymbol{\psi}_h\cdot \boldsymbol{t}_{e})_e.
	\end{equation*}
	Here, the computable projection $\Pi^0_{\wedge, r-2}: \boldsymbol{L}^2(K) \to \boldsymbol{x}_K\wedge \boldsymbol{P}_{r-2}(K)$ is defined as 
	\begin{equation*}
		(\Pi^0_{\wedge, r-2}\boldsymbol{\phi}-\boldsymbol{\phi}, \boldsymbol{p}_{r-1})_K=0,\quad \forall \boldsymbol{\phi}\in\boldsymbol{\Sigma}_{0,r}(K),\quad \forall \boldsymbol{p}_{r-1}\in \boldsymbol{x}_K\wedge\boldsymbol{P}_{r-2}(K).
	\end{equation*}
\end{remark}
Finally, we define the global space
\begin{equation*}
	\boldsymbol{\Sigma}_{0, r}(\Omega):=\{ \boldsymbol{\phi} \in \boldsymbol{H}(\text{curl}; \Omega): \boldsymbol{\phi}|_K \in \boldsymbol{\Sigma}_{0, r}(K), \forall K \in \mathcal{T}_h\}.
\end{equation*}
\subsection{$\boldsymbol{H}(\operatorname{grad-div})$-conforming virtual element space}
The $\boldsymbol{H}(\operatorname{grad-div})$-conforming virtual element space is constructed in this subsection. 
We begin with the local enlarged space for $r\ge 2$ and $k\ge 1$:
\begin{equation}\label{EnlargedV}
	\begin{aligned}
		\widetilde{\boldsymbol{V}}_{r-1,k+1}(K):=\bigg\{\boldsymbol{v} \in \boldsymbol{V}(K): 
		&\left\{
		\begin{aligned}
			&(\div \boldsymbol{v})|_{\partial K}\in \mathbb{B}_k(\partial K),\\
			&(\boldsymbol{v} \cdot \boldsymbol{n}_{\partial K})|_{\partial K}\in P_{r-1}(\partial K),
		\end{aligned}
		\right.\\
		&\left\{ \begin{aligned}
			&(\nabla\div)^2  \boldsymbol{v} \in \nabla P_{k}(K) , \\
			& \curl  \boldsymbol{v}\in \boldsymbol{P}_{r-2}(K)
		\end{aligned} \right. 
		\bigg\},
	\end{aligned}
\end{equation}
where $P_{r-1}(\partial K):=\{ v\in L^2(\partial K): v|_f \in P_{r-1}(f), \forall f \in \partial K\}$.
Its well-posedness follows directly from the well-posedness of the quad-div problem \eqref{NonHomQuadDiv} on the polyhedron $K$, with the following given data:
\begin{align*}
	\boldsymbol{f}^d\in\nabla P_k(K), \boldsymbol{f}^c \in \boldsymbol{P}_{r-2}(K)\cap(\ker(\div)), 
	(h,g)\in P_{r-1}(\partial K)\times \mathbb{B}_k(\partial K)\subset V(\partial K).
\end{align*}
Combining the dimension results from \eqref{dimWk} and \eqref{dimSigma}, we obtain the dimension of $\widetilde{\boldsymbol{V}}_{r-1,k+1}(K)$ as follows:
\begin{equation}\label{dimEnlargedV}
	\begin{aligned}
		\dim(\widetilde{\boldsymbol{V}}_{r-1,k+1}(K))&= \dim(\mathbb{B}_k(\partial K) + \dim(P_{r-1}(\partial K))+ \dim(\nabla P_k(K))+ \dim(\boldsymbol{P}_{r-2}(K)\cap(\ker(\div))) \\
		&=N_v+(k-1)N_e+(\dim(P_{k-2}(f))+\dim(P_{r-1}(f)))N_f+ \dim(P_k(K))-1 \\
		&\quad +3\dim(P_{r-1}(K))-\dim(P_{r}(K))+1.
	\end{aligned}
\end{equation}
Furthermore, for $r = 1$, a similar serendipity space is defined to satisfy the inclusion $\curl \boldsymbol{\Sigma}_{0,1}(K) \subset \widetilde{\boldsymbol{V}}_{0,k+1}(K)$:
\begin{equation}\label{Vr0}
	\begin{aligned}
		\widetilde{{\boldsymbol{V}}}_{0,k+1}(K):=\left\{ \v \in \widetilde{\boldsymbol{V}}_{r-1,k+1}(K): (\v\cdot \boldsymbol{n}_{\partial K})|_{\partial K} \in P_0(\partial K),\curl \v \in P_0(K),\right.\\ \left.\quad \int_K\v \cdot( \boldsymbol{x}_K\wedge \boldsymbol{p}_0) \d K=0, \forall \boldsymbol{p}_0 \in \boldsymbol{P}_{0}(K)\right\}.
	\end{aligned}
\end{equation}
As will be shown in Proposition \ref{UniProp}, its dimension coincides with that given by \eqref{dimEnlargedV} for the case $r=1$. 
\par 
From Remark \ref{EquWk}, we note that the quad-div problem defined in the enlarged space $\widetilde{\boldsymbol{V}}_{r-1,k+1}(K)$ corresponds to the Poisson problem in $\widetilde{W}_k(K)$, satisfying $\operatorname{div} \widetilde{\boldsymbol{V}}_{r-1,k+1}(K) \subset \widetilde{W}_k(K)$.  To ensure that the inclusion  $\operatorname{div} \boldsymbol{V}_{r-1,k+1}(K) \subset W_k(K)$ holds for the final restricted space with $r \ge 1$ and $k \ge 1$, we define
\begin{equation}\label{FinalSpace}
	\boldsymbol{V}_{r-1,k+1}(K):=\left\{ \v \in \widetilde{\boldsymbol{V}}_{r-1,k+1}(K): (\div \v-\Pi^{\nabla, K}_k \div \v, \widehat{p}_k )_K=0,   \forall  \widehat{p}_k \in \widehat{P}_{k/m}(K)\right\},
\end{equation}
where $m=\max\{0,k-2\}$. The computability of $\Pi_{k}^{\nabla,K} \operatorname{div}$ follows from Proposition \ref{L2Prop}. 
This inclusion is achieved by imposing the restriction in \eqref{FinalSpace}, which constitutes a divergence analogue of \eqref{WFinal}. 
\begin{proposition}\label{UniProp}
	For $r$, $k\ge 1$, the dimension of $\boldsymbol{V}_{r-1,k+1}(K)$ is given by
	\begin{equation}\label{DimVrk}
		\begin{aligned}
			\dim (\boldsymbol{V}_{r-1,k+1}(K) )
			&=N_v+(k-1)N_e+(\dim(P_{k-2}(f))+\dim(P_{r-1}(f)))N_f+ \dim(P_{m}(K)) \\
			&\quad +3\dim(P_{r-1}(K))-\dim(P_{r}(K)),
		\end{aligned}
	\end{equation}
	where $m=\max\{0,k-2\}$.
	Furthermore, the following degrees of freedom
	\begin{flalign}\label{dofV1}
		&\	\bullet \mathbf{D}^1_{\boldsymbol{V}}: \text{the values of }  \operatorname{div} \boldsymbol{v} \text{ at the vertices of } K, &\\
		\label{dofV2}
		&\ \bullet \mathbf{D}^2_{\boldsymbol{V}}:\text{the values of }\operatorname{div} \boldsymbol{v} \text{ at k-1 distinct points of every edge of } K,&\\
		\label{dofV3}
		&\ \bullet \mathbf{D}^3_{\boldsymbol{V}}:\text{the face moments } \frac{1}{|f|}\int_f \operatorname{div} \boldsymbol{v} p_{k-2} \d f,  \forall p_{k-2} \in P_{k-2}(f), & \\
		\label{dofV4}
		&\ \bullet \mathbf{D}^4_{\boldsymbol{V}}: \text{the face moments  } \frac{1}{|f|}\int_{f} \boldsymbol{v} \cdot \boldsymbol{n}_{f} p_{r-1} \d f, \forall p_{r-1} \in P_{r-1}(f), &\\
		\label{dofV5}
		&\ \bullet \mathbf{D}^5_{\boldsymbol{V}}: \text{the volume moments } \frac{1}{|K|}\int_K \boldsymbol{v} \cdot \boldsymbol{p}_{k-3} \d K,  \forall \boldsymbol{p}_{k-3} \in \nabla P_{k-2}(K),&\\
		\label{dofV6}
		&\ \bullet \mathbf{D}^6_{\boldsymbol{V}}:\text{the volume moments } 
		\frac{1}{|K|}\int_K \boldsymbol{v} \cdot (\boldsymbol{x}_K\wedge \boldsymbol{p}_{r-2}) \d K,  \forall \boldsymbol{p}_{r-2} \in \boldsymbol{P}_{r-2}(K),
	\end{flalign}
	are unisolvent in $\Vrk$. 
\end{proposition}

\begin{proof} 
	Note that the serendipity space $\widetilde{\boldsymbol{V}}_{0,k+1}(K)$ is initially equipped with the extra degrees of freedom
	\begin{equation*}
		\widetilde{\mathbf{D}}^6_{\boldsymbol{V}}: \text{the volume moments }  \int_{K} \boldsymbol{v} \cdot (\boldsymbol{x}_K \wedge \boldsymbol{p}_{0}) \d K,  \forall \boldsymbol{p}_0 \in \boldsymbol{P}_0(K).
	\end{equation*}
	However, by the very definition of \eqref{Vr0}, these moments are identically zero. Therefore, for simplicity, we let $\mathbf{D}^6_{\boldsymbol{V}}$ (for $r=1$) denote the vanishing form of $\widetilde{\mathbf{D}}^6_{\boldsymbol{V}}$.
	By a standard argument for reducing enlarged virtual element spaces to their restricted counterparts \cite{Ahmad2013}, we only need to establish the unisolvence of the degrees of freedom \eqref{dofV1}–\eqref{dofV6} along with the moments
	\begin{equation*}
		\widetilde{\mathbf{D}}^5_{\boldsymbol{V}}: \text{the volume moments } \int_{K} \boldsymbol{v} \cdot \widehat{\boldsymbol{p}}_{k-1} \d K,  \forall \widehat{\boldsymbol{p}}_{k-1} \in \nabla P_{k}(K)/\nabla P_{k-2}(K).
	\end{equation*}
	in the enlarged space $\widetilde{\boldsymbol{V}}_{r-1,k+1}(K)$.
	\par 
	We begin the proof by counting the number of degrees of freedom:
	\begin{gather*}
		\# \mathbf{D}^1_{\boldsymbol{V}}= N_v,\quad \# \mathbf{D}^2_{\boldsymbol{V}}=(k-1)N_e,\quad \# \mathbf{D}^3_{\boldsymbol{V}}=\text{dim}(P_{k-2}(f))N_f\\
		\# \mathbf{D}^4_{\boldsymbol{V}}=\text{dim}(P_{r-1}(f))N_f,\quad \# \mathbf{D}^5_{\boldsymbol{V}}+\# \widetilde{\mathbf{D}}^5_{\boldsymbol{V}}=(\text{dim}(P_{k}(K))-1),\\
		\mathbf{D}^6_{\boldsymbol{V}}=3\text{dim}(P_{r-1}(K))-\dim(P_r(K))+1.
	\end{gather*}
	The total number is then seen to match the dimension of $\widetilde{\boldsymbol{V}}_{r-1, k+1}(K)$ given in \eqref{dimEnlargedV}.  Moreover, we observe that the dimension in \eqref{DimVrk} is obtained by subtracting from \eqref{dimEnlargedV} the number of additional moments $\# \widetilde{\mathbf{D}}^5_{\boldsymbol{V}}$, which is given by $\dim(P_{k}(K)) - \dim(P_m(K))$. It remains to show that if all degrees of freedom \eqref{dofV1}–\eqref{dofV6} vanish for any $\boldsymbol{v} \in \widetilde{\boldsymbol{V}}_{r-1, k+1}(K)$, then $\boldsymbol{v}$ is identically zero. 
	\par 
	By definition, $(\operatorname{div} \boldsymbol{v})|_{\partial K} \in \mathbb{B}_k(\partial K)$. The vanishing of $\mathbf{D}^i_{\boldsymbol{V}}(\boldsymbol{v})$ for $i=1,2,3$, together with the unisolvence of $\mathbb{B}_k(\partial K)$, then allows us to conclude that
	\begin{equation}\label{divv0}
		\div \v =0 \quad \text{on } \partial K.
	\end{equation}
	Note that $\boldsymbol{v} \cdot \boldsymbol{n}_f \in P_{r-1}(f) $ on each face $f\in \partial K$,  $\mathbf{D}^4_{\boldsymbol{V}}(\v)=0$ implies 
	\begin{equation}\label{vn0}
		\boldsymbol{v} \cdot \boldsymbol{n}_{\partial K}=0 \quad \text{on } \partial K.
	\end{equation}
	On the element $K$, the homogeneous boundary conditions \eqref{divv0} and \eqref{vn0} lead to 
	\begin{align*}
		|\div \v|_{1,K}^2&=\int_K \nabla\div \v \cdot\nabla\div \v \d K\\
		&=-\int_K \div \v \Delta\div \v \d K +\int_{\partial K} \div \v \frac{\partial \div \v }{\partial \boldsymbol{n}_{\partial K}} \d s\\
		& =\int_K \v \cdot (\nabla\div)^2 \v \d K -\int_{\partial K} \v \cdot\boldsymbol{n}_{\partial K} \Delta\div \v \d s \\
		& =\int_K \v \cdot (\nabla\div)^2 \v \d K
	\end{align*}
	Combined  with the definition of \eqref{EnlargedV} and  $\mathbf{D}^{5}_{\boldsymbol{V}}(\boldsymbol{v})=\widetilde{\mathbf{D}}^5_{\boldsymbol{V}}(\boldsymbol{v})=0$, there exists $p_k\in P_k(K)$ such that 
	\begin{equation*}
		\begin{aligned}
			|\div \boldsymbol{v}|_{1,K}^2 = \int_K \v \cdot \nabla p_K \d K=0.
		\end{aligned}
	\end{equation*}
	Then, applying the Poincar\'{e} inequality under the homogeneous boundary condition \eqref{divv0} yields
	\begin{equation}\label{div0}
		\operatorname{div} \boldsymbol{v}=0 \quad \text{in }  K.
	\end{equation}
	From the exactness of \eqref{ContiComplex},  there exists a function $\boldsymbol{\phi} \in \boldsymbol{H}_0(\operatorname{curl};K)$ such that 
	\begin{equation*}
		\boldsymbol{v}=\operatorname{curl}\boldsymbol{\phi} \quad \text{in } K.
	\end{equation*} 
	By the definition of spaces $\boldsymbol{V}_{r-1,k+1}(K)$ and $\boldsymbol{\Sigma}_{0,r}(K)$, we have $\boldsymbol{\phi} \in \boldsymbol{\Sigma}_{0,r}(K)$ and $\mathbf{D}^1_{\boldsymbol{\Sigma}}(\boldsymbol{\phi})=\mathbf{D}^2_{\boldsymbol{\Sigma}}(\boldsymbol{\phi})=0$. Given $\mathbf{D}^6_{\boldsymbol{V}}(\boldsymbol{v})=0$,  we get $\mathbf{D}^3_{\boldsymbol{\Sigma}}(\boldsymbol{\phi})=\mathbf{D}^6_{\boldsymbol{V}}(\boldsymbol{v})=0$, which  implies  $\boldsymbol{\phi} =0$. 
	The proof is complete.
\end{proof}
\begin{remark}\label{PolyCompl}
	From the polynomial decomposition \eqref{Pkdc2}, we can easily find that
	\begin{equation*}
		\begin{aligned}
			\boldsymbol{P}_l(K) = \operatorname*{curl}\boldsymbol{P}_{l+1}(K)\oplus\boldsymbol{x}P_{l-1}(K) &\subset \operatorname{curl}\boldsymbol{P}_{r}(K)\oplus \boldsymbol{x}P_{l-1}(K)  \subset \boldsymbol{V}_{r-1, k+1}(K), \\
			P_k(K)&\subset\operatorname{div}\boldsymbol{V}_{r-1, k+1}(K),  
		\end{aligned}
	\end{equation*}
	where $l$=min$\{r-1, k+1\}$. Then taking $r=k$, $r=k+1$ and $r=k+2$, we have $\boldsymbol{P}_{k-1}(K)\subset\boldsymbol{V}_{k-1, k+1}(K)$, $\boldsymbol{P}_{k}(K)\subset\boldsymbol{V}_{k, k+1}(K)$ and  $\boldsymbol{P}_{k+1}(K)\subset\boldsymbol{V}_{k+1, k+1}(K)$, respectively. 
\end{remark}
\begin{proposition}\label{L2Prop}
	According to the degrees of freedom \eqref{dofV1}-\eqref{dofV6}, 
	the projections $\Pi_{k}^{\nabla,K}\div: \boldsymbol{V}_{r-1,k+1}(K)\to P_{k}(K)$ and $ \Pi_{l}^{0, K}: \boldsymbol{V}_{r-1, k+1}(K) \to \boldsymbol{P}_{l}(K)$ with $l=\min\{r-1,k-1\}$ are computable.
\end{proposition}
\begin{proof}
	For any $p_k \in P_k(K)$, it holds that 
	\begin{align*}
		(\nabla \Pi^{\nabla,K}_k \div \v , \nabla p_k)_K&=(\nabla\div \v ,\nabla p_k)_K\\
		&=-\int_K \div \v \,  \Delta p_k \d K + \int_{\partial K}\div \v \,\frac{\partial p_k}{\partial \boldsymbol{n}_{\partial K}} \d s \\
		&=\int_K \v \cdot \nabla\Delta p_k \d K -\int_{\partial K} \v\cdot \boldsymbol{n}_{\partial K} \Delta p_k \d s+  \int_{\partial K}\div \v \,\frac{\partial p_k}{\partial \boldsymbol{n}_{\partial K}} \d s.
	\end{align*}
	The first two terms are computable directly from \eqref{dofV4} and \eqref{dofV5}, respectively. For the last term, we can compute it based on the definition of $\mathbb{B}_k(\partial K)$, along with the degrees of freedom given in \eqref{dofV1}--\eqref{dofV3}.
	\par 
	Using the decomposition \eqref{Pkdc1}, for any given $\boldsymbol{q}_{l} \in \boldsymbol{P}_{l}(K)$, there exist three polynomials $\widehat{q}_{l+1}\in \widehat{P}_{l+1/m}(K)$, $q_{m}\in P_{m}(K)$ and $\boldsymbol{q}_{l-1}\in\boldsymbol{P}_{l-1}(K)$ with $m=\max\{0,k-2\}$, such that 
	\begin{equation*}
		\boldsymbol{q}_{l} = \nabla \widehat{q}_{l+1}+ \nabla q_m + \boldsymbol{x}_K\wedge \boldsymbol{q}_{l-1}.
	\end{equation*}
	From the definition of projection $\Pi_{l}^{0, K}$ and integration by parts, for any $\boldsymbol{v}\in \Vrk$, we have 
	\begin{equation}\label{ProjV}
		\begin{aligned}
			(\Pi_{l}^{0, K}\boldsymbol{v}, \boldsymbol{q}_{l})_K&=(\boldsymbol{v}, \boldsymbol{q}_{l})_K=(\boldsymbol{v}, \nabla \widehat{q}_{l+1})_K+(\boldsymbol{v}, \nabla q_m)_K+(\v,\boldsymbol{x}_K\wedge \boldsymbol{q}_{l-1})_K\\
			&=-\int_K \operatorname{div} \boldsymbol{v} \widehat{q}_{l+1} \d K+\int_{\partial K}\boldsymbol{v}\cdot \boldsymbol{n}_{\partial K}  \widehat{q}_{l+1} \d s+ \int_K \boldsymbol{v} \cdot \nabla q_m \d K + \int_K \boldsymbol{v} \cdot (\boldsymbol{x}_K\wedge\boldsymbol{q}_{l-1}) \d K\\
			&=-\int_K (\Pi^{\nabla, K}_{l+1} \operatorname{div} \boldsymbol{v} )\widehat{q}_{l+1} \d K +\sum_{f\in \partial K} \int_f \boldsymbol{v}\cdot \boldsymbol{n}_f  \widehat{q}_{l+1} \d f+\int_K \boldsymbol{v} \cdot \nabla q_m \d K \\
			&\quad + \int_K \boldsymbol{v} \cdot (\boldsymbol{x}_K\wedge\boldsymbol{q}_{l-1}) \d K.
		\end{aligned}
	\end{equation}
	According to the computable $\Pi_{k}^{\nabla,K}\div$ and  the degrees of freedom \eqref{dofV4}-\eqref{dofV6}, we can compute the terms on the right-hand side of \eqref{ProjV}.
\end{proof}
Gluing the local space $\boldsymbol{V}_{r-1, k+1}(K)$ over all elements $K$ in $\mathcal{T}_h$ produces the global space
\begin{equation}
	\boldsymbol{V}_{r-1, k+1}(\Omega):=\{ \boldsymbol{v} \in \boldsymbol{V}( \Omega): \boldsymbol{v}|_K \in \boldsymbol{V}_{r-1, k+1}(K), \forall K \in \mathcal{T}_h\}.
\end{equation}
We note that the global set of degrees of freedom, defined as the counterpart of \eqref{dofV1}–\eqref{dofV4}, guaranties the conforming property $\div \boldsymbol{V}_{r-1,k+1}(\Omega) \subset H^1(\Omega)$.

\subsection{The discrete complex}
\begin{theorem}\label{DisExactTheorem}
	The discrete complex \eqref{discomplex} is exact.
\end{theorem}
\begin{proof}
	The exactness of the first two components of the discrete complex is established in \cite{VEMforMaxwell,VEMforlowMaxwell}:
	\begin{equation*}
		\nabla U_{1}(\Omega) = \operatorname{ker}(\operatorname{curl})\cap \boldsymbol{ \Sigma}_{0,r}(\Omega) \quad \text{and}\quad \operatorname{curl} \boldsymbol{ \Sigma}_{0,r}(\Omega) = \operatorname{ker (div)} \cap \boldsymbol{ V}_{r-1,k+1}(K).
	\end{equation*}
	Then, it remains to prove the  exactness of the final component:
	\begin{equation}\label{SurDiv}
		\operatorname{div}\boldsymbol{V}_{r-1,k+1}(\Omega) = W_k(\Omega).
	\end{equation}
	Due to the compatibility with the div operator inherent in the definitions \eqref{WFinal} and \eqref{FinalSpace}, establishing the global exact sequence \eqref{SurDiv} requires only proving the exactness of the local enlarged spaces $\widetilde{\boldsymbol{V}}_{r-1,k+1}(K)$ and $\widetilde{\boldsymbol{W}}_k(K)$. The global result is subsequently obtained by patching these local constructions.
	\par 
	According to Remark \ref{EquWk}, any $w^K \in \widetilde{W}_k(K)$ is the solution of the Poisson equation \eqref{EquH1problem} with data $j \in \widehat{P}_{k/0}(K)$, $g_1 \in \mathbb{B}_k(\partial K)$, and the constraint $\int_K w^K \d K \in P_0(K)$.
	Similarly, any $\boldsymbol{u}^{K} \in \boldsymbol{V}_{r-1,k+1}(K)$ is the unique solution to the local quad-div problem \eqref{NonHomQuadDiv} on $K$, with data  $$\boldsymbol{f}^d \in \nabla P_k(K), \quad  \boldsymbol{f}^c \in \boldsymbol{P}_{r-2}(K)\cap\ker(\div), \quad (h, g_2) \in (P_{r-1}(\partial K),\mathbb{B}_k(\partial K)) \subset Y(\partial K).$$
	Therefore, by setting $\boldsymbol{f}^d = \nabla j$, $g_2 = g_1$, and  $\int_{\partial K} h  \d s=\int_K w^K \d K$, Theorem \ref{NonHomTheorem} implies that
	\begin{equation*}
		\div \boldsymbol{u}^{K} = w^{K},
	\end{equation*}
	from which it follows that
	\begin{equation*}
		\div\widetilde{\boldsymbol{V}}_{r-1,k+1}(K) = \widetilde{W}_k(K).
	\end{equation*}
\end{proof}
\par 
For $s>\frac{1}{2}$,  we introduce interpolation operators defined via the degrees of freedom of their respective virtual element spaces:
\begin{align}
	\widehat{J}_h: H^{s+1}(\Omega) \to U_1(\Omega),\\
	\boldsymbol{I}_h^e:\boldsymbol{H}^s(\operatorname{curl};\Omega)\to \boldsymbol{\Sigma}_{0,r}(\Omega),\\
	\boldsymbol{I}_h: \boldsymbol{V}^s(\Omega)\to \boldsymbol{V}_{r-1,k+1}(\Omega),\\
	J_h:W^s(\Omega)= H^{1+s}(\Omega) \to W_k(\Omega),
\end{align}
where $\boldsymbol{H}^s(\curl;\Omega):=\{\v \in \boldsymbol{H}^s(\Omega):\curl \v \in \boldsymbol{H}^s(\Omega)\}$ and $\boldsymbol{V}^s(\Omega):=\{ \v \in\boldsymbol{H}^s(\Omega):\div \v \in H^{s+1}(\Omega)\}.$
The continuous embedding $H^{s+1}(\Omega) \hookrightarrow C^{0,s-\frac{1}{2}}(\Omega)$  and the trace theorem \eqref{Trace0} ensure that $\widehat{J}_h$, $\boldsymbol{I}_h^e$, $\boldsymbol{I}_h$, and $J_h$,  are well-defined.  
\begin{remark}
	It follows from the regularity result in Remark \ref{EquAndReg} that if $\boldsymbol{f} \in \boldsymbol{H}^{s-2}(\Omega) \cap \ker(\curl)$ with $s > \frac{1}{2}$, then the solution $\boldsymbol{u}$ to the quad-div problem \eqref{mixForm} possesses the regularity $\boldsymbol{u} \in \boldsymbol{V}^s(\Omega)$, and thus the interpolant $\boldsymbol{I}_h \boldsymbol{u}$ is well-defined.
\end{remark}
\begin{proposition}
	The last two rows of the following diagram are commutative.
	\begin{equation}\label{CommutDigram}
		\begin{tikzcd}
			\mathbb{R} \arrow[r, "\subset"] & H^1(\Omega) \arrow[r, "\nabla"] \arrow[d,"{\rotatebox{90}{$\subset$}}"] & \boldsymbol{H}(\operatorname{curl}; \Omega) \arrow[r, "\operatorname{curl}"] \arrow[d,"{\rotatebox{90}{$\subset$}}"] & \boldsymbol{V}(\Omega) \arrow[r, "\operatorname{div}"] \arrow[d,"{\rotatebox{90}{$\subset$}}"] & H^1(\Omega)\arrow[r] \arrow[d,"{\rotatebox{90}{$\subset$}}"] & 0 
			\\
			\mathbb{R} \arrow[r, "\subset"] & H^{s+1}(\Omega) \arrow[r, "\nabla"] \arrow[d, "\widehat{J}_h"] & \boldsymbol{H}^s(\curl;\Omega) \arrow[r, "\operatorname{curl}"] \arrow[d, "\boldsymbol{I}_h^e"] & \boldsymbol{V}^s( \Omega) \arrow[r, "\operatorname{div}"] \arrow[d, "\boldsymbol{I}_h"] & H^{s+1}(\Omega) \arrow[r] \arrow[d, "J_h"] & 0 
			\\
			\mathbb{R} \arrow[r, "\subset"] & U_1(\Omega) \arrow[r, "\nabla"] & \boldsymbol{\Sigma}_{0,r}(\Omega) \arrow[r, "\operatorname{curl}"] & \boldsymbol{V}_{r-1,k+1}(\Omega) \arrow[r, "\operatorname{div}"] & W_k(\Omega) \arrow[r] & 0 .
		\end{tikzcd}
	\end{equation}
\end{proposition}

\begin{proof}
	For any $q\in U^s(\Omega)$, by the definition of $\mathbf{D}^1_{U}$ and $\mathbf{D}^1_{\boldsymbol{\Sigma}}$, there holds
	\begin{align*}
		\int_e	\nabla \widehat{J}_h q \cdot \boldsymbol{t}_e\d e &= \widehat{J}_hq(v_2)-\widehat{J}_hq(v_1)
		=q(v_2)-q(v_1)\\
		&=\int_e \nabla q \cdot \boldsymbol{t}_e \d e
		= \int_e \boldsymbol{I}_h^e \nabla q \cdot \boldsymbol{t}_e \d e,
	\end{align*}
	where $v_1$ and $v_2$ are the endpoints of edge $e$ with $\overrightarrow{v_1v_2} = \boldsymbol{t}_e$. This implies that $\mathbf{D}^1_{\boldsymbol{\Sigma}}(\nabla \widehat{J}_h q)= \mathbf{D}^1_{\boldsymbol{\Sigma}}(\boldsymbol{I}_h^e \nabla q)$. Combined with the identities $\mathbf{D}^2_{\boldsymbol{\Sigma}}(\nabla \widehat{J}_h q)=\mathbf{D}^2_{\boldsymbol{\Sigma}}(\boldsymbol{I}_h^e \nabla q)=\mathbf{D}^3_{\boldsymbol{\Sigma}}(\nabla \widehat{J}_h q)=\mathbf{D}^3_{\boldsymbol{\Sigma}}(\boldsymbol{I}_h^e \nabla q)=0$ for $r\ge 2$, the unisolvence of the degrees of freedom for $\boldsymbol{\Sigma}_{0,r}(K)$ yields
	\begin{equation}\label{nablacom}
		\nabla \widehat{J}_h q	=\boldsymbol{I}_h^e \nabla q \quad \text{in } K.
	\end{equation}
	For any $\boldsymbol{\phi}\in \boldsymbol{H}^s(\curl;\Omega)$, the definitions of $\mathbf{D}^4_{\boldsymbol{V}}$, $\mathbf{D}^1_{\boldsymbol{\Sigma}}$, $\mathbf{D}^2_{\boldsymbol{\Sigma}}$, and the polynomial decomposition
	\begin{equation*}
		q_{r-1} = \widehat{q}_{r-1} + q_0, \quad \forall q_{r-1}\in P_{r-1}(K), \widehat{q}_{r-1}\in \widehat{P}_{r-1/0}(K)(K), q_0 \in P_0(K),
	\end{equation*}
	yield
	\begin{align*}
		(\operatorname{curl}\boldsymbol{I}^e_h \boldsymbol{ \phi}\cdot \boldsymbol{n}_f,q_{r-1})_f 
		&=(\operatorname{rot}(\boldsymbol{I}^e_h \boldsymbol{ \phi})_{\tau},\widehat{q}_{r-1})_f +(\operatorname{rot}(\boldsymbol{I}^e_h \boldsymbol{ \phi})_{\tau},q_{0})_f\\
		&=(\operatorname{rot} \boldsymbol{ \phi}_{\tau},\widehat{q}_{r-1})_f + \int_{\partial f} (\boldsymbol{I}^e_h \boldsymbol{ \phi})_{\tau}\cdot \boldsymbol{ t}_{\partial f} q_0\d s \\
		&=(\operatorname{rot} \boldsymbol{ \phi}_{\tau},\widehat{q}_{r-1})_f + \int_{\partial f}  \boldsymbol{ \phi}_{\tau}\cdot \boldsymbol{ t}_{\partial f}q_0 \d s \\
		&= (\operatorname{rot} \boldsymbol{ \phi}_{\tau}, q_{r-1})_f\\
		&=(\boldsymbol{I}_h\operatorname{curl} \boldsymbol{ \phi}\cdot \boldsymbol{n}_f,q_{r-1})_f,
	\end{align*}
	implying 
	\begin{equation*}
		(\operatorname{curl}\boldsymbol{I}^e_h\boldsymbol{\phi}-\boldsymbol{I}_h\operatorname{curl}\boldsymbol{\phi})|_{\partial K} = 0.
	\end{equation*}
	Furthermore, applying integration by parts yields $\mathbf{D}^5_{\boldsymbol{V}}(\operatorname{curl}\boldsymbol{I}^e_h \boldsymbol{\phi}-\boldsymbol{I}_h\operatorname{curl} \boldsymbol{\phi})=0$. 
	The definition of $\mathbf{D}_{\boldsymbol{\Sigma}}^3$ implies that $\mathbf{D}^6_{\boldsymbol{V}}(\operatorname{curl}\boldsymbol{I}^e_h \boldsymbol{ \phi}) =\mathbf{D}^6_{\boldsymbol{V}}(\boldsymbol{I}_h\operatorname{curl}\boldsymbol{ \phi})$. Then, all degrees of freedom \eqref{dofV1}--\eqref{dofV6} for the difference $\operatorname{curl}\boldsymbol{I}_h^e \boldsymbol{\phi}-\boldsymbol{I}_h\operatorname{curl}\boldsymbol{\phi}$ vanish, which yields 
	\begin{equation}\label{CurlCom}
		\operatorname{curl}\boldsymbol{I}_h^e \boldsymbol{\phi} = \boldsymbol{I}_h\operatorname{curl}\boldsymbol{\phi} \quad \text{in } K.
	\end{equation}
	For any $\boldsymbol{v} \in \boldsymbol{V}^s(\Omega)$, using the face degrees of freedom of the operators $\boldsymbol{I}_h$ and $J_h$, together with the unisolvence of $\mathbb{B}_k(\partial K)$, we obtain
	\begin{equation}\label{commut1}
		(\operatorname{div} \boldsymbol{I}_h\boldsymbol{v} -J_h\operatorname{div}\boldsymbol{v})|_{\partial K} =0.
	\end{equation}
	From the degrees of freedom $\mathbf{D}_{\boldsymbol{V}}^4$, $\mathbf{D}_{\boldsymbol{V}}^5$, and $\mathbf{D}_{W}^4$, it follows that
	\begin{equation}\label{commut2}
		\begin{aligned}
			(\operatorname{div}\boldsymbol{I}_h \boldsymbol{v}, q_m)_K&=  -(\boldsymbol{I}_h \boldsymbol{v}, \nabla q_m)_K + \int_{\partial K} \boldsymbol{I}_h\boldsymbol{v} \cdot \boldsymbol{n}_{\partial K} q_m \d s\\
			&=-(\boldsymbol{v}, \nabla q_m)_K +\int_{\partial K} \boldsymbol{v}\cdot \boldsymbol{n}_{\partial K} q_m \d s \\
			&=(\operatorname{div}\boldsymbol{v}, q_m)_K\\
			&=(J_h\operatorname{div}\boldsymbol{v}, q_m)_K, \quad \forall q_m\in P_m(K),  m=\operatorname{max}\{0, k-2\}.
		\end{aligned}
	\end{equation}
	Recalling the unisolvence of $W_k(K)$, and combining \eqref{commut1} and \eqref{commut2},  we obtain
	\begin{equation}\label{commut}
		\operatorname{div}\boldsymbol{I}_h \boldsymbol{v}= J_h \operatorname{div}\boldsymbol{v}\quad \text{in } K.
	\end{equation}
	Thus, \eqref{nablacom}, \eqref{CurlCom}, and \eqref{commut} give the desired commutativity.
\end{proof}
\begin{remark}
	The $\boldsymbol{H}(\operatorname{grad-div})$-conforming virtual element spaces proposed here generalize the three finite element families in \cite{ZZMQuadDiv} (with $r = k$, $k+1$, $k+2$) to polyhedral meshes. For the lowest-order case $r=k=1$, our virtual element degrees of freedom, defined for general polyhedra (see Figure \ref{fig:dofgraddiv}), recover exactly those of the discrete finite element complex in \cite{ZZMQuadDiv} on simplicial and cuboid meshes.
\end{remark}
\begin{figure}
	\centering
	\includegraphics[width=1.0\linewidth, trim=0 3cm 0 3cm, clip]{GradDivDofComplex.png}
	\caption{The lowest-order ($r = k = 1$) virtual element complex \eqref{discomplex} on a polyhedral element.}
	\label{fig:dofgraddiv}
\end{figure}
\subsection{Interpolation and stability results}
As noted in Remark \ref{PolyCompl}, the polynomial compatibility depends on the choice of $r$.
For ease of discussion, we focus on the case of  $r = k$ in the following, as it minimizes the degrees of freedom. Similar arguments can be used for the other cases. In this subsection, we give the interpolation error estimates and the stability result for the local space $\boldsymbol{V}_{k-1,k+1}(K)$.
\par 
To establish these results, we impose the following additional geometric assumption on mesh faces:
\begin{itemize}
	\item  Each face $f$ is convex, and there exists a constant $\epsilon>0$ such that every interior angle $\theta$ of $f$ satisfies $\epsilon \le \theta \le \pi -\epsilon $.
\end{itemize}
This convexity condition permits a shape-regular simplicial subdivision of each face or element, which guaranties the validity of certain polynomial inverse estimates; see \cite[Remark 1]{VEMforGener} for details.
We list several such estimates. First, the standard polynomial inverse estimate \cite[Section 3.6]{Verfurth2013} gives
\begin{equation}\label{PolyInv}
	\|p_k\|_{\partial K} \lesssim h_K^{-\frac{1}{2}}\|p_k\|_{-\frac{1}{2},\partial K}, \quad \forall p_k \in P_k(\partial K),
\end{equation}
Second, for the $H^1$-conforming virtual element space $\widehat{\mathbb{B}}_k(f)$, we have \cite[Theorem 3.6]{ChenLong2018}:
\begin{equation}\label{BkInv}
	|v|_{1,f}\lesssim h_f^{-1} \|v\|_f, \quad v \in \widehat{\mathbb{B}}_k(f).
\end{equation}
Finally, for any $\boldsymbol{v}_h \in \boldsymbol{V}_{k-1,k+1}(K)$ with
$k\ge 2$ ($\curl \boldsymbol{v}_h \in \boldsymbol{P}_{k-2}(K) \cap \ker(\div)$), the following inverse estimate for the curl operator is valid \cite[Lemma 4.1]{VEMforGener}:
\begin{equation}\label{CurlInverse}
	\|\curl \boldsymbol{v}_h\|\lesssim h_K^{-1}\|\v_h\|_K.
\end{equation} 
In addition, we recall two trace estimates. For any element or face $G$
\cite[Theorem A.20]{Schwab1998},
\begin{equation}\label{H1trace}
	\|v\|_{\partial G} \lesssim h_G^{-\frac{1}{2}}\|v\|_G + h_G^{\frac{1}{2}}|v|_{1,G},\quad \forall v\in H^1(G).
\end{equation}
For any element $K$, a scaled version of the trace estimate \eqref{HDivTrace} gives 
\begin{align}
	\label{HdivTrace}
	\|\boldsymbol{v}\cdot \boldsymbol{n}_{\partial K}\|_{-\frac{1}{2},\partial K}&\lesssim \|\boldsymbol{v}\|_K+ h_K\|\div \v\|_K,\quad\forall \v \in \boldsymbol{H}(\operatorname{div};K).
\end{align}
\par 
We slightly modify the  polynomial degrees in $\boldsymbol{H}(\operatorname{div})$-conforming virtual element space introduced by  \cite{VEMforGener} to obtain for $k\ge 2$:
\begin{align*}
	\boldsymbol{V}^f_{k-1, k+1}(K)=\left\{ \boldsymbol{v} \in \boldsymbol{L}^2(K): \operatorname{div}\boldsymbol{v}\in P_{k}(K), \operatorname{curl}\boldsymbol{v}\in \boldsymbol{P}_{k-2}(K),\right. \\
	\left.(\boldsymbol{v}\cdot \boldsymbol{n}_{\partial K})|_{\partial K} \in P_{k-1}(\partial K)\right\}.
\end{align*} 
It is endowed with the following degrees of freedom 
\begin{flalign}\label{dofVf1}
	&\ \bullet  \mathbf{D}^1_{\boldsymbol{V}^f}: \text{the face moments  } \frac{1}{|f|}\int_{f} \boldsymbol{v} \cdot \boldsymbol{n}_{f} p_{k-1} \d f,  \forall p_{k-1} \in P_{k-1}(f), &\\
	&\ \bullet \mathbf{D}^2_{\boldsymbol{V}^f}: \text{the volume moments } \frac{1}{|K|}\int_K \boldsymbol{v} \cdot \boldsymbol{p}_{k-1} \d K,  \forall \boldsymbol{p}_{k-1} \in \nabla P_{k}(K),&\\
	&\ \bullet \mathbf{D}^3_{\boldsymbol{V}^f}:\text{the volume moments } \label{dofVf3}
	\frac{1}{|K|}\int_K \boldsymbol{v} \cdot (\boldsymbol{x}_K\wedge \boldsymbol{p}_{k-2}) \d K, \forall \boldsymbol{p}_{k-2} \in \boldsymbol{P}_{k-2}(K).
\end{flalign}
For $k=1$, the serendipity space are given in \cite{VEMforlowMaxwell}:
\begin{align*}
	\boldsymbol{V}^f_{0, 2}(K)=\left\{ \boldsymbol{v} \in \boldsymbol{L}^2(K): \operatorname{div}\boldsymbol{v}\in P_{1}(K), \operatorname{curl}\boldsymbol{v}\in \boldsymbol{P}_{0}(K),
	(\boldsymbol{v}\cdot \boldsymbol{n}_{\partial K})|_{\partial K} \in P_{0}(\partial K) ,\right. \\ \left.\int_K\boldsymbol{v}\cdot( \boldsymbol{x}_K\wedge \boldsymbol{p}_0) \d K=0, \forall \boldsymbol{p}_0 \in \boldsymbol{P}_{0}(K)\right\},
\end{align*} 
equipped with the face moments \eqref{dofVf1} with $k=1$.
As shown in \cite[Lemma 4.1]{VEMforGener}, an auxiliary bound holds for the space $\boldsymbol{V}^f_{k-1, k+1}(K)$. 
We now extend this estimate to the space $\boldsymbol{V}_{k-1,k+1}(K)$ and their direct sum, following an analogous argument.
\begin{lemma}\label{lowbound}
	For each $\boldsymbol{v}_h \in  \boldsymbol{V}_{k-1, k+1}(K)$, $ \boldsymbol{V}^f_{k-1, k+1}(K)$ or their sum space, we have
	\begin{equation}\label{Stabvh}
		\|\boldsymbol{v}_h\|_{K} \lesssim h_K\|\operatorname{div}\boldsymbol{v}_h\|_{K}+h_K^{\frac{1}{2}}\|\boldsymbol{v}_h\cdot \boldsymbol{n}_{\partial K}\|_{\partial K}+\sup _{\boldsymbol{p}_{k-2} \in\boldsymbol{P}_{k-2}(K)} \frac{\int_{K} \boldsymbol{v}_{h} \cdot (\boldsymbol{x}_{K} \wedge \boldsymbol{p}_{k-2})\d K}{\left\|\boldsymbol{x}_{K} \wedge \boldsymbol{p}_{k-2}\right\|_{K}}.
	\end{equation}
\end{lemma}
\begin{proof}
	Consider the following Helmholtz decomposition:
	\begin{equation}\label{Decvh}
		\boldsymbol{v}_h = \curl\boldsymbol{\rho}+\nabla\phi,
	\end{equation}
	where $\phi \in H^1(K)/\mathbb{R}$ satisfies the Poisson equation
	\begin{equation*}
		\Delta\phi = \div \v_h\text{ in } K,\quad \nabla\phi\cdot \boldsymbol{n}_{\partial K} = \boldsymbol{v}_h\cdot \boldsymbol{n}_{\partial K} \text{ on } \partial K
	\end{equation*}
	and $\boldsymbol{\rho}\in \boldsymbol{H}(\curl;K)$ satisfies weakly
	\begin{equation*}
		\begin{aligned}
			\curl\curl \boldsymbol{\rho}&=\curl \boldsymbol{v}_h,\quad \div\boldsymbol{\rho}=0 \text{ in } K, \quad \boldsymbol{\rho}\wedge\boldsymbol{n}_{\partial K}=0 \text{ on } \partial K.
		\end{aligned}
	\end{equation*}
	The well-posedness of both subproblems ensures the validity of this decomposition, and we have the orthogonal relations:
	\begin{equation}\label{DecElement}
		(\curl\boldsymbol{\rho},\nabla\phi)_K=0,\quad \|\boldsymbol{v}\|^2_K=\|\curl \boldsymbol{\rho}\|_K^2+\|\nabla\phi\|_K^2.
	\end{equation}
	Using the orthogonal decomposition \eqref{Decvh}, integration by parts, the trace estimate \eqref{H1trace}, together with the Poincaré inequality, we obtain
	\begin{equation}
		\begin{aligned}
			\|\nabla \phi\|_K^2 =(\nabla\phi , \boldsymbol{v}_h)_K&=-\int_K \div \v _h \phi \d K+ \int_{\partial K} \boldsymbol{v}_h \cdot \boldsymbol{n}_{\partial K} \phi \d s\\
			&\le \|\div\v_h\|_K\|\phi\|_K+\|\boldsymbol{v}_h \cdot \boldsymbol{n}_{\partial K}\|_{\partial K}\|\phi\|_{\partial K} \\
			&\lesssim(h_K \|\div\v_h\|_K+h_K^{\frac{1}{2}}\|\boldsymbol{v}_h\cdot \boldsymbol{n}_{\partial K}\|_{\partial K} )\|\nabla \phi\|_K
		\end{aligned}
	\end{equation}
	For $k\ge 2$, the fact that $\curl \v_h\in\boldsymbol{P}_{k-2}(K)\cap\ker(\div)$ together with
	\eqref{VectPolyIden} yields a polynomial $\boldsymbol{p}_{k-2}\in\boldsymbol{P}_{k-2}(K)$ satisfying $\curl \boldsymbol{v}_h = \curl (\boldsymbol{x}_{K} \wedge \boldsymbol{p}_{k-2})$. This in turn yields the bound
	\begin{equation}\label{xwedgeConti}
		\| \boldsymbol{x}_{K} \wedge \boldsymbol{p}_{k-2}\|_K \le h_K \|\curl \v_h\|_K.
	\end{equation}
	Employing again the orthogonal decomposition \eqref{Decvh}, integration by parts, the estimates \eqref{xwedgeConti} and \eqref{CurlInverse}, we derive
	\begin{equation}\label{curlrhoEsti}
		\begin{aligned}
			\|\curl \boldsymbol{\rho}\|_K^2&= \int_K \boldsymbol{\rho} \curl \curl \boldsymbol{\rho}\d K =\int_K \boldsymbol{\rho} \curl \v_h \d K\\
			&= \int_K \boldsymbol{\rho} \cdot \curl (\boldsymbol{x}_K\wedge \boldsymbol{p}_{k-2})\d K
			=\int_K (\boldsymbol{v}_h -\nabla \phi)\cdot (\boldsymbol{x}_K\wedge \boldsymbol{p}_{k-2})\d K\\
			&\le \left( \|\nabla\phi\|_K+\sup _{\boldsymbol{p}_{k-2} \in\boldsymbol{P}_{k-2}(K)} \frac{\int_{K} \boldsymbol{v}_{h} \cdot (\boldsymbol{x}_{K} \wedge \boldsymbol{p}_{k-2})\d K}{\left\|\boldsymbol{x}_{K} \wedge \boldsymbol{p}_{k-2}\right\|_{K}}\right)\|\boldsymbol{x}_K\wedge \boldsymbol{p}_{k-2}\|_K\\
			& \lesssim\left( \|\nabla\phi\|_K+\sup _{\boldsymbol{p}_{k-2} \in\boldsymbol{P}_{k-2}(K)} \frac{\int_{K} \boldsymbol{v}_{h} \cdot (\boldsymbol{x}_{K} \wedge \boldsymbol{p}_{k-2})\d K}{\left\|\boldsymbol{x}_{K} \wedge \boldsymbol{p}_{k-2}\right\|_{K}}\right)\|\v_h\|_K.
		\end{aligned}
	\end{equation}
	For the case $k=1$, we have $\curl \v_h \in \boldsymbol{P}_0(K)$; however, owing to the structure of the serendipity spaces, the supremum term in \eqref{curlrhoEsti} vanishes.
	Combining \eqref{DecElement}--\eqref{curlrhoEsti}, we obtain \eqref{Stabvh}.
\end{proof}
\par 
We proceed to introduce 
an auxiliary interpolation operator $\boldsymbol{I}_h^f: \boldsymbol{H}^s(K)\cap \boldsymbol{H}(\operatorname*{div}; K) \to \boldsymbol{V}^f_{k-1,k+1}(K)$  with $s >\frac{1}{2}$  based on the degrees of freedom \eqref{dofVf1}-\eqref{dofVf3},
who owns the  following interpolation error estimates  \cite[Theorem 4.2]{VEMforGener}.
\begin{lemma}\label{IntEstf}
	If $\boldsymbol{v}\in \boldsymbol{H}^s(K) $ and $\operatorname{div}\boldsymbol{v}\in H^{l}(K), \frac{1}{2}<s\le k $ and $  0 \le l \le k+1$,  then we have 
	\begin{align}\label{Ihf}
		\|\boldsymbol{v}-\boldsymbol{I}_h^f\boldsymbol{v}\|_K &\lesssim h_K^s|\boldsymbol{v}|_{s, K} +h_K\|\div \v\|_K,  \\
		\label{divIhf}
		\|\operatorname{div}(\boldsymbol{v}-\boldsymbol{I}_h^f\boldsymbol{v})\|_{K}&\lesssim h_K^l|\operatorname{div}\boldsymbol{v}|_{l, K}.
	\end{align}
	The second term on the right-hand side of \eqref{Ihf} can be neglected if $s\ge 1$.
\end{lemma}
\noindent
We also introduce the interpolation error estimates for $J_h$ as follows, see \cite[Theorem 4.3]{Vem3dH1Error}.
\begin{lemma}
	For every $q \in H^1_0(\Omega) \cap H^{s}(\Omega)$ with $\frac{3}{2} < s \le k+1$, it holds that
	\begin{equation}\label{interpolation_J}
		\|q-J_hq\|_K+h_K|q-J_hq|_{1, K} \lesssim h_K^s|q|_{s, K}.
	\end{equation}
\end{lemma}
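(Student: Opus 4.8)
The plan is to prove the estimate element by element and then sum over $\mathcal{T}_h$, exploiting that $J_h$ is defined locally through the degrees of freedom $\mathbf{D}^1_{W}$--$\mathbf{D}^4_{W}$ of $W_k(K)$ and that it reproduces polynomials of degree up to $k$. First I would fix an element $K$, choose a polynomial $q_\pi \in P_k(K)$ (for instance an averaged Taylor polynomial of $q$), and use $J_h q_\pi = q_\pi$ together with the linearity of $J_h$ to write
\begin{equation*}
	q - J_h q = (q - q_\pi) - J_h(q - q_\pi).
\end{equation*}
The triangle inequality then reduces the task to two pieces: the polynomial approximation error of $q-q_\pi$, and a local stability bound for $J_h$ applied to $w := q - q_\pi$. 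The polynomial part is handled by the Bramble--Hilbert lemma together with the regularity assumptions $\boldsymbol{(A1)}$--$\boldsymbol{(A3)}$, yielding $\|q - q_\pi\|_K + h_K |q - q_\pi|_{1,K} \lesssim h_K^s |q|_{s,K}$.

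The substance of the argument is therefore the local stability of $J_h$, controlling $\|J_h w\|_K + h_K |J_h w|_{1,K}$ by suitably scaled norms of $w$. I would obtain this in two steps. The first is a norm equivalence on the virtual space: for any $v_h \in W_k(K)$ the quantity $\|v_h\|_K^2 + h_K^2 |v_h|_{1,K}^2$ is comparable to a weighted sum of the squared degrees of freedom of $v_h$, which follows from the standard VEM scaling arguments combined with the boundedness of $\Pi^{\nabla,K}_k$ and the stabilization bound \eqref{sta`b1}. The second step is that, by construction, each degree of freedom of $J_h w$ equals the corresponding degree of freedom of $w$; I would then bound these individual functionals by the Sobolev norms of $w$ using scaled trace inequalities, tracking the powers of $h_K$, $h_f$ and $h_e$ through $\boldsymbol{(A3)}$.

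I expect the main obstacle to be the control of the point-evaluation functionals $\mathbf{D}^1_{W}$ and $\mathbf{D}^2_{W}$. Unlike the face and volume moments $\mathbf{D}^3_{W}$ and $\mathbf{D}^4_{W}$, which are directly $L^2$-stable via Cauchy--Schwarz and trace estimates, the vertex and edge nodal values are not $L^2$-bounded: one must pass through scaled trace inequalities down to the two-dimensional faces and one-dimensional edges and then invoke the Sobolev embedding $H^s(K)\hookrightarrow C^0(\overline{K})$, which in three dimensions forces the hypothesis $s > \tfrac{3}{2}$. Keeping the exact powers of $h_K$ consistent between the nodal weights appearing in the norm equivalence and the embedding/Bramble--Hilbert estimates for $w = q - q_\pi$ is the delicate bookkeeping; once this is in place, each degree of freedom of $q-q_\pi$ is bounded by $h_K^s|q|_{s,K}$ after multiplication by its weight.

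Assembling these bounds gives the local estimate $\|q-J_h q\|_K + h_K|q-J_h q|_{1,K} \lesssim h_K^s|q|_{s,K}$ on each $K$. Squaring, summing over $K \in \mathcal{T}_h$ and using the finite overlap of the star-shaped patches from $\boldsymbol{(A1)}$ then yields the global form of \eqref{interpolation_J}, completing the proof.
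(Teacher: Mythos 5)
The paper does not actually prove this lemma: it is imported verbatim as \cite[Theorem 4.3]{Vem3dH1Error}, so there is no in-paper argument to compare against. Your proposal reconstructs the classical route --- polynomial reproduction $J_hq_\pi=q_\pi$, the splitting $q-J_hq=(q-q_\pi)-J_h(q-q_\pi)$, Bramble--Hilbert for the first piece, and local stability of $J_h$ for the second --- and this skeleton is sound; in particular you correctly identify that $P_k(K)\subset W_k(K)$ is reproduced by the degrees of freedom \eqref{DW1}--\eqref{DW4}, and that the point evaluations force $s>\tfrac32$ via the embedding $H^s(K)\hookrightarrow C^0(\overline K)$ in three dimensions.

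The one place where the argument is materially thinner than it looks is the ``norm equivalence on the virtual space'': the claim that $\|v_h\|_K^2+h_K^2|v_h|_{1,K}^2$ is comparable to a weighted sum of squared degrees of freedom for $v_h\in W_k(K)$ on a general polyhedron is itself a nontrivial theorem --- essentially as deep as the lemma you are proving --- and it does not follow merely from ``standard scaling arguments'' plus the boundedness of $\Pi^{\nabla,K}_k$. The difficulty is that elements of $W_k(K)$ are known only through their degrees of freedom, so the lower bound $\|v_h\|_K\lesssim(\text{weighted DOFs})$ requires inverse-type estimates for virtual functions on faces and elements (the kind of machinery the paper invokes from \cite{ChenLong2018} and develops in its Appendix for the stabilization). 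This is precisely why the literature the paper leans on typically proves \eqref{interpolation_J} by a different mechanism: one exploits that $J_hq$ is characterized as the solution of a Poisson problem with polynomial right-hand side and prescribed boundary trace, so $|q-J_hq|_{1,K}$ is controlled by elliptic stability in terms of an $H^{1/2}(\partial K)$ boundary discrepancy and an $H^{-1}(K)$ interior discrepancy, and the estimate cascades from edges to faces to the element. Your approach can be made to work, but you should either prove the DOF-norm equivalence for $W_k(K)$ under $\boldsymbol{(A1)}$--$\boldsymbol{(A3)}$ or replace that step by the elliptic-stability argument; as written, the load-bearing step is asserted rather than established. The final summation over $K\in\mathcal{T}_h$ is unnecessary here, since \eqref{interpolation_J} is stated elementwise.
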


Based on the above preparation, we give  the interpolation error estimates for $\boldsymbol{I}_h$.
\begin{theorem}\label{InEst}
	If $\boldsymbol{v} \in \boldsymbol{H}^s(\Omega)$ and $\operatorname{div}\boldsymbol{v} \in H^{s+1}(\Omega)$ with $\frac{1}{2} < s \le k $, there hold
	\begin{align}
		\label{Intpolation1}
		\|\boldsymbol{v}-\boldsymbol{I}_h\boldsymbol{v}\|_K&\lesssim h^s_K(|\boldsymbol{v}|_{s, K}+h^2_K|\operatorname{div}\boldsymbol{v}|_{s+1, K})+h_K\|\div \v\|_K, \\ 
		\label{Intpolation2}
		\|\operatorname{div}(\boldsymbol{v}-\boldsymbol{I}_h\boldsymbol{v})\|_K&\lesssim h_K^{s+1}|\operatorname{div}\boldsymbol{v}|_{s+1, K},\\
		\label{Intpolation3}
		|\operatorname{div}(\boldsymbol{v}-\boldsymbol{I}_h\boldsymbol{v})|_{1, K}&\lesssim h^s_K|\operatorname{div}\boldsymbol{v}|_{s+1, K}.
	\end{align}
	The last term on the right-hand side of \eqref{Intpolation1} can be neglected if $s\ge 1$.
\end{theorem}
\begin{proof} According to the commutative property \eqref{commut} between $\boldsymbol{I}_h $ and $J_h$ and the interpolation estimate \eqref{interpolation_J}, the results \eqref{Intpolation2} and \eqref{Intpolation3} can be easily obtained.  
	Considering the error $\boldsymbol{I}_h^f \boldsymbol{v}-\boldsymbol{I}_h\boldsymbol{v}$,
	from the properties of $\boldsymbol{I}_h^f $ and $\boldsymbol{I}_h$, we get
	\begin{align*}
		(\boldsymbol{I}^f_h\v - \boldsymbol{I}_h\v)\cdot \boldsymbol{n}_{\partial K} &= 0 \quad \text{on } \partial K, \\
		\int_K (\boldsymbol{I}^f_h\v - \boldsymbol{I}_h\v)\cdot (\boldsymbol{x}_K\wedge \boldsymbol{p}_{k-2})\d K &= 0, \quad \forall \boldsymbol{p}_{k-2} \in \boldsymbol{P}_{k-2}(K),
	\end{align*} 
	which, together with Lemma \ref{lowbound}, \eqref{divIhf} and \eqref{Intpolation2},  yields
	\begin{equation}\label{I-If}
		\begin{aligned}
			\|\boldsymbol{I}_h^f\boldsymbol{v}-\boldsymbol{I}_h\boldsymbol{v}\|_K &\lesssim h_K\|\operatorname{div}(\boldsymbol{I}_h^f\boldsymbol{v}-\boldsymbol{I}_h\boldsymbol{v})\|_{K}+h_K^{\frac{1}{2}}\|(\boldsymbol{I}_h^f\boldsymbol{v}-\boldsymbol{I}_h\boldsymbol{v})\cdot \boldsymbol{n}_{\partial K}\|_{\partial K}\\
			&\quad+\sup _{\boldsymbol{p}_{k-2} \in \boldsymbol{P}_{k-2}(K)} \frac{\int_{K} (\boldsymbol{I}_h^f\boldsymbol{v}-\boldsymbol{I}_h\boldsymbol{v}) \cdot (\boldsymbol{x}_{K} \wedge \boldsymbol{p}_{k-2})\d K}{\left\|\boldsymbol{x}_{K} \wedge \boldsymbol{p}_{k-2}\right\|_{K}}\\
			&\le h_K\|\operatorname{div}(\boldsymbol{v}-\boldsymbol{I}_h^f\boldsymbol{v})\|_K+h_K\|\operatorname{div}(\boldsymbol{v}-\boldsymbol{I}_h\boldsymbol{v})\|_K\\
			&\lesssim h_K^{s+2} |\operatorname{div}\boldsymbol{v}|_{s+1,K}.
		\end{aligned}
	\end{equation}
	Applying the triangle inequality, \eqref{Ihf} and \eqref{I-If}, we have 
	\begin{equation*}
		\begin{aligned}
			\|\boldsymbol{v}-\boldsymbol{I}_h\boldsymbol{v}\|_K&\le \|\boldsymbol{v}-\boldsymbol{I}_h^f\boldsymbol{v}\|_K+\|\boldsymbol{I}_h^f\boldsymbol{v}-\boldsymbol{I}_h\boldsymbol{v}\|_K\\
			&\lesssim h^s_K(|\boldsymbol{v}|_{s, K}+h^2_K|\operatorname{div}\boldsymbol{v}|_{s+1, K})+h_K\|\div \v\|_K.
		\end{aligned}
	\end{equation*}
	The proof is complete. 
\end{proof}
\par
Now we investigate the stability of bilinear form 
\begin{equation*}
	b_h^K(\cdot, \cdot ) :\boldsymbol{V}_{k-1,k+1}(K) \times \boldsymbol{V}_{k-1,k+1}(K) \to \mathbb{R}
\end{equation*}
defined by
\begin{equation}\label{bhk}
	b_h^K(\boldsymbol{v}_h, \boldsymbol{w}_h):=(\Pi_{k-1}^{0,K}\boldsymbol{v}_h, \Pi_{k-1}^{0,K}\boldsymbol{w}_h)_K+S^K((I-\Pi^{0,K}_{k-1})\boldsymbol{v}_h, (I-\Pi_{k-1}^{0,K})\boldsymbol{w}_h),
\end{equation}
where   
\begin{equation}\label{SK}
	\begin{aligned}
		S^K(\boldsymbol{\xi}_h, \boldsymbol{\eta}_h):=h_K^2(\Pi_{k}^{0, K}\operatorname{div}\boldsymbol{\xi}_h, \Pi_{k}^{0, K}\operatorname{div}\boldsymbol{\eta}_h)_{ K}+\sum_{f\in \partial K}\left[h_f^{3}(\Pi^{0, f}_k \operatorname{div}\boldsymbol{\xi}_h, \Pi^{0, f}_k \operatorname{div}\boldsymbol{\eta}_h)_{f} \right.\\
		\left.+h_f^{4}(\operatorname{div}\boldsymbol{\xi}_h, \operatorname{div}\boldsymbol{\eta}_h)_{ \partial f} 
		+h_f(\boldsymbol{\xi}_h\cdot \boldsymbol{n}_f, \boldsymbol{\eta}_h\cdot \boldsymbol{n}_f)_{ f}\right].
	\end{aligned}
\end{equation}
Since $\operatorname{div} \boldsymbol{v}_h \in W_k(K)$ for all $\boldsymbol{v}_h \in \boldsymbol{V}_{k-1,k+1}(K)$, Remark \ref{remark2} implies that the first three terms of \eqref{SK} are computable. The last term, meanwhile, is handled via \eqref{dofV5}.
\begin{lemma}\label{refLema4}
	There exist two positive constants $\alpha_*$ and $\alpha^* $ independent of $h_K$ such that  
	\begin{equation}\label{stab3}
		\begin{aligned}
			\alpha_*\|\boldsymbol{v}_h\|^2_{K}\le S^K(\boldsymbol{v}_h, \boldsymbol{v}_h)\le \alpha^* (\|\boldsymbol{v}_h\|^2_{K}+ h_K^2\|\operatorname*{div}\boldsymbol{v}_h\|_{K}^2+h_K^4|\div\v_h|_{1,K}^2),\\
			\quad  \forall \boldsymbol{v}_h \in \boldsymbol{V}_{k-1, k+1}(K)\cap \ker(\Pi^{0, K}_{k-1}).
		\end{aligned}
	\end{equation}
\end{lemma}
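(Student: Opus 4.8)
The plan is to establish the two inequalities in \eqref{stab3} separately, writing $q:=\div\boldsymbol{v}_h\in W_k(K)$ throughout. The lower (coercivity) bound is the substantive direction: it rests on Lemma \ref{lowbound} together with a norm-equivalence argument on the finite-dimensional space $W_k(K)$. The upper (continuity) bound is routine and follows from the $L^2$-stability of the projections and a collection of trace inequalities for virtual element functions, combined with the mesh regularity $h_f\simeq h_K$ granted by (A1)--(A3).

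For the lower bound I would first apply Lemma \ref{lowbound} to $\boldsymbol{v}_h\in\boldsymbol{V}_{k-1,k+1}(K)$. Since $\boldsymbol{x}_K\wedge\boldsymbol{p}_{k-2}\in\boldsymbol{P}_{k-1}(K)$ for every $\boldsymbol{p}_{k-2}\in\boldsymbol{P}_{k-2}(K)$ and $\boldsymbol{v}_h\in\ker(\Pi^{0,K}_{k-1})$, the supremum term in Lemma \ref{lowbound} vanishes, leaving
\[
\|\boldsymbol{v}_h\|_K^2\lesssim h_K^2\|\div\boldsymbol{v}_h\|_K^2+h_K\|\boldsymbol{v}_h\cdot\boldsymbol{n}_{\partial K}\|_{\partial K}^2 .
\]
The second term equals $\sum_{f}h_K\|\boldsymbol{v}_h\cdot\nf\|_f^2\simeq\sum_f h_f\|\boldsymbol{v}_h\cdot\nf\|_f^2$, which is precisely the last contribution to $S^K(\boldsymbol{v}_h,\boldsymbol{v}_h)$. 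It then remains to absorb $h_K^2\|q\|_K^2$ into the three ``$\div$''-contributions of \eqref{SK}. To this end I introduce
\[
N(q)^2:=\|\Pi_k^{0,K}q\|_K^2+\sum_{f\in\partial K}h_f\|\Pi_k^{0,f}q\|_f^2+\sum_{f\in\partial K}h_f^2\|q\|_{\partial f}^2,
\]
and note that $h_K^2 N(q)^2$ coincides, up to the equivalence $h_f\simeq h_K$, with the three $\div$-terms of $S^K$. The key point is that $N$ is actually a \emph{norm} on $W_k(K)$: if $N(q)=0$ then $q$ vanishes on every edge (so the vertex and edge-point values $\mathbf{D}^1_W,\mathbf{D}^2_W$ vanish), its face moments against $P_{k-2}(f)$ vanish (from $\Pi_k^{0,f}q=0$, giving $\mathbf{D}^3_W$), and its interior moments against $P_m(K)$ vanish (from $\Pi_k^{0,K}q=0$, giving $\mathbf{D}^4_W$); by the unisolvence of $\mathbf{D}^1_W$--$\mathbf{D}^4_W$ for $W_k(K)$ this forces $q=0$. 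Since $W_k(K)$ is finite-dimensional, $\|\cdot\|_K\lesssim N(\cdot)$ on a reference configuration, and a standard scaling argument under (A1)--(A3) transfers this to $\|q\|_K^2\lesssim N(q)^2$ with an $h$-independent constant. Multiplying by $h_K^2$ yields $h_K^2\|q\|_K^2\lesssim S^K(\boldsymbol{v}_h,\boldsymbol{v}_h)$ and completes the lower bound.

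For the upper bound I would estimate the four contributions of \eqref{SK} individually. The interior term obeys $h_K^2\|\Pi_k^{0,K}q\|_K^2\le h_K^2\|q\|_K^2$ by $L^2$-stability of $\Pi_k^{0,K}$. For the face term, $L^2(f)$-stability of $\Pi_k^{0,f}$ and the multiplicative trace inequality $\|q\|_f^2\lesssim h_f^{-1}\|q\|_K^2+h_f|q|_{1,K}^2$ give $h_f^3\|\Pi_k^{0,f}q\|_f^2\lesssim h_K^2\|q\|_K^2+h_K^4|q|_{1,K}^2$. The normal-trace term is controlled by the $\boldsymbol{H}(\div)$-trace inequality $h_f\|\boldsymbol{v}_h\cdot\nf\|_f^2\lesssim\|\boldsymbol{v}_h\|_K^2+h_K^2\|q\|_K^2$, which is legitimate because $\boldsymbol{v}_h\cdot\nf\in P_{k-1}(f)$ and $\boldsymbol{v}_h\in\boldsymbol{H}(\div;K)$. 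Finally, the edge term uses that $q|_e\in P_k(e)$ on every edge together with the VEM trace inequality $\|q\|_{\partial f}^2\lesssim h_f^{-2}\|q\|_K^2+|q|_{1,K}^2$, giving $h_f^4\|q\|_{\partial f}^2\lesssim h_K^2\|q\|_K^2+h_K^4|q|_{1,K}^2$. Summing over faces and using $h_f\simeq h_K$ reproduces the right-hand side of \eqref{stab3}.

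I expect the main obstacle to be twofold, and both parts are tied to the fact that here $\div\boldsymbol{v}_h$ is a genuine virtual function in $W_k(K)$ rather than a polynomial: establishing the norm-equivalence $\|q\|_K\lesssim N(q)$ with an $h$-uniform constant (the reference-element/scaling step must be carried out carefully under (A1)--(A3)), and justifying the codimension-two edge trace $\|q\|_{\partial f}^2\lesssim h_f^{-2}\|q\|_K^2+|q|_{1,K}^2$, which fails for general $H^1$ functions and relies essentially on the polynomial structure of $q$ on edges and on norm-equivalence over the finite-dimensional space $W_k(K)$. The remaining estimates are standard projection-stability and trace arguments.
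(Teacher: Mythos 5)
Your overall architecture matches the paper's: Lemma \ref{lowbound} with the vanishing supremum term (since $\boldsymbol{x}_K\wedge\boldsymbol{P}_{k-2}(K)\subset\boldsymbol{P}_{k-1}(K)$ and $\boldsymbol{v}_h\in\ker(\Pi^{0,K}_{k-1})$) for the lower bound, and term-by-term projection stability plus trace and inverse estimates for the upper bound; your four upper-bound estimates are essentially identical to the paper's, including the reliance on the polynomial structure of $\boldsymbol{v}_h\cdot\boldsymbol{n}_f$ on faces and of $\operatorname{div}\boldsymbol{v}_h$ on edges for the codimension-one and codimension-two traces. The genuine difference is in how you absorb $h_K^2\|\operatorname{div}\boldsymbol{v}_h\|_K^2$ into $S^K(\boldsymbol{v}_h,\boldsymbol{v}_h)$. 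You propose a norm-equivalence argument: $N(\cdot)$ is a norm on $W_k(K)$ by unisolvence of $\mathbf{D}^1_W$--$\mathbf{D}^4_W$, hence $\|q\|_K\lesssim N(q)$ by finite dimensionality on a reference configuration plus scaling. The paper instead argues constructively: it writes $\|\operatorname{div}\boldsymbol{v}_h\|_K\lesssim h_K|\operatorname{div}\boldsymbol{v}_h|_{1,K}+\|\Pi^{0,K}_k\operatorname{div}\boldsymbol{v}_h\|_K$ and then invokes the coercivity $|q_h|^2_{1,K}\lesssim S_n^K(q_h,q_h)$ of the $H^1$ stabilization (Lemma \ref{AppLemma}), whose right-hand side is precisely $h_K^{-2}$ times your $N(q_h)^2$ up to $h_f\simeq h_K$; that coercivity is proved in the Appendix by integration by parts, a negative-norm inverse estimate for $\Delta q_h$, and trace inequalities, with constants depending only on the regularity parameter $\mu$.

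This matters because the step you yourself flag as the main obstacle is where your argument, as written, has a gap. For virtual element spaces on general polyhedra there is no fixed reference element: $W_k(K)$ depends on the shape of $K$, and the class of shapes admitted by (A1)--(A3) is not a finite (or compactly parametrized, without further work) family, so ``finite dimensionality plus scaling'' does not by itself deliver an $h$- and shape-independent constant in $\|q\|_K\lesssim N(q)$. One needs either a compactness argument over the admissible shape class or a direct functional-analytic proof of the inequality. The inequality you need is true, and it is exactly what Lemma \ref{AppLemma} combined with $\|q-\Pi^{0,K}_kq\|_K\lesssim h_K|q|_{1,K}$ supplies; so the gap is reparable by routing through that lemma rather than through scaling, but the scaling step is not justified as stated. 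Everything else in your proposal is sound and coincides with the paper's proof.
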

\begin{proof}
	For any $\boldsymbol{v}_h \in \boldsymbol{V}_{k-1, k+1}(K)\cap \ker(\Pi^{0, K}_{k-1})$, using Lemma \ref{lowbound} leads to
	\begin{equation}\label{aux3}
		\|\boldsymbol{v}_h\|_{K} \lesssim h_K\|\operatorname{div}\boldsymbol{v}_h\|_{K}+h_K^{\frac{1}{2}}\|\boldsymbol{v}_h\cdot \boldsymbol{n}_{\partial K}\|_{\partial K}.
	\end{equation}
	Due to the fact that $ \operatorname{div}\boldsymbol{v}_h \in W_k(K)$ and the definition of $S_n^K$ in Remark \ref{remark2}, it holds
	\begin{equation}\label{aux4}
		\begin{aligned}
			|\operatorname{div}\boldsymbol{v}_h|^2_{1, K}&\lesssim S_n^K(\div\boldsymbol{v}_h,\div\v_h)\\
			&=h_K^{-2}\|\Pi^{0, K}_k \div\v_h\|^2_K +\sum_{f\in\partial K} (h_f^{-1}\|\Pi^{0, f}_k\div\v_h\|^2_f +\|\div\v_h\|^2_{ \partial f}).
		\end{aligned}
	\end{equation}
	For the $L^2$-norm estimate, we obtain
	\begin{align*}
		\|\div\v_h\|_{K}&\le \|\div\v_h-\Pi^{0, K}_{k}\div\v_h\|_K+\|\Pi^{0, K}_{k}\div\v_h\|_K\\
		&\lesssim h_K|\div\v_h|_{1, K}+\|\Pi^{0, K}_{k}\div\v_h\|_K,
	\end{align*}
	which, together with \eqref{aux3} and \eqref{aux4}, yields 
	\begin{align*}
		\|\boldsymbol{v}_h\|_K&\lesssim h_K\|\Pi^{0, K}_k \operatorname{div}\boldsymbol{v}_h\|_K\\
		&\quad+\sum_{f\in \partial K}\left[h_f^{\frac{3}{2}} \|\Pi^{0, f}_k \operatorname{div}\boldsymbol{v}_h\|_f +h_f^2 \|\operatorname{div}\boldsymbol{v}_h\|_{\partial f}+h_f^{\frac{1}{2}}\|\boldsymbol{v}_h\cdot \boldsymbol{n}_{f}\|_{f}\right].
	\end{align*}
	This implies the lower bound in \eqref{stab3}.
	\par 
	Next we  estimate the four terms on the right-hand side of \eqref{SK} for the upper bound.
	Since the stability of projection $\Pi_{k}^{0,K}$, it holds
	\begin{equation}\label{PiStab}
		\|\Pi^{0,K}_{k}\div\v_h\|_K\lesssim \|\div\v_h\|_K.
	\end{equation}
	According to the stability of the projection $\Pi_{k}^{0,f}$ and  the trace inequality \eqref{H1trace}, we get  
	\begin{equation}\label{aux5}
		\begin{aligned}
			\sum_{f\in\partial K}h_f^{\frac{3}{2}}\|\Pi^{0, f}_k\operatorname{div} \v_h \|_f\lesssim h_K^{\frac{3}{2}}\|\operatorname{div}\v_h\|_{\partial K}
			\lesssim h_K\|\operatorname{div}\v_h\|_{ K}+h_K^2|\div\vh|_{1, K}.
		\end{aligned}
	\end{equation}
	Using the trace inequality \eqref{H1trace}, inverse inequality in $\widehat{\mathbb{B}}_k(f)$ \eqref{BkInv} obtains 
	\begin{equation}
		\begin{aligned}
			\sum_{f\in\partial K}h_f^2\|\div\vh\|_{\partial f} &\lesssim \sum_{f\in \partial K}h_f^2 (h_f^{-\frac{1}{2}}\|\div\vh\|_{ f}+h_f^{\frac{1}{2}}|\div\vh|_{1, f}) 
			\\ &\lesssim h_K^{\frac{3}{2}}\|\operatorname{div}\v_h\|_{\partial K} \lesssim h_K\|\div\vh\|_{K}+h_K^2|\div\vh|_{1, K}.
		\end{aligned}
	\end{equation}
	It follows from the polynomial inverse estimate \eqref{PolyInv} and the trace inequality \eqref{HdivTrace} that
	\begin{equation}\label{PolyIn}
		\sum_{f\in \partial K} h_f^{\frac{1}{2}}\|\vh\cdot \nf\|_f\lesssim \|\vh\cdot\nf\|_{-\frac{1}{2}, \partial K}\lesssim \|\vh\|_{ K}+h_K\|\div\vh\|_{ K}.
	\end{equation}
	Thus, we conclude from \eqref{PiStab}--\eqref{PolyIn} that the upper bound in \eqref{stab3} holds.
\end{proof}
\section{Discretization}
\subsection{The discrete bilinear forms}
In this subsection, we present three  bilinear forms  to discretize the  continuous problem \eqref{mixForm}.
For any $\boldsymbol{v}_h,\boldsymbol{w}_h \in \boldsymbol{V}_{k-1,k+1}(\Omega)$, using the discrete $H^1$-product in $W_k(K)$ discretizes $(\nabla\div\boldsymbol{v}_h, \nabla\div\boldsymbol{w}_h)_K$ as follows 
\begin{align*}
	a_h^K(\boldsymbol{v}_h, \boldsymbol{w}_h):=[\div \boldsymbol{v}_h, \div \boldsymbol{w}_h ]_{n,K}&=(\nabla\Pi_k^{\nabla,K}\operatorname{div}\boldsymbol{v}_h, \nabla\Pi_k^{\nabla,K}\operatorname{div}\boldsymbol{w}_h)_K\\
	&\quad+S_n^K((I-\Pi_k^{\nabla,K})\operatorname{div}\boldsymbol{v}_h, (I-\Pi_k^{\nabla,K})\operatorname{div}\boldsymbol{w}_h).
\end{align*}
By the standard argument \cite{VEM2013} and the definition of $S^K_n(\cdot,\cdot)$ in Remark \ref{remark2}, the local bilinear form $a_h^K(\cdot, \cdot)$ satisfies the following properties:\\
$\bullet$ consistency: for all $\v_h\in \boldsymbol{V}_{k-1,k+1}(K)$ and  $\boldsymbol{q}_{k+1}\in \boldsymbol{P}_{k+1}(K)$, 
\begin{equation}\label{ahConsist}
	a_h^K(\v_h,\boldsymbol{q}_{k+1}) = (\nabla\div \v_h, \nabla\div \boldsymbol{q}_{k+1})_K,
\end{equation}
$\bullet$ stability: for all $\v_h\in \boldsymbol{V}_{k-1,k+1}(K)$, 
\begin{equation}\label{ahStab}
	(\nabla\div \boldsymbol{v}_h,\nabla\div\boldsymbol{v}_h)_K \lesssim a_h^K(\boldsymbol{v}_h,\v_h)\lesssim  (\nabla\div\v_h,\nabla\div\v_h)_K.
\end{equation}
\par
For the bilinear form $(\v_h,\operatorname*{curl}\boldsymbol{\phi}_h )_K$ with $\boldsymbol{\phi}_h\in\boldsymbol{\Sigma}_{0,k}(K)$ and  $\boldsymbol{v}_h\in\boldsymbol{V}_{k-1,k+1}(K)$, we use $b_h^K(\cdot,\cdot)$ defined in \eqref{bhk} to discretize it. 
From the lower bound estimate of Lemma \ref{refLema4}, for any $\boldsymbol{v}_h,\boldsymbol{w}_h\in\boldsymbol{V}_{k-1,k+1}(K)$, we easily obtain the coercivity 
\begin{equation}\label{bhCoer}
	(\boldsymbol{v}_h,\boldsymbol{v}_h)_K\lesssim b_h^K(\boldsymbol{v}_h,\boldsymbol{v}_h).
\end{equation}
Defining a scaled norm $$ \|\v_h\|^2_{h,\boldsymbol{V}(K)}:= \|\vh\|^2_K+h_K^2\|\div\v_h\|^2_K+h_K^4|\div\v_h|^2_{1,K}$$ and using the upper bound estimate of Lemma \ref{refLema4} and the polynomial inverse estimate, we have the continuity   
\begin{equation}
	\begin{aligned}\label{bhConti}
		b_h^K(\boldsymbol{v}_h,\boldsymbol{w}_h) &\lesssim (\|\vh\|^2_K+h_K^2\|\div(I-\Pi_{k-1}^{0,K})\vh\|^2_K+h_K^4|\div(I-\Pi_{k-1}^{0,K})\v_h|^2_{1,K})^{\frac{1}{2}} \\
		&\quad\quad(\|\wh\|^2_K+h_K^2\|\div(I-\Pi_{k-1}^{0,K})\w_h\|^2_K+h_K^4|\div(I-\Pi_{k-1}^{0,K})\w_h|^2_{1,K})^{\frac{1}{2}}\\
		&\lesssim  \|\vh\|_{h,\boldsymbol{V}(K)} \|\wh\|_{h,\boldsymbol{V}(K)}.
	\end{aligned}
\end{equation}
The consistency for $b_h^K(\cdot,\cdot)$ is satisfied by
\begin{equation}\label{bhConsist}
	b_h^K(\boldsymbol{w}_h,\boldsymbol{q}_{k-1})=(\boldsymbol{w}_h,\boldsymbol{q}_{k-1})_K,\quad \forall \boldsymbol{w}_h\in \boldsymbol{V}_{k-1,k+1}(K),\boldsymbol{q}_{k-1}\in\boldsymbol{P}_{k-1}(K).
\end{equation}
\par
At last, according to the discrete $L^2$-product in $\boldsymbol{\Sigma}_{0,k}(K)$, we define the bilinear form by 
\begin{align*}
	c_h^K(\boldsymbol{\phi}_h, \boldsymbol{\psi}_h):=[\boldsymbol{\phi}_h, \boldsymbol{\psi}_h]_{e,K} = (\Pi^{0,K}_0\boldsymbol{\phi}_h, \Pi^{0,K}_0\boldsymbol{\psi}_h)+S_e^K((I-\Pi^{0,K}_0)\boldsymbol{\phi}_h, (I-\Pi^{0,K}_0)\boldsymbol{\psi}_h), \\
	\forall \boldsymbol{\phi}_h, \boldsymbol{\psi}_h\in \boldsymbol{\Sigma}_{0, k}(K),
\end{align*}
which is used to discretize $(\nabla q_h, \boldsymbol{\phi}_h )_K$ for all $q_h\in U_1(K)$ and $\boldsymbol{\phi}_h \in \boldsymbol{\Sigma}_{0,k}(K)$.
We also have  \\
$\bullet$ consistency 
\begin{equation}\label{chConsist}
	c_h^k(\boldsymbol{\phi}_h,\boldsymbol{q}_0)= (\boldsymbol{\phi}_h,\boldsymbol{q}_0)_K,\quad \forall \boldsymbol{\phi}_h\in\boldsymbol{\Sigma}_{0,k}(K),\boldsymbol{q}_0\in\boldsymbol{P}_0(K).
\end{equation}
$\bullet$ stability 
\begin{equation}\label{chStab}
	(\boldsymbol{\phi}_h,\boldsymbol{\phi}_h)_K\lesssim c_h^K(\boldsymbol{\phi}_h,\boldsymbol{\phi}_h)\lesssim (\boldsymbol{\phi}_h,\boldsymbol{\phi}_h)_K,\quad \forall \boldsymbol{\phi}_h \in \boldsymbol{\Sigma}_{0,k}(K).
\end{equation}
\par
As usual,  the global bilinear forms $a_h(\cdot, \cdot) $,  $b_h(\cdot, \cdot) $ and $ c_h(\cdot, \cdot) $ are defined by 
\begin{equation*}
	\begin{aligned}
		a_h(\boldsymbol{v}, \boldsymbol{w})&=\sum_{K\in \mathcal{T}_h} a_h^K(\boldsymbol{v}, \boldsymbol{w}),  \quad \forall \boldsymbol{v} , \w \in \boldsymbol{V}_{k-1, k+1}(\Omega),  \\
		b_h(\boldsymbol{v}, \operatorname{curl} \boldsymbol{\phi}) &=\sum_{K\in \mathcal{T}_h}b_h^K(\boldsymbol{v}, \operatorname{curl}\boldsymbol{\phi}),  \quad  \forall \boldsymbol{v}\in \boldsymbol{V}_{k-1, k+1}(\Omega), \boldsymbol{\phi} \in \boldsymbol{\Sigma}_{0,k}(\Omega), \\
		c_h(\boldsymbol{\phi}, \nabla q)&= \sum_{K\in \mathcal{T}_h}c_h^K(\boldsymbol{\phi}, \nabla q), \quad \forall \boldsymbol{\phi} \in \boldsymbol{\Sigma}_{0,k}(\Omega), q\in U_1(\Omega).
	\end{aligned}
\end{equation*}
\subsection{The discrete problem}
By imposing homogeneous boundary conditions, we define the following discrete spaces:
\begin{gather*}
	U_h:= U_{1}(\Omega)\cap H_0^1(\Omega), \quad\boldsymbol{\Sigma}_h:=\boldsymbol{\Sigma}_{0,k+1}(\Omega)\cap \boldsymbol{V}_0(\Omega), \\\boldsymbol{V}_h:=\boldsymbol{V}_{k-1,k+1}(\Omega)\cap \boldsymbol{V}_0(\Omega), \quad W_h:=W_{k}(\Omega)\cap H_0^1(\Omega)\cap L_0^2(\Omega).
\end{gather*}
These spaces form an exact discrete complex:
\begin{equation}\label{homdisComplex}
	0 \stackrel{}{\longrightarrow} U_h\stackrel{\nabla}{\longrightarrow} \boldsymbol{\Sigma}_{h} \stackrel{\curl }{\longrightarrow} \boldsymbol{V}_{h} \stackrel{\div }{\longrightarrow} W_{h}\longrightarrow 0.
\end{equation}
Based on the above preparations, we are ready to state the virtual element scheme of \eqref{mixForm}: find $(\boldsymbol{u}_h, \boldsymbol{\varphi}_h, p_h)\in \boldsymbol{V}_h \times \boldsymbol{\Sigma}_{h}\times U_h$ such that
\begin{equation}\label{disProblem}
	\left\{\begin{aligned}
		a_{h}\left(\boldsymbol{u}_{h},  \boldsymbol{v}_{h}\right)+b_{h}\left(\operatorname{curl} \boldsymbol{\varphi}_{h},\boldsymbol{v}_{h}\right) & =\left(\boldsymbol{f}_{h},   \boldsymbol{v}_{h}\right),  \quad \forall \boldsymbol{v}_{h} \in \boldsymbol{V}_h,  \\
		b_{h}\left(\boldsymbol{u}_{h},  \operatorname{curl} \boldsymbol{\phi}_{h}\right)  +c_h(\nabla p_h, \boldsymbol{\phi}_h) &=0,  \quad \forall \boldsymbol{\phi}_{h} \in \boldsymbol{\Sigma}_h, \\
		c_h(\boldsymbol{\varphi}_h,\nabla q_h)&=0, \quad \forall q_h \in U_h,
	\end{aligned}\right.
\end{equation}
where $\boldsymbol{f}_h|_K = \Pi^{0,K}_{k-1} \boldsymbol{f}$ satisfies 
\begin{equation}\label{rightHandError}
	\|\boldsymbol{f}-\boldsymbol{f}_h\|\lesssim h^{s}\|\boldsymbol{f}\|_{s}, \quad 0<s\le k.
\end{equation}
\par 
We introduce the subspaces 	
\begin{align*}
	\boldsymbol{X}_h:&=\left\{\boldsymbol{\phi}_h\in\boldsymbol{\Sigma}_{h}: c_h(\boldsymbol{\phi}_h,\nabla q_h)=0, \forall q_h\in U_h\right\}, \\
	\boldsymbol{Y}_h:&= \left\{\boldsymbol{v}_h\in \boldsymbol{V}_{h} :b_{h}(\boldsymbol{v}_{h},  \operatorname{curl} \boldsymbol{\phi}_{h}) =0, \forall \boldsymbol{\phi}_h \in \boldsymbol{\Sigma}_{h} \right\}.
\end{align*}
From stability \eqref{chStab} and coercivity \eqref{bhCoer}, we have the following orthogonal decompositions with respect to discrete inner products $c_h(\cdot,\cdot)$ and $b_h(\cdot,\cdot)$:
\begin{align}\label{disDec1}
	\boldsymbol{\Sigma}_h&=\nabla U_h \oplus^\bot \boldsymbol{X}_h , \\
	\boldsymbol{V}_h &=\curl \boldsymbol{\Sigma}_h \oplus^\bot \boldsymbol{Y}_h.\label{disDec2}
\end{align}
In order to prove the well-posedness of the discrete problem \eqref{disProblem}, we need the following discrete Friedrichs inequalities on $\boldsymbol{X}_h$ and $\boldsymbol{Y}_h$.
\begin{lemma}\label{lemDisFre}
	For all $\boldsymbol{\phi}_h \in \boldsymbol{X}_h$ and $\boldsymbol{v}_h\in \boldsymbol{Y}_h$, there hold
	\begin{align}\label{disFre1}
		\|\boldsymbol{\phi}_h\|&\lesssim  \|\operatorname*{curl} \boldsymbol{\phi}_h\|, \\
		\|\boldsymbol{v}_h\| &\lesssim \|\operatorname*{div}\boldsymbol{v}_h\|_1. \label{disFre2}
	\end{align}
\end{lemma}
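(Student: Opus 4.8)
The plan is to prove both inequalities by a single mechanism: reduce each discrete estimate to the continuous Friedrichs inequalities \eqref{Frieq2}--\eqref{Frieq3} through the commuting interpolations and the exactness of the discrete complex \eqref{CommutDigram}, never interpolating the (possibly irregular) scalar or vector potentials, but only the \emph{regular} representatives furnished by the continuous Hodge decompositions \eqref{decom1}--\eqref{decom2}. The discrete orthogonalities defining $\boldsymbol{X}_h$ and $\boldsymbol{Y}_h$ are then used to annihilate the ``kernel part'' that comes from discrete exactness.

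For \eqref{disFre1} I would take $\boldsymbol{\phi}_h\in\boldsymbol{X}_h$ and decompose it continuously by \eqref{decom1} as $\boldsymbol{\phi}_h=\nabla\psi+\boldsymbol{z}$ with $\psi\in H^1_0(\Omega)$ and $\boldsymbol{z}\in\boldsymbol{X}$. Since $\curl\boldsymbol{z}=\curl\boldsymbol{\phi}_h$, the inequality \eqref{Frieq2} gives $\|\boldsymbol{z}\|_1\lesssim\|\curl\boldsymbol{\phi}_h\|$, so $\boldsymbol{z}\in\boldsymbol{H}^1$ is smooth enough to interpolate. Setting $\boldsymbol{z}_h:=\boldsymbol{I}_h^e\boldsymbol{z}\in\boldsymbol{\Sigma}_{0,k}(\Omega)$ and using the commutativity of \eqref{CommutDigram} together with the fact that $\boldsymbol{I}_h$ fixes $\curl\boldsymbol{\phi}_h\in\boldsymbol{V}_{k-1,k+1}(\Omega)$, one gets $\curl\boldsymbol{z}_h=\boldsymbol{I}_h\curl\boldsymbol{z}=\curl\boldsymbol{\phi}_h$. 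Hence $\boldsymbol{\phi}_h-\boldsymbol{z}_h$ is curl-free in $\boldsymbol{\Sigma}_{0,k}(\Omega)$ and, by the exactness of the discrete complex, equals $\nabla r_h$ for some $r_h\in U_1(\Omega)$. The defining orthogonality of $\boldsymbol{X}_h$ kills this part, $c_h(\boldsymbol{\phi}_h,\nabla r_h)=0$, so that $c_h(\boldsymbol{\phi}_h,\boldsymbol{\phi}_h)=c_h(\boldsymbol{\phi}_h,\boldsymbol{z}_h)$. Applying the lower bound in \eqref{chStab}, a Cauchy--Schwarz step for the symmetric positive form $c_h$, the upper bound in \eqref{chStab}, and the $L^2$-stability of $\boldsymbol{I}_h^e$ yields $\|\boldsymbol{\phi}_h\|\lesssim c_h(\boldsymbol{\phi}_h,\boldsymbol{\phi}_h)^{1/2}\lesssim c_h(\boldsymbol{z}_h,\boldsymbol{z}_h)^{1/2}\lesssim\|\boldsymbol{z}_h\|\lesssim\|\boldsymbol{z}\|_1\lesssim\|\curl\boldsymbol{\phi}_h\|$, which is \eqref{disFre1}.

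For \eqref{disFre2} I would mirror the argument with $(\curl,c_h,\boldsymbol{\Sigma}_{0,k})$ replaced by $(\div,b_h,\boldsymbol{V}_{k-1,k+1})$. Given $\boldsymbol{v}_h\in\boldsymbol{Y}_h$, decompose by \eqref{decom2} as $\boldsymbol{v}_h=\curl\boldsymbol{\xi}+\boldsymbol{y}$ with $\boldsymbol{y}\in\boldsymbol{Y}$; then $\div\boldsymbol{y}=\div\boldsymbol{v}_h$ and \eqref{Frieq3} gives $\|\boldsymbol{y}\|_1\lesssim\|\div\boldsymbol{v}_h\|\le\|\div\boldsymbol{v}_h\|_1$. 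Setting $\boldsymbol{y}_h:=\boldsymbol{I}_h\boldsymbol{y}$ and using the commutativity \eqref{commut} with $J_h\div\boldsymbol{v}_h=\div\boldsymbol{v}_h$ gives $\div\boldsymbol{y}_h=J_h\div\boldsymbol{y}=\div\boldsymbol{v}_h$, so $\boldsymbol{v}_h-\boldsymbol{y}_h$ is discretely divergence-free and, by exactness, lies in $\curl\boldsymbol{\Sigma}_{0,k}(\Omega)$. The orthogonality defining $\boldsymbol{Y}_h$ then gives $b_h(\boldsymbol{v}_h,\boldsymbol{v}_h)=b_h(\boldsymbol{v}_h,\boldsymbol{y}_h)$, and the coercivity \eqref{bhCoer}, a Cauchy--Schwarz step in $b_h$, and the continuity \eqref{bhConti} reduce matters to bounding $\|\boldsymbol{y}_h\|_{h,\boldsymbol{H}(\operatorname*{grad-div};K)}$. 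Since $\div\boldsymbol{y}_h=\div\boldsymbol{v}_h$, the $h$-weighted divergence contributions are absorbed into $\|\div\boldsymbol{v}_h\|_1$, and the remaining $\|\boldsymbol{y}_h\|$ is controlled through Lemma \ref{lowbound} and the interpolation degrees of freedom by $\|\boldsymbol{y}\|_1+\|\div\boldsymbol{v}_h\|\lesssim\|\div\boldsymbol{v}_h\|_1$.

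The main obstacle is the regularity mismatch in the second inequality. The continuous decompositions only return parts as smooth as \eqref{Frieq2}--\eqref{Frieq3} permit (the divergence-/curl-free fields lie in $\boldsymbol{H}^1$, not higher), and, crucially, $\div\boldsymbol{y}=\div\boldsymbol{v}_h$ is a virtual function whose classical Sobolev regularity is only $H^1$. Consequently the interpolation-error estimate \eqref{Intpolation1}, which presupposes $\div\boldsymbol{v}\in H^{s+1}$, is \emph{not} available for controlling $\|\boldsymbol{I}_h\boldsymbol{y}\|$; instead I would bound $\|\boldsymbol{I}_h\boldsymbol{y}\|$ directly from Lemma \ref{lowbound}, exploiting that the degrees of freedom of $\boldsymbol{I}_h\boldsymbol{y}$ coincide with those of $\boldsymbol{y}$, together with the trace and polynomial inverse estimates already used in the proof of Lemma \ref{refLema4}. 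One must also verify that $\boldsymbol{I}_h\boldsymbol{y}$ and the commuting relation remain legitimate even though $\div\boldsymbol{y}$ is only virtual; this is fine because $\div\boldsymbol{y}=\div\boldsymbol{v}_h\in W_k(\Omega)$, so all the defining degrees of freedom are well defined and $J_h$ fixes it. The first inequality is free of this difficulty, since $\boldsymbol{I}_h^e$ is applied to $\boldsymbol{z}\in\boldsymbol{H}^1$ and only the $L^2$-stability of $\boldsymbol{I}_h^e$, rather than a divergence-regularity estimate, is required.
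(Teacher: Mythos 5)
Your proof is correct, and its skeleton is the same as the paper's: split the discrete function by the continuous Helmholtz--Hodge decomposition, use the continuous Friedrichs inequality on the regular ($\boldsymbol{H}^1$) component, transfer it to the discrete level with the commuting interpolation, and use the defining orthogonality of $\boldsymbol{X}_h$ (resp.\ $\boldsymbol{Y}_h$) to discard the exact part. (For \eqref{disFre2} the paper realizes the decomposition \eqref{decom2} concretely by solving a weak $\operatorname{curl}\operatorname{curl}$ problem for a potential $\boldsymbol{\rho}$ and writing $\boldsymbol{v}_h-\operatorname{curl}\boldsymbol{I}_h^e\boldsymbol{\rho}$, whereas you invoke discrete exactness to identify $\boldsymbol{v}_h-\boldsymbol{I}_h\boldsymbol{y}$ as an element of $\operatorname{curl}\boldsymbol{\Sigma}_{0,k}(\Omega)$; these are interchangeable, and the paper's variant has the advantage that the vector potential $\boldsymbol{\rho}$ is guaranteed regular enough for $\boldsymbol{I}_h^e$ to apply.) The one genuine divergence is the step where $\|\boldsymbol{I}_h\boldsymbol{y}\|$ is controlled: the paper bounds $\|\boldsymbol{w}-\boldsymbol{I}_h\boldsymbol{w}\|$ by the interpolation error estimate \eqref{Intpolation1}, which formally presupposes $\operatorname{div}\boldsymbol{w}\in H^{s+1}$ elementwise even though $\operatorname{div}\boldsymbol{w}=\operatorname{div}\boldsymbol{v}_h$ is a virtual function with only $H^1$ global regularity, so that step is used somewhat loosely; you instead bound $\|\boldsymbol{I}_h\boldsymbol{y}\|$ directly from Lemma \ref{lowbound}, using that the degrees of freedom \eqref{dofV4}--\eqref{dofV6} of $\boldsymbol{I}_h\boldsymbol{y}$ agree with the corresponding functionals of $\boldsymbol{y}\in\boldsymbol{H}^1$ and that $\operatorname{div}\boldsymbol{I}_h\boldsymbol{y}=\operatorname{div}\boldsymbol{v}_h$ by \eqref{commut}. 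That is a more robust route to the same bound $\|\boldsymbol{I}_h\boldsymbol{y}\|\lesssim\|\operatorname{div}\boldsymbol{v}_h\|_1$, and it buys you independence from any extra Sobolev regularity of the virtual divergence. For \eqref{disFre1} the paper merely cites the two-dimensional argument of the quad-curl reference; your argument via $\boldsymbol{z}\in\boldsymbol{X}$, \eqref{Frieq2}, the commutation $\operatorname{curl}\boldsymbol{I}_h^e\boldsymbol{z}=\boldsymbol{I}_h\operatorname{curl}\boldsymbol{z}=\operatorname{curl}\boldsymbol{\phi}_h$, and the stability \eqref{chStab} is exactly the intended three-dimensional extension, so nothing is missing there.
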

\begin{proof}
	Taking  \eqref{disFre2} as an example, we provide a detailed proof.  
	For any given $\boldsymbol{v}_h  \in \boldsymbol{Y}_h$, there exists a unique solution  $\boldsymbol{\rho} \in \boldsymbol{H}(\operatorname{curl}; \Omega) $ satisfying weakly
	\begin{equation}
		\operatorname{curl} \operatorname{curl} \boldsymbol{\rho}=\operatorname{curl} \boldsymbol{v}_h,  \quad \operatorname{div} \boldsymbol{\rho}=0 \quad \text{in } \Omega,  
		\quad \boldsymbol{\rho} \wedge \boldsymbol{n}=0  \quad \text{on } \Gamma. 
	\end{equation}
	Let $\boldsymbol{w} = \boldsymbol{v}_h - \operatorname{curl} \boldsymbol{\rho} $.  Then $\boldsymbol{w}$ weakly satisfies
	\begin{equation*}
		\operatorname{div} \boldsymbol{w} =\operatorname{div}\boldsymbol{v}_h, \quad \operatorname{curl} \boldsymbol{w} =0 \quad \text{in } \Omega,  \quad \boldsymbol{w} \cdot \boldsymbol{n} =0 \quad \text{on } \Gamma,
	\end{equation*}
	which implies $\w \in \boldsymbol{Y}(\Omega)$. By the Friedrichs inequality \eqref{Frieq3}, it follows that
	\begin{equation}\label{aux6}
		\|\boldsymbol{w}\|_{s}\lesssim  \|\operatorname{div}\boldsymbol{w}\|=\|\operatorname{div}\v_h\|.
	\end{equation}
	Hence, $\boldsymbol{I}_h \boldsymbol{w}$ is well-defined, and the commutativity between $\boldsymbol{I}_h$ and $J_h$ in diagram \eqref{CommutDigram} establishes that
	\begin{equation*}
		\div(\boldsymbol{I}_h\boldsymbol{w} -\boldsymbol{v}_h) = J_h \div \boldsymbol{w} -\div\v_h=J_h \div\boldsymbol{v}_h -\div\v_h =0.
	\end{equation*}
	Combined with the exactness of the discrete complex \eqref{homdisComplex}, there exists $\boldsymbol{\rho}_h \in \boldsymbol{\Sigma}_h$ satisfying
	\begin{equation}\label{aux1}
		\curl \boldsymbol{\rho}_h = \boldsymbol{v}_h- \boldsymbol{I}_h \boldsymbol{w}.
	\end{equation}
	According to the interpolation error estimate \eqref{Intpolation1} and \eqref{aux6},  we get
	\begin{equation}\label{aux2}
		\|\boldsymbol{I}_h\boldsymbol{w}\| \le \|\boldsymbol{w}\| +\|\boldsymbol{w} -\boldsymbol{I}_h \boldsymbol{w}\|\lesssim  \|\operatorname{div}\boldsymbol{v}_h\|_1.
	\end{equation}
	Using \eqref{aux1} and the fact that $\boldsymbol{v}_h \in \boldsymbol{Y}_h$, we have 
	\begin{equation*}
		b_h(\boldsymbol{v}_h, \boldsymbol{v}_h) = b_h(\boldsymbol{v}_h, \boldsymbol{I}_h\boldsymbol{w}+\operatorname{curl}\boldsymbol{\rho}_h)=b_h(\boldsymbol{v}_h, \boldsymbol{I}_h\boldsymbol{w}).
	\end{equation*}
	From coercivity \eqref{bhCoer} and continuity \eqref{bhConti}, it follows that  
	\begin{equation*}
		\|\boldsymbol{v}_h\|^2\lesssim b_h(\boldsymbol{v}_h, \boldsymbol{v}_h)=b_h(\boldsymbol{v}_h, \boldsymbol{I}_h\boldsymbol{w})\lesssim  \|\boldsymbol{v}_h\|_{\boldsymbol{V}(\Omega)}\|\boldsymbol{I}_h\boldsymbol{w}\|_{\boldsymbol{V}(\Omega)}.
	\end{equation*}
	Combining the above inequality with  \eqref{aux2} yields 
	\begin{equation*}
		\|\boldsymbol{v}_h\|^2\lesssim \|\boldsymbol{v}_h\|_{\boldsymbol{V}(\Omega)}\|\operatorname{div}\boldsymbol{v}_h\|_1.
	\end{equation*}
	which implies \eqref{disFre2}. \par 
	Due to the coercivity of $c_h(\cdot, \cdot)$ and the interpolation error estimate for $\boldsymbol{I}_h^e$ in  \cite[Theorem 4.5]{VEMforGener}, following the proof of \cite[Lemma 4.5]{ZJQQuadCurl}, we can get \eqref{disFre1} with obvious extension from two dimensions to three dimensions.
	The proof is complete.
\end{proof}
\par 
Now we present the main result of this subsection.
\begin{theorem}\label{ExiUni}
	The discrete problem \eqref{disProblem} has a unique solution $\boldsymbol{u}_h$ and $\boldsymbol{\varphi}_h$ with $p_h=0$.
\end{theorem}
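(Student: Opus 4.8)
The plan is to recast \eqref{disProblem} as a single discrete saddle-point problem and verify the two discrete Brezzi conditions, mirroring the argument used for the continuous problem in Theorem \ref{Theorem1}. Introduce the combined forms $A_h((\u_h,p_h),(\v_h,q_h)):=a_h(\u_h,\v_h)$ and $B_h((\v_h,q_h),\boldsymbol{\phi}_h):=b_h(\v_h,\curl\boldsymbol{\phi}_h)+c_h(\nabla q_h,\boldsymbol{\phi}_h)$. Using the symmetry of $b_h$ and $c_h$, and reading the third equation of \eqref{disProblem} as the $q_h$-component of the first block, the scheme becomes: find $((\u_h,p_h),\boldsymbol{\varphi}_h)$ with $A_h((\u_h,p_h),(\v_h,q_h))+B_h((\v_h,q_h),\boldsymbol{\varphi}_h)=(j_h,\div\v_h)$ for all $(\v_h,q_h)$, and $B_h((\u_h,p_h),\boldsymbol{\phi}_h)=0$ for all $\boldsymbol{\phi}_h$. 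As the resulting linear system is square, existence and uniqueness are equivalent, so it suffices to establish coercivity on the kernel together with the inf-sup condition.

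First I would characterize the discrete kernel $\boldsymbol{Z}_h:=\{(\v_h,q_h):B_h((\v_h,q_h),\boldsymbol{\phi}_h)=0,\ \forall\boldsymbol{\phi}_h\}$. Testing with $\boldsymbol{\phi}_h=\nabla q_h\in\boldsymbol{\Sigma}_{0,k}(\Omega)$, which is admissible since $\nabla U_1(\Omega)\subset\boldsymbol{\Sigma}_{0,k}(\Omega)$ in the exact complex, and using $\curl\nabla q_h=0$, yields $c_h(\nabla q_h,\nabla q_h)=0$, whence $q_h=0$ by \eqref{chStab} and the Poincar\'e inequality; consequently $\v_h\in\boldsymbol{Y}_h$. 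Coercivity of $A_h$ on $\boldsymbol{Z}_h$ then follows from the chain: by \eqref{ahStab}, $a_h(\v_h,\v_h)\gtrsim|\div\v_h|_{1,\Omega}^2$; by the Poincar\'e inequality applied to $\div\v_h\in W_k(\Omega)\subset H^1_0(\Omega)$ together with the discrete Friedrichs inequality \eqref{disFre2}, one gets $\|\v_h\|^2+\|\div\v_h\|^2+|\div\v_h|_{1,\Omega}^2\lesssim|\div\v_h|_{1,\Omega}^2$, so $A_h$ controls the full $\|\cdot\|_{\boldsymbol{H}(\operatorname{grad-div};\Omega)}$-norm on $\boldsymbol{Z}_h$.

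Next I would verify the discrete inf-sup condition for $B_h$ by repeating the test-function choice of Theorem \ref{Theorem1} at the discrete level. Given $\boldsymbol{\phi}_h\in\boldsymbol{\Sigma}_{0,k}(\Omega)$, use the discrete Helmholtz decomposition \eqref{disDec1} to write $\boldsymbol{\phi}_h=\nabla\lambda_h+\boldsymbol{z}_h$ with $\lambda_h\in U_1(\Omega)$ and $\boldsymbol{z}_h\in\boldsymbol{X}_h$, and choose $q_h=\lambda_h$, $\v_h=\curl\boldsymbol{\phi}_h\in\boldsymbol{V}_{k-1,k+1}(\Omega)$. The $c_h$-orthogonality of \eqref{disDec1} and the identity $\curl\boldsymbol{\phi}_h=\curl\boldsymbol{z}_h$ give $B_h((\v_h,q_h),\boldsymbol{\phi}_h)=b_h(\curl\boldsymbol{z}_h,\curl\boldsymbol{z}_h)+c_h(\nabla\lambda_h,\nabla\lambda_h)\gtrsim\|\curl\boldsymbol{z}_h\|^2+\|\nabla\lambda_h\|^2$, using \eqref{bhCoer} and \eqref{chStab}. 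The discrete Friedrichs inequality \eqref{disFre1}, namely $\|\boldsymbol{z}_h\|\lesssim\|\curl\boldsymbol{z}_h\|$, and the Poincar\'e inequality then upgrade the lower bound to $\gtrsim\|\boldsymbol{\phi}_h\|_{\boldsymbol{H}(\operatorname{curl};\Omega)}^2$, while $\div\v_h=0$ forces $\|\v_h\|_{\boldsymbol{H}(\operatorname{grad-div};\Omega)}=\|\curl\boldsymbol{\phi}_h\|\lesssim\|\boldsymbol{\phi}_h\|_{\boldsymbol{H}(\operatorname{curl};\Omega)}$. Together these yield the inf-sup bound with a constant independent of $h$, so the two Brezzi conditions deliver a unique, stable solution $(\u_h,\boldsymbol{\varphi}_h,p_h)$.

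Finally I would identify the multipliers. Testing the second equation of \eqref{disProblem} with $\boldsymbol{\phi}_h=\nabla p_h$ and invoking $\curl\nabla p_h=0$ gives $c_h(\nabla p_h,\nabla p_h)=0$, hence $p_h=0$ by \eqref{chStab}; the third equation shows $\boldsymbol{\varphi}_h\in\boldsymbol{X}_h$. Testing the first equation with $\v_h=\curl\boldsymbol{\varphi}_h$ and using $\div\curl\boldsymbol{\varphi}_h=0$ annihilates both the right-hand side $(j_h,\div\v_h)$ and the term $a_h(\u_h,\curl\boldsymbol{\varphi}_h)$, since $a_h$ depends only on divergences, leaving $b_h(\curl\boldsymbol{\varphi}_h,\curl\boldsymbol{\varphi}_h)=0$; thus $\curl\boldsymbol{\varphi}_h=0$ by \eqref{bhCoer}, and \eqref{disFre1} then forces $\boldsymbol{\varphi}_h=0$. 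The \emph{main obstacle} is the inf-sup step, but it is essentially inherited from the continuous argument once the discrete Helmholtz decomposition \eqref{disDec1} and the discrete Friedrichs inequalities of Lemma \ref{lemDisFre} are in hand; the remaining steps are routine reorganizations of the discrete equations.
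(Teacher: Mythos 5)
Your proposal is correct and follows essentially the same route as the paper: the same saddle-point recasting with $A_h$ and $B_h$, the same kernel characterization via $\boldsymbol{\phi}_h=\nabla q_h$, the same coercivity chain through \eqref{ahStab}, the Poincar\'e inequality and \eqref{disFre2}, and the same inf-sup test functions $q_h=\lambda_h$, $\v_h=\operatorname{curl}\boldsymbol{\phi}_h$ built from the decomposition \eqref{disDec1}. The only cosmetic difference is in the final identification of $\boldsymbol{\varphi}_h=0$: you observe directly that $\boldsymbol{\varphi}_h\in\boldsymbol{X}_h$ and apply \eqref{disFre1}, whereas the paper first splits $\boldsymbol{\varphi}_h=\nabla\lambda_h+\boldsymbol{z}_h$ and kills each piece separately; both are valid.
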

\begin{proof}
	Following the proof of Theorem \ref{Theorem1}, we set $ 
	B_h\left((\boldsymbol{v}_h, q_h), \boldsymbol{\phi}_h\right)=b_h(\boldsymbol{v}_h, \operatorname{curl}\boldsymbol{\phi}_h)+c_h(\nabla q_h, \boldsymbol{\phi}_h)$ 
	and introduce the space
	\begin{equation*}
		\boldsymbol{Z}_h=\{(\boldsymbol{v}_h, q_h) \in \boldsymbol{V}_h\times U_h:B_h\left((\boldsymbol{v}_h, q_h), \boldsymbol{\phi}_h\right) =0, \forall \boldsymbol{\phi}_h \in \boldsymbol{\Sigma}_h\}.
	\end{equation*}
	Any $(\boldsymbol{v}_h, q_h) \in \boldsymbol{Z}_h$ can be identified with $\boldsymbol{v}_h \in \boldsymbol{Y}_h$ and $q_h = 0$. Indeed, choosing $\boldsymbol{\phi}_h = \nabla q_h$ yields $B_h\left((\boldsymbol{v}_h, q_h), \boldsymbol{\phi}_h\right) = c_h(\nabla q_h, \nabla q_h) = 0$, whence, by the stability \eqref{chStab} and the Poincar\'{e} inequality, we obtain $q_h = 0$.\par
	Setting $A_h\left((\boldsymbol{u}_h, p_h), (\boldsymbol{v}_h, q_h)\right) := a_h(\boldsymbol{u}_h, \boldsymbol{v}_h)$, we rewrite \eqref{disProblem} as
	\begin{equation*}
		\begin{aligned}
			A_h((\boldsymbol{u}_h,p_h),(\boldsymbol{v}_h,q_h))+B_h((\boldsymbol{v}_h,q_h),\boldsymbol{\varphi}_h) &= (\boldsymbol{f}_h, \boldsymbol{v}_h),\\
			&\quad \forall (\boldsymbol{v}_h,q_h)\in \boldsymbol{V}_h\times U_h,\\
			B_h((\boldsymbol{u}_h,p_h),\boldsymbol{\phi}_h)&=0,\quad \forall \boldsymbol{\phi}_h \in \boldsymbol{\Sigma}_h.
		\end{aligned}
	\end{equation*}
	According to the stability \eqref{ahStab}, the Poincar\'{e} inequality, and Lemma \ref{lemDisFre}, we obtain the coercivity of $a_h(\cdot,\cdot)$ on $\boldsymbol{Y}_h$ 
	\begin{equation}\label{ahKerCoer}
		\begin{aligned}
			a_h(\boldsymbol{v}_h,\boldsymbol{v}_h)&\gtrsim (\nabla \div \boldsymbol{v}_h,\nabla\div\boldsymbol{v}_h) \gtrsim \|\div\boldsymbol{v}_h\|^2_1 \\
			& \gtrsim \|\boldsymbol{v}_h\|^2_{\boldsymbol{V}(\Omega)}, \quad \forall \boldsymbol{v}_h\in \boldsymbol{Y}_h,
		\end{aligned}
	\end{equation}
	which implies the coercivity of $A_h(\cdot,\cdot)$ on $\boldsymbol{Z}_h$ 
	\begin{equation}
		\begin{aligned}
			A_h\left((\boldsymbol{v}_h, q_h), (\boldsymbol{v}_h, q_h)\right) &= a_h(\boldsymbol{v}_h,\boldsymbol{v}_h)
			\gtrsim \|\boldsymbol{v}_h\|^2_{\boldsymbol{V}(\Omega)}\\
			&=\|\boldsymbol{v}_h\|_{\boldsymbol{V}(\Omega)}^2+\|q_h\|^2_1, \quad \forall (\boldsymbol{v}_h, q_h)\in 	\boldsymbol{Z}_h.
		\end{aligned}
	\end{equation}
	\par 
	Next, we present the discrete inf-sup condition for $B_h(\cdot, \cdot)$. 
	For any  $\boldsymbol{\phi}_h\in\boldsymbol{\Sigma}_h$,
	from the  decomposition \eqref{disDec1}, there exist  $\lambda_h \in U_h$ and $\boldsymbol{z}_h\in \boldsymbol{X}_h$  such that 
	\begin{equation*}
		\boldsymbol{\phi}_h = \nabla \lambda_h + \boldsymbol{z}_h.
	\end{equation*} 
	Then, taking $q_h =\lambda_h, \boldsymbol{v}_h=\operatorname{curl} \boldsymbol{\phi}_h$, using the coercivity \eqref{bhCoer}, the stability \eqref{chStab}, the discrete Friedrichs inequality \eqref{disFre1} and the Poincar\'{e} inequality, we have
	\begin{equation*}
		\begin{aligned}
			B_h\left((\boldsymbol{v}_h, q_h), \boldsymbol{\phi}_h\right) &=b_h(\operatorname{curl}\boldsymbol{\phi}_h, \operatorname{curl}\boldsymbol{\phi}_h )+c_h(\nabla \lambda_h, \boldsymbol{\phi}_h)\\
			&=b_h(\operatorname{curl}\boldsymbol{z}_h, \operatorname{curl}\boldsymbol{z}_h)+ c_h(\nabla \lambda_h, \nabla \lambda_h)\\
			&\gtrsim  (\operatorname{curl}\boldsymbol{z}_h, \operatorname{curl}\boldsymbol{z}_h)+(\nabla \lambda_h, \nabla \lambda_h) \\
			& \gtrsim \|\boldsymbol{z}_h\|^2_{\boldsymbol{H}(\operatorname*{curl};\Omega)}+\|\nabla\lambda_h\|^2_{\boldsymbol{H}(\operatorname*{curl};\Omega)}\\
			&\ge\|\boldsymbol{\phi}_h\|^2_{\boldsymbol{H}(\operatorname*{curl};\Omega)}.
		\end{aligned}
	\end{equation*}
	Thus, the coercivity on $\boldsymbol{Z}_h$ 
	and Babu$\check{\text{s}}$ka-Brezzi
	condition are satisfied, which implies that \eqref{disProblem} has a unique solution. \par 
	Finally, taking $\boldsymbol{\phi}_h =\nabla p_h$ in the second equation of \eqref{disProblem}, we get $c_h(\nabla p_h, \nabla p_h) = 0$. The stability \eqref{chStab} and the Poincar\'{e} inequality lead to $p_h=0$. The proof is complete.  
\end{proof}
\begin{remark}\label{RemarkPhi}
	While the continuous solution satisfies $\boldsymbol{\varphi} = 0$, its discrete counterpart $\boldsymbol{\varphi}_h$ obtained from the discrete scheme \eqref{disProblem} is  nonzero.
	This discrepancy arises because, in the context of VEM,  the right-hand side term $(\boldsymbol{f}, \boldsymbol{v}_h)$ cannot be computed exactly. Consequently, the curl-free field $\boldsymbol{f}$ is replaced by its projection $\boldsymbol{f}_h$, which may not preserve the curl-free property. This  leads to the discrete norm of $\boldsymbol{\varphi}_h$
	\begin{equation*}
		b_h(\curl\boldsymbol{\varphi}_h,\curl\boldsymbol{\varphi}_h)=(\boldsymbol{f}_h,\curl\boldsymbol{\varphi}_h) \ne 0,
	\end{equation*} 
	by taking  $\boldsymbol{v}_h = \operatorname{curl} \boldsymbol{\varphi}_h$ in the first equation of \eqref{disProblem}.  
	\par
	Alternatively, suppose that $\boldsymbol{f} = -\nabla j$ is explicitly known and let $j_h$ be an appropriate polynomial projection of $j$. Then $-\nabla j_h$ serves as an approximation to $\boldsymbol{f}$. By reformulating the right-hand side as $(j_h, \operatorname{div} \boldsymbol{v}_h)$ and testing  the first equation of \eqref{disProblem} with $\boldsymbol{v}_h = \operatorname{curl} \boldsymbol{\varphi}_h$ , we have
	\begin{equation*}
		\|\operatorname{curl}\boldsymbol{\varphi}_h\|^2
		\lesssim b_h(\curl\boldsymbol{\varphi}_h,\curl\boldsymbol{\varphi}_h)=(j_h,\div\operatorname{curl}\boldsymbol{\varphi}_h) =0,
	\end{equation*}
	which, together with \eqref{disFre1}, yields $\boldsymbol{\varphi}_h = 0$.
\end{remark}

\begin{remark}\label{disInfSup}
	From the proof of Theorem \ref{ExiUni}, we also have the discrete inf-sup condition for $b_h(\cdot,\cdot)$.
	There exists a positve constant $\beta $ independent of $h$ such that 
	\begin{equation}\label{disLBB}
		\sup _{\boldsymbol{v}_{h} \in \boldsymbol{V}_h /\{0\}} \frac{b_h\left(\boldsymbol{v}_h, \operatorname{curl} \boldsymbol{\phi}_h\right)}{\left\|\boldsymbol{v}_{h}\right\|_{\boldsymbol{V}( \Omega)}} \geq \beta\|\operatorname{curl}\boldsymbol{\phi}_{h}\|,  \quad \forall \boldsymbol{\phi}_{h} \in \boldsymbol{X}_h.
	\end{equation}
\end{remark}
\subsection{Convergence analysis}
By using the standard Dupont-Scott theory \cite{Polyesti}, we have the following local approximation results.
\begin{lemma}
	Assume that the polyhedron $K$ satisfies the regularity assumptions $\boldsymbol{(A1)}$-$\boldsymbol{(A3)}$. For all $\v \in \boldsymbol{H}^s(\Omega)$ and $\div\v \in H^{s+1}(\Omega)$,
	there exist  $\v^{\pi}_{k-1}\in \boldsymbol{P}^{\text{dc}}_{k-1}(\Omega)$ and $ \v^{\pi}_{k+1} \in \boldsymbol{P}^{\text{dc}}_{k+1}(\Omega)$ with $s\le k$ such that 
	\begin{align}
		\label{polyappro}
		|\v -\v^{\pi}_{k-1}|_{m, K}&\lesssim h_K^{l-m}|\v|_{l, K}, \quad 0\le m\le l\le s, \\
		\label{divpolyappro}
		|\operatorname{div}(\v-\v^{\pi}_{k+1})|_{m, K}&\lesssim h_K^{l-m}|\operatorname{div}\v|_{l, K}, \quad 0\le m \le  l\le s+1,
	\end{align}
	where  $\boldsymbol{P}_k^{\text{dc}}(\Omega) = \left\{\boldsymbol{v}\in \boldsymbol{L}^2(\Omega); \boldsymbol{v}|_K \in \boldsymbol{P}_k(K), \forall K \in T_h \right\}$.
\end{lemma}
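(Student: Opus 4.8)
The plan is to establish both bounds element by element via the Dupont--Scott theory of averaged Taylor polynomials \cite{Polyesti}. Since $\boldsymbol{(A1)}$ guarantees that each $K$ is star-shaped with respect to a ball of radius $\gtrsim h_K$, the degree-$m$ averaged Taylor polynomial operator $Q^m_K$ (applied componentwise to vector fields) is well defined on $H^{m+1}(K)$ and obeys the classical local bound
\begin{equation*}
	|w - Q^m_K w|_{s, K} \lesssim h_K^{t-s}|w|_{t, K}, \quad 0 \le s \le t \le m+1,
\end{equation*}
with a constant depending only on $\mu$ and $m$. This is the only place the mesh regularity enters, and it is what makes the hidden constant uniform over $\mathcal{T}_h$.

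First I would dispatch \eqref{polyappro} by setting $\v^{\pi}_{k-1}|_K := Q^{k-1}_K \v$, which lies in $\boldsymbol{P}^{\text{dc}}_{k-1}(\Omega)$. Applying the displayed estimate componentwise with $m = k-1$, so that the regularity index $t$ ranges up to $k$, gives \eqref{polyappro} at once, with no further work.

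The substance of the lemma is \eqref{divpolyappro}, and here I would take $\v^{\pi}_{k+1}|_K := Q^{k+1}_K \v \in \boldsymbol{P}_{k+1}(K)$ and exploit the commutation of the averaged Taylor operator with differentiation, namely $\partial_i Q^{m}_K w = Q^{m-1}_K(\partial_i w)$. Summing the componentwise relation $\partial_i(Q^{k+1}_K v_i) = Q^{k}_K(\partial_i v_i)$ over $i$ yields the key identity
\begin{equation*}
	\div \v^{\pi}_{k+1} = \div\bigl(Q^{k+1}_K \v\bigr) = Q^{k}_K(\div \v) \quad \text{in } K,
\end{equation*}
so that $\div \v^{\pi}_{k+1}$ is precisely the scalar degree-$k$ averaged Taylor polynomial of $\div \v \in H^{k+1}(K)$. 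Feeding the scalar function $\div \v$ into the Dupont--Scott bound with $m = k$ then produces \eqref{divpolyappro} for $0 \le m \le l \le k+1$.

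The only genuinely nontrivial step is the commutation identity $\div Q^{k+1}_K = Q^{k}_K \div$; once it is recorded, both estimates collapse to the classical scalar Bramble--Hilbert/Dupont--Scott inequality, and assumptions $\boldsymbol{(A2)}$--$\boldsymbol{(A3)}$ play no role since these are purely interior polynomial approximation bounds. I would therefore expect the write-up to spend essentially all of its effort on citing or verifying this commutation property, with the remaining degree bookkeeping being routine.
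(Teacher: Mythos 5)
Your argument is correct and is exactly the route the paper intends: the paper gives no proof of this lemma at all, merely citing the Dupont--Scott theory, and your write-up (averaged Taylor polynomials on the star-shaped element, the classical Bramble--Hilbert bound, and the commutation $\div Q^{k+1}_K = Q^k_K\,\div$ to reduce \eqref{divpolyappro} to the scalar case) is the standard instantiation of that citation. No gaps; the only point worth recording explicitly is that the commutation identity needs $\v\in \boldsymbol{W}^{1,1}(K)$, which holds since $k\ge 1$.
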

\begin{theorem}\label{Theorem4}
	Suppose that $(\boldsymbol{u}, \boldsymbol{\varphi}, p) \in \boldsymbol{V}_0(\Omega) \times \boldsymbol{H}_0(\curl;\Omega)\times H^1_0(\Omega) $ is the solution of the  problem \eqref{mixForm} with $\boldsymbol{\varphi} =0$, $p=0$, 
	and  $(\boldsymbol{u}_h, \boldsymbol{\varphi}_h, p_h) \in \boldsymbol{V}_h\times \boldsymbol{ \Sigma}_h\times U_h$  the solution of the discrete scheme \eqref{disProblem} with  $p_h=0$. There holds 
	\begin{equation*}
		\begin{aligned}
			\|\boldsymbol{u}-\boldsymbol{u}_h\|_{\boldsymbol{V}(\Omega)}\lesssim & \inf _{\boldsymbol{z}_{h} \in \boldsymbol{Y}_{h}}\left\|\boldsymbol{u}-\boldsymbol{z}_{h}\right\|_{\boldsymbol{V}(\Omega)} \\
			& +\inf _{\boldsymbol{v}^{\pi}_{k+1} \in \boldsymbol{P}_{k+1}^{\mathrm{dc}}(\Omega)}\left|\div\left(\boldsymbol{u}-\boldsymbol{v}^{\pi}_{k+1}\right)\right|_1+\left\|\boldsymbol{f}-\boldsymbol{f}_{h}\right\|. 
		\end{aligned}
	\end{equation*}
\end{theorem}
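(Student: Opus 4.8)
The plan is to run a Strang-type argument built on the coercivity of $a_h$ on the discrete kernel $\boldsymbol{Y}_h$ established in \eqref{ahKerCoer}. First I would record the reduced equations: since $\boldsymbol{\varphi}=\boldsymbol{\varphi}_h=0$ and $p=p_h=0$, the continuous solution obeys $(\nabla\div\boldsymbol{u},\nabla\div\boldsymbol{v})=(\boldsymbol{f},\boldsymbol{v})$ for all $\boldsymbol{v}\in\boldsymbol{H}_0(\operatorname{grad-div};\Omega)$, while the discrete solution $\boldsymbol{u}_h\in\boldsymbol{Y}_h$ satisfies $a_h(\boldsymbol{u}_h,\boldsymbol{v}_h)=(j_h,\div\boldsymbol{v}_h)$ for all $\boldsymbol{v}_h$, the $b_h$-coupling dropping because $\curl\boldsymbol{\varphi}_h=0$ and $\boldsymbol{u}_h\in\boldsymbol{Y}_h$. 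Writing $\boldsymbol{f}=-\nabla j$ and integrating by parts (the boundary term vanishes as $\boldsymbol{v}\cdot\boldsymbol{n}=0$ on $\partial\Omega$) turns the continuous right-hand side into $(j,\div\boldsymbol{v})$, so the two problems become directly comparable.

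Next, fix an arbitrary $\boldsymbol{z}_h\in\boldsymbol{Y}_h$ and set $\boldsymbol{\delta}_h:=\boldsymbol{u}_h-\boldsymbol{z}_h$, which again lies in $\boldsymbol{Y}_h$. By \eqref{ahKerCoer} I have $\|\boldsymbol{\delta}_h\|^2_{\boldsymbol{H}(\operatorname{grad-div};\Omega)}\lesssim a_h(\boldsymbol{\delta}_h,\boldsymbol{\delta}_h)=a_h(\boldsymbol{u}_h,\boldsymbol{\delta}_h)-a_h(\boldsymbol{z}_h,\boldsymbol{\delta}_h)$, and the first term equals $(j_h,\div\boldsymbol{\delta}_h)$ by the discrete equation. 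To handle $a_h(\boldsymbol{z}_h,\boldsymbol{\delta}_h)$ I would insert a piecewise polynomial $\boldsymbol{v}^{\pi}_{k+1}\in\boldsymbol{P}^{\mathrm{dc}}_{k+1}(\Omega)$, splitting it as $a_h^K(\boldsymbol{z}_h-\boldsymbol{v}^{\pi}_{k+1},\boldsymbol{\delta}_h)+a_h^K(\boldsymbol{v}^{\pi}_{k+1},\boldsymbol{\delta}_h)$ on each $K$; the consistency property \eqref{ahConsist}, together with the symmetry of $a_h^K$, replaces the second summand by $(\nabla\div\boldsymbol{v}^{\pi}_{k+1},\nabla\div\boldsymbol{\delta}_h)_K$. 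Subtracting and adding the continuous identity $(\nabla\div\boldsymbol{u},\nabla\div\boldsymbol{\delta}_h)=(j,\div\boldsymbol{\delta}_h)$ then yields the error representation
\begin{equation*}
a_h(\boldsymbol{\delta}_h,\boldsymbol{\delta}_h)=(j_h-j,\div\boldsymbol{\delta}_h)+\sum_{K}(\nabla\div(\boldsymbol{u}-\boldsymbol{v}^{\pi}_{k+1}),\nabla\div\boldsymbol{\delta}_h)_K-\sum_{K} a_h^K(\boldsymbol{z}_h-\boldsymbol{v}^{\pi}_{k+1},\boldsymbol{\delta}_h).
\end{equation*}

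Finally I would bound the three contributions. The first is controlled by $\|j-j_h\|\,\|\boldsymbol{\delta}_h\|_{\boldsymbol{H}(\operatorname{grad-div};\Omega)}$ via Cauchy--Schwarz; the second by $|\div(\boldsymbol{u}-\boldsymbol{v}^{\pi}_{k+1})|_1\,\|\boldsymbol{\delta}_h\|_{\boldsymbol{H}(\operatorname{grad-div};\Omega)}$; and for the third the continuity half of \eqref{ahStab} gives $a_h^K(\boldsymbol{z}_h-\boldsymbol{v}^{\pi}_{k+1},\boldsymbol{\delta}_h)\lesssim|\div(\boldsymbol{z}_h-\boldsymbol{v}^{\pi}_{k+1})|_{1,K}\,|\div\boldsymbol{\delta}_h|_{1,K}$, after which a triangle inequality splits $|\div(\boldsymbol{z}_h-\boldsymbol{v}^{\pi}_{k+1})|_1\lesssim\|\boldsymbol{u}-\boldsymbol{z}_h\|_{\boldsymbol{H}(\operatorname{grad-div};\Omega)}+|\div(\boldsymbol{u}-\boldsymbol{v}^{\pi}_{k+1})|_1$. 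Dividing through by $\|\boldsymbol{\delta}_h\|_{\boldsymbol{H}(\operatorname{grad-div};\Omega)}$, combining with $\|\boldsymbol{u}-\boldsymbol{u}_h\|_{\boldsymbol{H}(\operatorname{grad-div};\Omega)}\le\|\boldsymbol{u}-\boldsymbol{z}_h\|_{\boldsymbol{H}(\operatorname{grad-div};\Omega)}+\|\boldsymbol{\delta}_h\|_{\boldsymbol{H}(\operatorname{grad-div};\Omega)}$, and taking the infima over $\boldsymbol{z}_h\in\boldsymbol{Y}_h$ and $\boldsymbol{v}^{\pi}_{k+1}\in\boldsymbol{P}^{\mathrm{dc}}_{k+1}(\Omega)$ delivers the claim. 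The main obstacle is the bookkeeping in the error representation: one must ensure that $\boldsymbol{\delta}_h$ is an admissible test function in both the discrete equation and the continuous weak form so that the $b_h$-coupling terms genuinely vanish, and that the polynomial consistency \eqref{ahConsist} is invoked on the correct argument by exploiting the symmetry of $a_h^K$.
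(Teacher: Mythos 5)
Your proposal is correct and follows essentially the same route as the paper: coercivity of $a_h$ on $\boldsymbol{Y}_h$ applied to $\boldsymbol{\delta}_h$, insertion of a piecewise polynomial $\boldsymbol{v}^{\pi}_{k+1}$ combined with the polynomial consistency \eqref{ahConsist}, substitution of the continuous identity $(\nabla\div\boldsymbol{u},\nabla\div\boldsymbol{\delta}_h)=(j,\div\boldsymbol{\delta}_h)$, and a final triangle inequality with infima. The only differences are cosmetic (sign convention for $\boldsymbol{\delta}_h$ and making the Cauchy--Schwarz step for $a_h^K$ explicit), so the argument matches the paper's proof.
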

\begin{proof}
	For any $\boldsymbol{z}_h\in \boldsymbol{Y}_h $ and $\boldsymbol{v}^{\pi}_{k+1} \in \boldsymbol{P}^{dc}_{k+1}(\Omega)$, setting $\boldsymbol{\delta}_h = \boldsymbol{z}_h -\boldsymbol{u}_h \in \boldsymbol{Y}_h$, we obtain 
	\begin{equation*}
		\begin{aligned}
			\|\boldsymbol{z}_h& -\boldsymbol{u}_h\|^2_{\boldsymbol{V}(\Omega)}\\
			&\lesssim  a_h(\boldsymbol{z}_h-\boldsymbol{u}_h, \boldsymbol{\delta}_h)
			\quad (\text{use the  coercivity } \eqref{ahKerCoer})\\
			&=\sum_{K\in \mathcal{T}_h}(a_h^K(\boldsymbol{z}_h-\boldsymbol{v}^{\pi}_{k+1}, \boldsymbol{\delta}_h)+a_h^K(\boldsymbol{v}^{\pi}_{k+1}, \boldsymbol{\delta}_h)) \nonumber \\
			&\quad -(\boldsymbol{f}_h, \boldsymbol{\delta}_h) \quad (\text{use } \eqref{disProblem} \text{ with } b_h(\operatorname{curl}\boldsymbol{\varphi}_h,\boldsymbol{ \delta}_h)=0)\\
			&=\sum_{K\in \mathcal{T}_h}\left(a_h^K(\boldsymbol{z}_h-\boldsymbol{v}^{\pi}_{k+1}, \boldsymbol{\delta}_h)+(\nabla\div\boldsymbol{v}^{\pi}_{k+1}, \nabla\div\boldsymbol{\delta}_h)_K\right) \nonumber\\
			&\quad-(\boldsymbol{f}_h, \boldsymbol{\delta}_h)\quad (\text{use the consistency } \eqref{ahConsist})\\
			&=\sum_{K\in \mathcal{T}_h}\left(a_h^K(\boldsymbol{z}_h-\boldsymbol{v}^{\pi}_{k+1}, \boldsymbol{\delta}_h)+(\nabla\div(\boldsymbol{v}^{\pi}_{k+1}-\boldsymbol{u}), \nabla\div\boldsymbol{\delta}_h)_K\right)\nonumber \\
			&\quad+(\boldsymbol{f}-\boldsymbol{f}_h, \boldsymbol{\delta}_h) \quad (\text{use } \eqref{equivalentProblem}).
		\end{aligned}
	\end{equation*}
	It follows from the stability \eqref{ahStab} that 
	\begin{equation*}
		\|\boldsymbol{z}_h-\boldsymbol{u}_h\|_{\boldsymbol{V}(\Omega)}\lesssim (|\operatorname{div}(\boldsymbol{u-z}_h)|_1+|\operatorname{div}(\boldsymbol{u}-\boldsymbol{v}^{\pi}_{k+1})|_1+\|\boldsymbol{f}-\boldsymbol{f}_h\|).
	\end{equation*} 
	Using the triangle inequality, we get the desired result.
\end{proof}
\begin{theorem}\label{allNormError}
	Under the assumptions of Theorem \ref{Theorem4}. 
	If $\boldsymbol{f}\in \boldsymbol{H}^{s}(\Omega)$ and $\boldsymbol{u} \in \boldsymbol{H}^{s}(\Omega)$ with $\operatorname{div}\boldsymbol{u} \in H^{s+1}(\Omega)$,  $\frac{1}{2} < s \le k$, we have 
	\begin{equation}\label{theor5}
		\|\boldsymbol{u}-\boldsymbol{u}_h\|_{\boldsymbol{V}(\Omega)} \lesssim h^{s}(\|\boldsymbol{u}\|_{s}+\|\operatorname{div}\boldsymbol{u}\|_{s+1}+\|\boldsymbol{f}\|_{s}),
	\end{equation}
	\begin{equation}
		\|\boldsymbol{ \varphi}_h\|\lesssim 	\|\operatorname{curl}\boldsymbol{ \varphi}_h\|\lesssim 
		h^s \|\boldsymbol{f}\|_s.
	\end{equation}
\end{theorem}
\begin{proof}
	Using the decomposition \eqref{disDec1}, we rewrite \eqref{disDec2} as 
	\begin{equation*}
		\boldsymbol{V}_{k-1,k+1}(\Omega) = \curl \boldsymbol{X}_h \oplus^\bot \boldsymbol{Y}_h.
	\end{equation*}
	Then there exists a function $\boldsymbol{\psi}_h \in \boldsymbol{X}_h$  such that $ \boldsymbol{I}_h \bu - \curl \boldsymbol{\psi}_h \in \boldsymbol{Y}_h$.
	On the other hand, by the discrete Friedrichs inequality \eqref{disFre1} on $\boldsymbol{X}_h$, the curl operator is injective.
	Combining the above properties with the coercivity \eqref{bhCoer} of $b_h(\cdot,\cdot)$, we have
	\begin{equation}\label{curlLBB}
		\|\operatorname{curl}\boldsymbol{\psi}_h\|\lesssim \sup_{\boldsymbol{\phi}_{h} \in \boldsymbol{X}_h/\{0\} }\frac{b_h(\curl \boldsymbol{\psi}_h , \operatorname{curl}\boldsymbol{\phi}_h)}{\|\operatorname{curl}\boldsymbol{\phi}_h\|} = \sup_{\boldsymbol{\phi}_{h} \in \boldsymbol{X}_h/\{0\} }\frac{b_h(\boldsymbol{I}_h \boldsymbol{u}, \operatorname{curl}\boldsymbol{\phi}_h)}{\|\operatorname{curl}\boldsymbol{\phi}_h\|}.
	\end{equation}
	Let $\bu^{\pi}_{k-1} \in \boldsymbol{P}_{k-1}^{dc}(\Omega)$ be the approximation to $\boldsymbol{u}$ satisfying \eqref{polyappro}.
	The second equation in \eqref{equivalentProblem} leads to  $(\boldsymbol{u}, \operatorname{curl}\boldsymbol{\phi}_h) =0$ for all $\boldsymbol{\phi}_h \in \boldsymbol{\Sigma}_{0, k}(\Omega)$. Furthermore,
	using consistency \eqref{bhConsist}, we obtain
	\begin{equation*}
		\begin{aligned}
			b_h(\boldsymbol{I}_h\boldsymbol{u}, \operatorname{curl}\boldsymbol{\phi}_h)=(\bu^{\pi}_{k-1}-\boldsymbol{u}, \operatorname{curl}\boldsymbol{\phi}_h)+b_h(\boldsymbol{I}_h\boldsymbol{u}-\bu^{\pi}_{k-1}, \operatorname{curl}\boldsymbol{\phi}_h).
		\end{aligned}
	\end{equation*}
	For the first term on the right-hand side of the above equation, it holds
	\begin{equation}\label{aux7}
		(\bu^{\pi}_{k-1}-\boldsymbol{u}, \operatorname{curl}\boldsymbol{\phi}_h)\lesssim h^s\|\boldsymbol{u}\|_s\|\operatorname{curl}\boldsymbol{\phi}_h\|.
	\end{equation}
	For the second term, according to the continuity \eqref{bhConti}, the definition of the scaled norm $\|\cdot\|_{h,\boldsymbol{V}(K)}$, interpolation errors \eqref{Intpolation1}, \eqref{Intpolation2}, \eqref{Intpolation3}, we get
	\begin{equation*}
		\begin{aligned}
			b_h^K(&\boldsymbol{I}_h\boldsymbol{u}-\bu^{\pi}_{k-1},  \operatorname{curl} \boldsymbol{\phi}_h)\nonumber\\
			&\lesssim \|\boldsymbol{I}_h\boldsymbol{u}-\bu^{\pi}_{k-1}\|_{h,\boldsymbol{V}(K)}\|\operatorname{curl}\boldsymbol{\phi}_h\|_{h,\boldsymbol{V}(K)}\\
			&\lesssim (\|\bu-\boldsymbol{I}_h\bu\|_{h,\boldsymbol{V}(K)}+\|\bu - \bu^{\pi}_{k-1}\|_{h,\boldsymbol{V}(K)})\|\operatorname{curl}\boldsymbol{\phi}_h\|_K\\
			&\lesssim\left(\|\bu-\boldsymbol{I}_h\bu\|_K+h_K\|\div(\bu-\boldsymbol{I}_h\bu)\|_K+h_K^2|\div(\bu-\boldsymbol{I}_h\bu)|_{1,K}\right.\\
			&\quad+ \left. \|\bu-\bu_{k-1}^{\pi}\|_K+h_K|\bu-\bu_{k-1}^{\pi}|_{1,K}+h_K^2|\bu-\bu_{k-1}^{\pi}|_{2,K} \right)\|\operatorname{curl}\boldsymbol{\phi}_h\|_K\\
			&\lesssim h_K^s(\|\boldsymbol{u}\|_{s,K}+h_K^{2}\|\operatorname{div}\boldsymbol{u}\|_{s+1,K})\|\operatorname{curl}\boldsymbol{\phi}_h\|_K,
		\end{aligned}
	\end{equation*}
	which,  together with \eqref{aux7} and \eqref{curlLBB}, yields
	\begin{equation*}
		\|\operatorname{curl}\boldsymbol{\psi}_h\|\lesssim h^s(\|\boldsymbol{u}\|_s+h^{2}\|\operatorname{div}\boldsymbol{u}\|_{s+1}).
	\end{equation*}
	Then we have
	\begin{equation*}
		\begin{aligned}
			\inf _{\boldsymbol{z}_{h} \in \boldsymbol{Y}_{h}}\left\|\boldsymbol{u}-\boldsymbol{z}_{h}\right\|_{\boldsymbol{V}(\Omega)}&\le \|\boldsymbol{u}-\boldsymbol{I}_h\boldsymbol{u}\|_{\boldsymbol{V}(\Omega)}+\|\operatorname{curl}\boldsymbol{\psi}_h\|\\
			&\lesssim  h^s(\|\boldsymbol{u}\|_s+\|\operatorname{div}\boldsymbol{u}\|_{s+1}),
		\end{aligned}
	\end{equation*}
	which, combined with  Theorem \ref{Theorem4}, implies \eqref{theor5}.
	Finally, since $\boldsymbol{f}$ is curl‑free, the coercivity \eqref{bhCoer} gives
	\begin{align*}
		\|\operatorname{curl}\boldsymbol{\varphi}_h\|^2 &\lesssim b_h(\operatorname{curl}\boldsymbol{\varphi}_h,\operatorname{curl}\boldsymbol{\varphi}_h)  = (\boldsymbol{ f}_h,\operatorname{curl}\boldsymbol{\varphi}_h)=(\boldsymbol{f}-\boldsymbol{f}_h,\operatorname{curl}\boldsymbol{\varphi}_h)\\
		&\lesssim \|\boldsymbol{ f}-\boldsymbol{ f}_h\| \|\operatorname{curl}\boldsymbol{ \varphi}_h\| \lesssim h^s\|\boldsymbol{f}\|_s \|\operatorname{curl}\boldsymbol{ \varphi}_h\|,
	\end{align*}
	which, along with \eqref{disFre1}, completes the proof.
\end{proof}
\section{Numerical experiments}
In this section, we present some numerical results for the discrete complex \eqref{discomplex} with $r=k=1$ in three dimensions. 
We consider the quad-div problem \eqref{primalForm} on a unit cube $\Omega = [0,1]^3$, in which the source form $\boldsymbol{f}$ is given such that 
\begin{equation*}
	\boldsymbol{u}(x,y,z)=\nabla\left(x^{3} y^{3} z^{3}(x-1)^{3}(y-1)^{3}(z-1)^{3}\right).
\end{equation*}
\par
We solve the quad-div problem by using the C++ library Vem++ \cite{VemCode}.
Three kinds of meshes are considered as follows.\par
$\bullet $ Cube: structured meshes consisting of cubes; see Fig. \ref{mesh}(a);\par
$\bullet $ Voro: Voronoi tessellations optimized by the Lloyd algorithm; see Fig. \ref{mesh}(b);\par
$\bullet $ Random: Voronoi diagram of a point set randomly displayed inside the domain $\Omega$; see Fig. \ref{mesh}(c).
\begin{figure}[htbp]
	\centering
	\subfigure[Cube]{
		\begin{minipage}[t]{0.3\textwidth}
			\centering
			\includegraphics[scale=0.07]{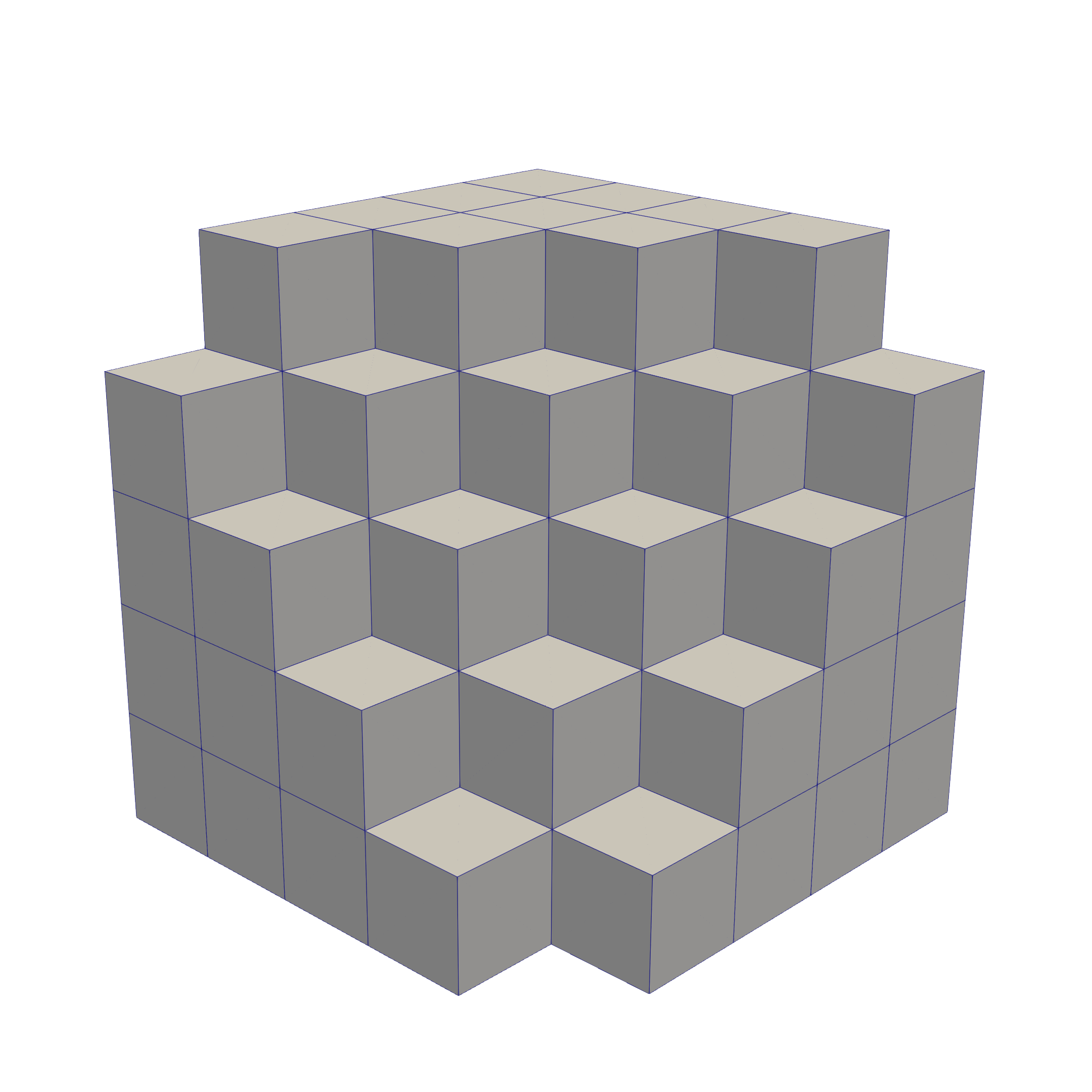}
		\end{minipage}
	}
	\hfill
	\subfigure[Voro]{
		\begin{minipage}[t]{0.3\textwidth}
			\centering
			\includegraphics[scale=0.07]{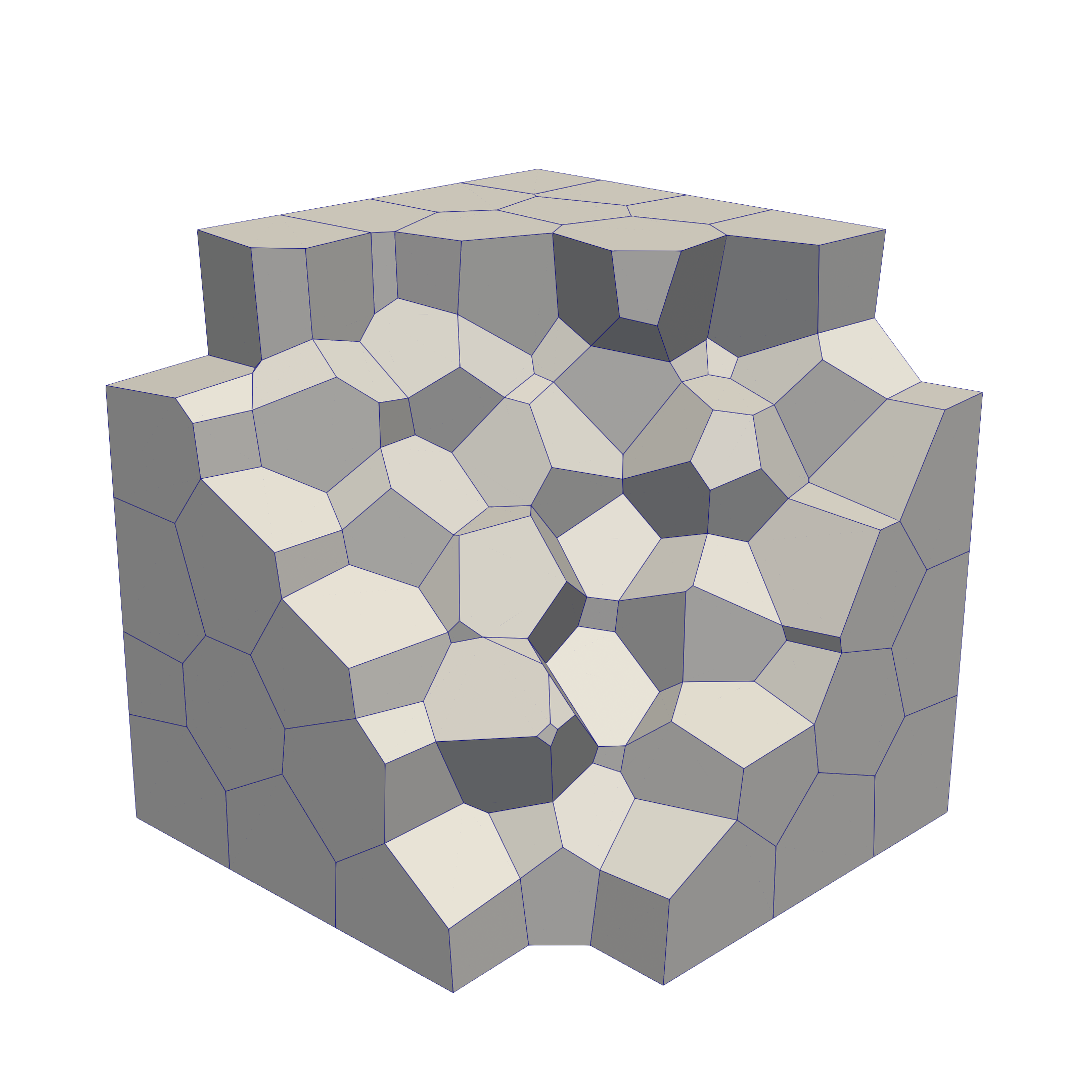}
		\end{minipage}
	}
	\hfill
	\subfigure[Random]{
		\begin{minipage}[t]{0.3\textwidth}
			\centering
			\includegraphics[scale=0.07]{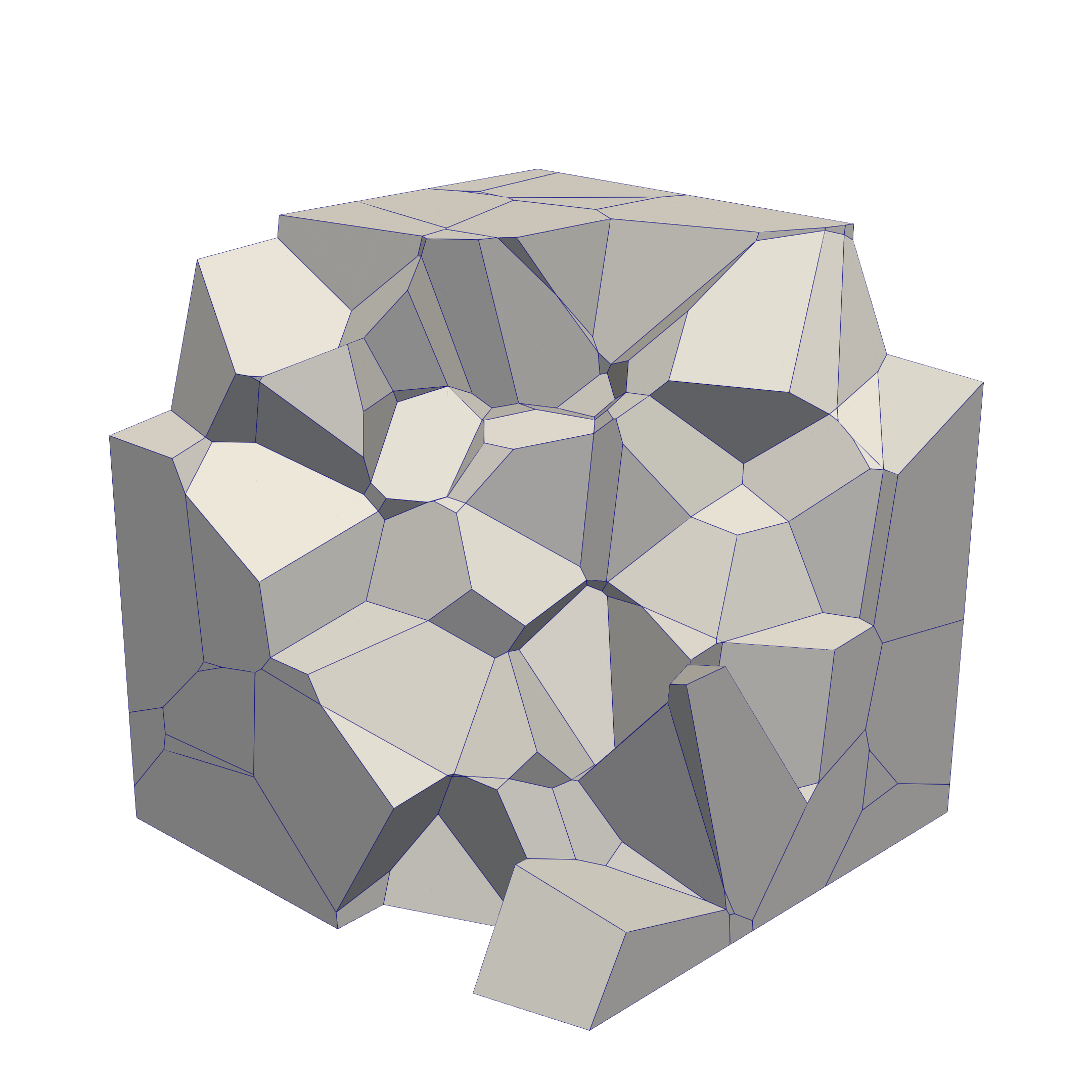}
		\end{minipage}
	}
	\centering
	\caption{Three representatives of the three families of meshes}
	\label{mesh}
\end{figure}
\par
For the computation of the virtual element solution  $\bu_h$, since the error $\bu-\bu_h$ is not directly measured for VEM, 
we instead  define  the discrete error  norm by
\begin{equation*}
	\begin{aligned}
		\|\boldsymbol{e}^{\boldsymbol{u}}\|_h&=\sqrt{b_h(\boldsymbol{I}_h\boldsymbol{u}-\boldsymbol{u}_h, \boldsymbol{I}_h\boldsymbol{u}-\boldsymbol{u}_h)}.
	\end{aligned}
\end{equation*}
In addition, we define two discrete errors associated with the multipliers:
\begin{equation*}
	\|\boldsymbol{e}^{\boldsymbol{\varphi}}\|_h := \sqrt{c_h(\boldsymbol{\varphi}_h, \boldsymbol{\varphi}_h)} \quad 
	\text{and}\quad
	|e^p|_{1,h} := \sqrt{c_h(\nabla p_h, \nabla p_h)}
\end{equation*}
In fact, from the coercivity, continuity, polynomial approximation, and virtual element interpolation estimates, 
the computable error here scales like the ``exact one''.
\par 
We present the convergence results for the lowest order of $r=k = 1$ on the three meshes in Tables~\ref{tab:Cube}, \ref{tab:Voro} and \ref{tab:Rand}. The mesh size $h$ used in our computations is defined as the average element diameter:
\begin{equation*}
	h:=\frac{1}{N}\sum_{K\in \mathcal{T}_h}h_K,
\end{equation*}
where $N$ is the total number of polyhedral elements $K$ in the mesh.
We observe that the convergence orders for the errors $\|\boldsymbol{e}^{\bu}\|_h$ and $\|\boldsymbol{e}^{\boldsymbol{\varphi}}\|_h$  are at least  $O(h)$, while $|e^p|_{1,h}$ remains at the machine precision level, which is consistent with the theoretical results. 
\begin{table}[ht]
	\centering
	\caption{Computed errors and rate of convergence with $r = k = 1$ on Cube meshes}
	\begin{tabular}{rrrrrrr}
		\toprule
		Ndof &	\( h \) & \( \|\boldsymbol{e}^{\boldsymbol{u}}\|_h \) & Rates & \( \|\boldsymbol{e}^{\boldsymbol{\varphi}}\|_h  \) & Rates &\(|e^{p}|_{1,h}\)\\
		\midrule
		790&	0.433012  & 5.146380E-02 &       & 5.990611E-02 &      &6.209179E-16     \\
		5130&	0.216506  & 1.434799E-02 & 1.8427   & 2.290767E-02 & 1.3869 & 4.631082E-14\\
		16094&	0.144337  & 8.368232E-03 & 1.3298  & 1.096638E-02 & 1.8168  & 5.722950E-16\\
		36754&	0.108253  & 6.033595E-03 &  1.1370  & 6.403414E-03 & 1.8701  &4.227228E-16 \\
		119450&	0.072168  & 3.982507E-03 & 1.0246 & 2.922138E-03 & 1.9348  & 2.049788E-14\\
		\bottomrule
	\end{tabular}
	\label{tab:Cube}
\end{table}

\begin{table}[ht]
	\centering
	\caption{Computed errors and rate of convergence with $r = k = 1$ on Voro meshes}
	\begin{tabular}{rrrrrrr}
		\toprule
		Ndof &	\( h \) & \( \|\boldsymbol{e}^{\boldsymbol{u}}\|_h \) & Rates & \( \|\boldsymbol{e}^{\boldsymbol{\varphi}}\|_h  \) & Rates &\(|e^{p}|_{1,h}\)\\
		\midrule
		740&	0.568225  & 7.235101E-02 &       & 1.715414E-01 &  & 1.883983E-15       \\
		3333&	0.318715  & 2.628734E-02 & 1.7510  & 1.287074E-01 &  0.4968  & 1.421457E-13\\
		28523&	0.153136  & 8.690384E-03 & 1.5101   & 5.034928E-02 &   1.2805      & 1.647550E-12\\
		57863&	0.120167  & 6.488065E-03 &  1.2054 & 3.497198E-02 & 1.5032&3.498923E-13  \\
		116983&	0.094650  & 5.037933E-03 & 1.0598 & 2.459276E-02 & 1.4751 & 3.632578E-11  \\
		\bottomrule
	\end{tabular}
	\label{tab:Voro}
\end{table}
\begin{table}[ht]
	\centering
	\caption{Computed errors and rate of convergence with $r = k = 1$ on Random meshes}
	\begin{tabular}{rrrrrrr}
		\toprule
		Ndof &	\( h \) & \( \|\boldsymbol{e}^{\boldsymbol{u}}\|_h \) & Rates & \( \|\boldsymbol{e}^{\boldsymbol{\varphi}}\|_h  \) & Rates &\(|e^{p}|_{1,h}\)\\
		\midrule
		725&	0.682575  & 1.108245E-01 &       & 4.083115E-01 &  & 3.648009E-14       \\
		3698&	0.397259  & 5.013813E-02 & 1.4653   & 2.980492E-01 &  0.5815 & 8.455677E-13\\
		32752&	0.188677  & 1.514060E-02 & 1.6251  & 8.681905E-02 & 1.6740 & 6.558233E-12\\
		
		\bottomrule
	\end{tabular}
	\label{tab:Rand}
\end{table}

\appendix
\section{The dual complex}\label{sec:appendixA}
Based on the generalized Helmholtz decomposition established in \cite{Chen2018}, we characterize the dual space of $\boldsymbol{V}_0(\Omega)$.
The analysis begins with the complex, defined for any $s \in \mathbb{R}$:
\begin{equation}\label{Realscomplex}
	\mathbb{R} \stackrel{\subset}{\longrightarrow} H^{s+3}(\Omega)\stackrel{\nabla}{\longrightarrow} \boldsymbol{H}^{s+2}(\Omega) \stackrel{\curl }{\longrightarrow} \boldsymbol{H}^{s+1}(\Omega)  \stackrel{\div }{\longrightarrow} H^{s}(\Omega)\longrightarrow 0.
\end{equation}    
As shown in \cite{Costabel2010}, this complex is exact on bounded domains that are starlike with respect to a ball.
Consider the orthogonal complement $\boldsymbol{X}(\Omega)$ of $\boldsymbol{H}_0(\curl;\Omega)$ within the complex \eqref{ContiComplex}. The restriction yields a short exact sequence:
\begin{equation*}
	0 \stackrel{}{\longrightarrow}\boldsymbol{X}(\Omega)\stackrel{\nabla \times}{\longrightarrow}\boldsymbol{V}_0(\Omega) \stackrel{\nabla \cdot }{\longrightarrow} H_0^1(\Omega).
\end{equation*}
Furthermore, by \cite[Remark 2.15]{Pauly2016}, the dual complex
\begin{equation}\label{DualShortComplex1}
	H^{-1}(\Omega)\stackrel{\nabla}{\longrightarrow} \boldsymbol{V}'(\Omega)\stackrel{\nabla \times}{\longrightarrow}\boldsymbol{X}'(\Omega)\longrightarrow 0
\end{equation}
is also exact.
Define the space
\begin{equation*}
	\boldsymbol{H}^{-2}(\curl;\Omega):=\left\{\v \in \boldsymbol{H}^{-2}(\Omega): \curl \v \in \boldsymbol{H}^{-1}(\Omega) \right\},
\end{equation*}
equipped with the norm
\begin{equation*}
	\|\v\|^2_{\boldsymbol{H}^{-2}(\curl;\Omega)}:=\|\v\|^2_{-2}+ \|\curl \v\|_{-1}^2.
\end{equation*}
\begin{lemma}
	The complex
	\begin{equation}\label{DualShortComplex2}
		H^{-1}(\Omega)\stackrel{\nabla }{\longrightarrow}\boldsymbol{H}^{-2}(\curl;\Omega) \stackrel{\curl }{\longrightarrow} \boldsymbol{X}'(\Omega) \longrightarrow 0
	\end{equation}
	is exact.
\end{lemma}
\begin{proof}
	Substituting $s = -4$ into the complex \eqref{Realscomplex} yields the identity $\boldsymbol{H}^{-2}(\Omega) \cap \ker(\curl) = \nabla H^{-1}(\Omega)$. Observe that $\boldsymbol{H}^{-2}(\curl;\Omega)\cap \ker(\curl) = \boldsymbol{H}^{-2}(\Omega) \cap \ker(\curl)$. This identity immediately establishes the exactness of the former complex.
	\par
	Moreover, by the Friedrichs inequality \eqref{Frieq2}, the operator $\curl\curl: \boldsymbol{X}(\Omega) \to \boldsymbol{X}'(\Omega)$ is an isomorphism. Hence, $\boldsymbol{X}'(\Omega)=\curl \curl \boldsymbol{X}(\Omega)$, which is contained in $\curl \boldsymbol{H}^{-2}(\curl;\Omega)$. On the other hand, the definition gives $\curl \boldsymbol{H}^{-2}(\curl;\Omega) \subset \boldsymbol{H}^{-1}(\Omega)$. We thus have the inclusions
	$$\curl \boldsymbol{H}^{-2}(\curl;\Omega) \subset\boldsymbol{H}^{-1}(\div;\Omega)=\boldsymbol{H}_0(\curl;\Omega)'\subset \boldsymbol{X}'(\Omega),$$
	where the space $$\boldsymbol{H}^{-1}(\div;\Omega):=\{ \v \in \boldsymbol{H}^{-1}(\Omega): \div \v \in H^{-1}(\Omega)\}$$ is  the dual of $\boldsymbol{H}_0(\curl;\Omega)$ as defined in \cite{Chen2018}. Combining the two inclusions yields the required identity:
	\begin{equation*}
		\boldsymbol{X}'(\Omega)=\curl \boldsymbol{H}^{-2}(\curl;\Omega).
	\end{equation*}
\end{proof}
We have the following commutative diagram:
\[
\begin{tikzcd}
	H^{-1}(\Omega) \arrow[r,"{\nabla }"] 
	& \boldsymbol{H}^{-2}(\curl;\Omega) \arrow[r,"{\curl}"] 
	& \boldsymbol{X}'(\Omega) \arrow[r] & 0 \\
	& \boldsymbol{V}_0(\Omega) \arrow[u,"I"] 
	& \boldsymbol{X}(\Omega) \arrow[l,"{\curl }"'] \arrow[u,"{\curl\curl}" ].
\end{tikzcd}
\]
Combining Corollary 2.5 in \cite{Chen2018} with exact complexes \eqref{DualShortComplex1} and \eqref{DualShortComplex2} gives Lemma \ref{DecLemma}:
\begin{equation*}
	\boldsymbol{V}'(\Omega)= \boldsymbol{H}^{-2}(\curl;\Omega)=\nabla H^{-1}(\Omega)\oplus \curl\boldsymbol{X}(\Omega) = \nabla H^{-1}(\Omega)\oplus \curl \boldsymbol{H}_0(\curl;\Omega).
\end{equation*}

	\bibliographystyle{amsplain}
	\bibliography{4div_reference}

\end{document}